\numberwithin{equation}{section}
\newcommand{\R}{\mathbb{R}}
\newcommand{\N}{\mathbb{N}}
\newcommand{\Z}{\mathbb{Z}}
\newcommand{\cA}{\mathcal{A}}
\newcommand{\cP}{\mathcal{P}}
\newcommand{\cQ}{\mathcal{Q}}
\newcommand{\eps}{\varepsilon}
\renewcommand{\div}{\operatorname{div}}
\newcommand{\loc}{\mathrm{loc}}
\newcommand{\Qper}{{Q\text{-per}}}					
\newcommand{\tQper}{{Q\text{-per}}}
\newtheorem{theorem}{Theorem}[section]
\newtheorem{lemma}[theorem]{Lemma}
\newtheorem{proposition}[theorem]{Proposition}
\newtheorem{remark}[theorem]{Remark}
\newtheorem{example}[theorem]{Example}
\begin{document}
	\title{\Large \bf Homogenization of magnetoelastic materials with rigid magnetic inclusions at small strains
	}

	\author{
		Raffaele Grande 
		\thanks{The Czech Academy of Sciences, Institute of Information Theory and Automation, Prague, Czech Republic} \and
		Stefan Kr\"{o}mer
		\thanks{The Czech Academy of Sciences, Institute of Information Theory and Automation, Prague, Czech Republic} \and
		Martin Kru\v{z}\'{i}k
		\thanks{The Czech Academy of Sciences, Institute of Information Theory and Automation, Prague, Czech Republic} \and
		Giuseppe Tomassetti
		\thanks{Università di Roma Tre, Rome, Italy}
	}

	\maketitle
	\begin{abstract}
		We investigate a homogenization problem  for  a linearly elastic magnetic material that incorporates  elastically  rigid magnetic inclusions firmly bonded to the matrix. 
		By considering a periodic arrangement of this material, we identify an effective magnetoelastic energy, obtained by homogenization when the period approaches zero. 
		 For comparison, we also briefly discuss alternative, essentially equivalent magnetic models naturally linked by 		a Legendre-Fenchel transform of magnetic energy density where the elastic deformation enters as a parameter. 
	\end{abstract}

\section{Introduction}
Magnetoelasticity is a physical phenomenon characterized by the mutual interaction between magnetic fields and mechanical deformation in certain materials. When a magnetic field is applied to a magnetoelastic material, it can induce mechanical strain, causing the material to change shape or dimensions. Conversely, mechanical deformation of the material, such as stretching, compressing, or bending, can alter its magnetic properties \cite{Dorfmann2014}. Magnetoelastic coupling has been harnessed in a range of advanced applications, from sensors \cite{Calkins2007}, actuators \cite{Apicella2019}, and energy harvesting devices \cite{Deng2017}. 

The magnetoelastic effect was first reported by J. P. Joule in 1842, when he observed that certain ferromagnetic materials, such as iron and nickel, undergo changes in shape or dimensions when exposed to a magnetic field \cite{Joule1847}. This effect, now known as magnetostriction \cite{Lacheisserie2002}, arises due to the influence of the magnetic field on the alignment of magnetic domains within the material, leading to either expansion or contraction. While magnetostriction is observed in many ferromagnetic materials, it is particularly pronounced in certain rare-earth iron compounds, such as Terfenol-D and Galfenol, which exhibit significantly higher magnetostrictive strains \cite{Behera2022}. These advancements have spurred significant mathematical research to understand the role of complex microstructural arrangements in these giant magnetostrictive materials \cite{James1993,Kinderlehrer1996,DeSimone1997,DeSimone2002}, primarily grounded in Brown's micromagnetic model \cite{Brown1963,Brown1966}. 

As an alternative to metal-based magnetostriction, it was observed in \cite{Rigbi1983}  that elastomers filled with ferromagnetic inclusions exhibit magnetoelastic behavior. By their similarity with magnetorheological fluids \cite{Rossi2018}, these composite materials have been referred to as \emph{magnetorheological elastomers} \cite{Ginder1999}.  A micromechanical model describing the shear deformation response of these composites was proposed in \cite{Jolly1996}. Additionally, a microscopic model in the context of statics and the geometrically-linear setting was introduced in \cite{Borcea2001}. A nonlinear macroscopic continuum model, building on previous work by \cite{Kovetz2000}, where a general formulation of the balance laws for an electromagnetic continuum was developed, was later proposed in \cite{Kankanala2004}.

The model described  in \cite{Kankanala2004} is based on the magnetization as the primary variable, and neglects exchange energy, on the basis of the fact that the scale of interest for magnetic elastomers is larger than the scale at which magnetic microstructures are observed. When exchange energy is neglected, it is possible to adopt magnetic induction or magnetic field as the primary variable on which the free energy depends, leading to several alternative formulations, both non-variational \cite{Dorfmann2003,Dorfmann2004,Steigmann2007,Ericksen2007}  and variational \cite{BuDoOg2008}. The equivalence of all these formulations has been recently addressed in \cite{sharmaVariationalPrinciplesNonlinear2021a}. To the best of our knowledge, the only mathematical work addressing the well-posedness of these formulations is contained in \cite{Silhavy2018,Silhavy2019}, where a notion of   polyconvexity  in the context of   electro/magnetoelasticity  has been introduced, and existence theorems have been established for the variational formulation based on magnetic induction as the primary variable. 

Constitutive models for magnetic elastomers have been proposed in \cite{bustamanteTransverselyIsotropicNonlinear2010a,dorfmannConstitutiveModellingMagnetosensitive2004a}.  In order to better exploit these models; it is important to understand how the microscopic structure of the materials affects their macroscopic behavior. For this reason, various microscopic micromechanical models have been proposed.  As far as theoretical work is concerned, in addition to the paper \cite{Borcea2001}, a variational homogenization framework based on the formulations proposed in \cite{Dorfmann2004,BuDoOg2008}, based on magnetic induction, has been carried out in \cite{PonteCastaneda2011}. For the same formulation, generalized convexity notions  have been proposed in \cite{furerHomogenizationMacroscopicInstabilities2022}, alongside an empirical homogenization formula inspired by the rigorous results in \cite{Muller1987}. As far as numerical homogenization is concerned, we mention the framework proposed in \cite{danasEffectiveResponseClassical2017a}, and work using the XFEM approach has been carried out in \cite{spielerXFEMModelingHomogenization2013}. 

The majority of the works cited above adopt a variational formulation where magnetic induction is the primary magnetic variable. One of the advantages of this  approach   is that  a  uniquely-defined  stationary point  is the energy minimizer. In the context of homogenization, this property is useful because it opens the way to using the tools of $\Gamma$--convergence to rigorously obtain a homogenization formula for periodic material structures.   For a more detailed comparison of models see Appendix~\ref{sec:AppA}.

In the present paper we consider a body $\Omega$ made of a soft and a rigid magnetically responsive material arranged periodically by repetition of a cell $\varepsilon Q$ of characteristic size $\varepsilon$, obtained by scaling of a reference cell $Q$ of unit size. The energy density in the reference  unit  cell $Q$ is 
\begin{equation}
	\begin{aligned}
		f(z,G,B) &:=(1-\chi_M(z))f_{soft}(G,B)+\chi_M(z) f_{rigid}(G,B),\qquad z\in  Q, \\
	\end{aligned}
\end{equation}
where $G=\nabla u$ is the displacement gradient, $B$ is the magnetic induction, and $\chi_M$ is the characteristic function of a set $M$  that defines the region occupied by the rigid material. We assume that $M$ does not touch the boundary of the unit cell. Thus, the rigid material is an inclusion contained in a soft matrix. 

To simplify the analysis and avoid technical complications, which could be resolved but would make the exposition cumbersome, we exclude the rigid inclusions from the cells that intersect the boundary. Then, the functional that governs the equilibrium configurations is given by the sum of the magneto-elastic energy:
\begin{equation}\label{functional}
\mathcal{E}_{\varepsilon}(u, B)=\int_{\Omega_{\varepsilon}} f\left(\frac{x}{\varepsilon}, \nabla u, B\right) dx+\int_{\Omega \backslash \Omega_{\varepsilon}} f_{soft}(\nabla u, B) dx+\int_{\mathbb{R}^3\setminus \Omega} \frac{1}{2 \mu_0}|B|^2 dx,
\end{equation}
and a linear perturbation describing the applied magnetic field. Here, $\Omega_\varepsilon$ denotes the union of all cells fully contained within $\Omega$, while $\Omega\setminus\Omega_\varepsilon$ represents a thin boundary layer adjacent to $\partial\Omega$. The functional depends on the mechanical displacements $u\in W^{1,2}(\Omega;\mathbb R^3)$ and magnetic induction $B\in L^2(\mathbb R^3;\mathbb R^3)$ such that $u=0$ on $\partial\Omega$ and $\operatorname{div}B=0$ in the sense of distributions.

The energy functional \eqref{functional} is an approximation to the 
small-strain regime, 
where the displacement gradient $G=\nabla u$ is a small perturbation of the deformation gradient $F=I+\nabla u$ around the identity, and the magnetic induction in the reference and current configuration can be described by the same  field $B$. The functional can be obtained by a formal approximation of the actual functional that governs equilibrium in the large-displacement, large-strain regime, which is not easy to study rigorously. Some related results have been obtained in context of micromagnetics \cite{BreKru23a,BreDaKru23a}, but homogenization seems to be presently out of reach in the large strain case even without magnetic effects. 

Considering the situation where the cell size is considerably smaller than the specimen's dimensions, we wish to capture the macroscopic attributes of this physical system in the limit as $\varepsilon$ tends to zero. We achieve this goal by employing $\Gamma$--convergence \cite{Dal12IGC} to identify the limit  functional of \eqref{functional}. This area of study has remained a dynamic realm of mathematical exploration for  decades. We refer to \cite{BakPan12} or \cite{CioDon99,Dal12IGC} for an introduction to the subject. 
In context of micromagnetics without elasticity, homogenization has been studied in \cite{Pisante}. 
Homogenization of materials featuring periodic inclusions with vastly distinct properties compared to the matrix material, called {\it high-contrast materials} has garnered significant attention, too, see  \cite{BraiGa95a, CheChe12TSGIF, CheCheNe17HCHNE, DaKrPa22}. This surge in interest stems from the remarkable versatility and applicability of such composite materials. 

The main result of this paper is that the $\Gamma$--limit of the functional $\mathcal{E}_\varepsilon$ as $\varepsilon$ tends to zero, is 
\begin{equation}\label{functional-hom}
	\mathcal{E}_{hom}(u, B)=\int_{\Omega} f_{hom}\left(\nabla u, B\right) dx+\int_{\mathbb{R}^3\setminus \Omega} \frac{1}{2 \mu_0}|B|^2 dx,
\end{equation}
 where the homogenized energy density $f_{hom}$ is given by a multi-cell formula (see \eqref{eq:deffhom} below).

		
The paper is organized as follows. In Section \ref{sec:proto} we formulate the problem and we provide our $\Gamma$--convergence result. In Section \ref{sec:examples} we offer a few examples that fit within the assumption of the convergence result. In Section \ref{sec:hom} we give the proof of the convergence result.  The Appendix discusses altenative magentic models and passage between them based on \cite{BuDoOg2008}, including some information on parameter-dependent Legendre-Fenchel transforms that are relevant for the mathematical properties of the transformed model.

\section{Problem formulation and main result}\label{sec:proto}
\subsection{Definitions}
	Let $Q=(0,1)^3$ be the unit cube and let $M\subset \R^3$ be an open, $Q$-periodic set
	representing elastically rigid inclusions of fixed size. 
	The characteristic function of $M$, which {is} denoted by $\chi_M:\R^3\to \{0,1\}$, \color{black}is therefore also $Q$-periodic. 
We require that the inclusions are well-contained in the unit cell in the sense that
  \begin{equation}
     \overline{M\cap Q}\subset Q {.} \label{Mwellsep}
  \end{equation}
Given the microscopic length scale parameter $\eps>0$
	and a Lipschitz domain $\Omega\subset\R^3$ representing the reference configuration 
	of the body under study, the rigid inclusions are described by the set
	\begin{equation}\label{Meps}
		\begin{aligned}
			M_\eps=\bigcup_{z\in \mathcal{Z}_\eps(\Omega)} \eps(z+ M\cap Q), 
			\quad\text{where}~\mathcal{Z}_\eps:=\{z\in  \Z^3\mid \eps(z+ M\cap Q)\subset \Omega \}.&
		\end{aligned}
	\end{equation}
	Any set of the form $\eps(z+ Q)$ with a $z\in \Z^3$ is called an $\eps$-cell.
	In that sense, $z\in \mathcal{Z}_\eps(\Omega)$ characterizes interior $\eps$-cells, i.e., those that are fully contained in $\Omega$. Their union is defined as
	\begin{equation}\label{Omegaeps}
		\begin{aligned}
			\Omega_\eps:=\bigcup_{z\in \mathcal{Z}_\eps(\Omega)} \eps(z+ Q).&
		\end{aligned}
	\end{equation} 
 Slightly more general than \eqref{functional},
we  consider a class of functionals of the form 
	\begin{equation}\label{Eeps}
		E_{\varepsilon}(u, B)=\int_{\Omega_{\varepsilon}} f\left(\frac{x}{\varepsilon}, \nabla u, B\right) d x+\int_{\Omega \backslash \Omega_{\varepsilon}} f_b(\nabla u, B) d x+\int_{\mathbb{R}^3\setminus \Omega} f_{e x t}(x, B) d x,
	\end{equation}
	where the pair $(u,B)$ -- displacement and magnetic induction -- belongs to the admissible set
	\begin{equation}\label{eq:UpartialOmega}
		\mathcal{U}_{\partial \Omega}:=\left\{(u, B) \in W^{1,2}\left(\Omega ; \mathbb{R}^3\right) \times L_{\text{div}}^2\left(\Omega ; \mathbb{R}^{3}\right) \mid u=0 \text { on } \partial \Omega \text { (in the sense of traces)}\right\}
	\end{equation}
where
\begin{equation}
	L_{\text {div}}^2\left(\mathbb{R}^3;\mathbb{R}^{3}\right)=\left\{B \in L^2\left(\mathbb{R}^3 ; \mathbb{R}^{3}\right) \mid \operatorname{div} B=0 \text { in the sense of distributions}\right\} .
\end{equation}		
\begin{remark}
			By requiring that inclusions only occur in $M_\eps$, we assume that whenever an $\eps$-cell intersects $\partial\Omega$, it does not contain a rigid inclusion and is instead fully filled by the elastically soft material represented by the energy density $f_b$ below. This effectively leads to a boundary layer on $\Omega\setminus M_\eps$ without inclusions, a neighborhood of $\partial\Omega$ of the order of $\eps$. This assumption avoids technical issues that would arise if the boundary $\partial\Omega$ cuts an inclusion in an $\eps$-cell into several pieces, possibly rough and not well separated from $\partial\Omega$. To handle this kind of situation, we would need a much more subtle	generalization of Lemma~\ref{lem:wlim-approx} below which is not obvious.
			However, in our proof, we will see that in any case, such an $\eps$-boundary layer of soft material is energetically negligible in the limit as $\eps\to 0$. In view of this, we rather avoid the issue altogether.
		\end{remark}

To prove our main result, we will require some general assumptions on the integrands in \eqref{Eeps}. The examples in the next section are consistent with these assumptions.

Given a matrix $G\in \R^{3\times 3}$ we denote by $\operatorname{sym}G:=\frac{1}{2}( G^\top+ G)$ 
		and $\operatorname{skw}(G):=\frac{1}{2}( G^\top- G)$, respectively, its symmetric and skew-symmetric part. We assume that the function $f$ appearing as integrand in \eqref{Eeps}, satisfies:
		\begin{subequations}
		\begin{align} 
	& \begin{aligned}
		&f:\R^3\times \R^{3\times 3}\times \R^3\to \R\cup\{+\infty\},\\
		&z\mapsto f(z, G, B)~~\text{is  $Q$-periodic and  measurable} \text{ for all }G\in\mathbb R^{3\times 3}, B\in\mathbb R^3;
	\end{aligned}\label{f0}
	\\
	&\begin{cases} 
		f(z, G, B)=f(z, \operatorname{sym}(G), B)~~\text{for all $(z,G,B)$} & \text{if $z\notin M$}, \\
		\frac{1}{C}(| \operatorname{sym}(G)|^2+|B|^2)-C \leq f(z, G, B) \leq C(| \operatorname{sym}(G)|^2+|B|^2)+C,
		& \text{if $z\notin M$};
	\end{cases}
	\label{f1}
	\\ 
	& \begin{aligned} 
		& \begin{cases} 
			\frac{1}{C}|B|^2  \leq  f(z, G, B)\leq C(|G|^2+|B|^2)+C 	& \text{if $\operatorname{sym}(G)=0$ and $z\in M$,}\\
			f(z, G, B)=+\infty & \text{if $\operatorname{sym}(G)\neq 0$ and $z\in M$;}
		\end{cases}
	\end{aligned}\label{f2}
	\\ 
	& \begin{aligned} &|f(z, G_0, B_0) - f(z, G_{1}, B_{1})| \\
		&\leq C 
		(|G_0|+| G_{1}| + | B_0|+| B_{1}|)(| G_0 -  G_{1}| + | B_0 -  B_{1}|) \quad \text{if $z\notin M$.}
	\end{aligned}\label{f3} \\
	& \begin{aligned} &|f(z, \operatorname{skw}(G_0), B_0) - f(z, \operatorname{skw}(G_{1}), B_{1})| \\
		&\leq C 
		(|\operatorname{skw}(G_0)|+|\operatorname{skw}(G_{1})| + | B_0|+| B_{1}|)(|\operatorname{skw}(G_0) -  \operatorname{skw}(G_{1})| + | B_0 -  B_{1}|) \quad \text{if $z\in M$.}
	\end{aligned}\label{f4}
\end{align}
\end{subequations}
		For the boundary layer energy density $f_b:\R^{3\times 3}\times \R^3\to \R$, $(G,B)\mapsto f(G, B)$ (independent of $z$), we will assume the 
		$f_b$ satisfies \eqref{f1} and \eqref{f3} in place of $f$ for the ``soft material'' case $z\notin M$.
		As to $f_{ext}$, 
		the associated functional $B\mapsto \int_{\R^3\setminus\Omega} f_{ext}(x,B)\,dx$ 
		is defined on $L^2_{\div}(\R^3;\R^3)$
		and we will only need that
		\begin{align} 
			&\begin{aligned}
				&\int_{\R^3\setminus\Omega} f_{ext}(x,B)\,dx\geq \frac{1}{C}\int_{\R^3\setminus \Omega} |B|^2\,dx-C~~~\text{for all }B\in L^2(\R^3;\R^3); 
			\end{aligned}	\label{fext0}\tag{$f_{ext}$:0}
			\\
			&\begin{aligned}
				B\mapsto \int_{\R^3\setminus\Omega} f_{ext}(x,B)\,dx~~~\text{is continuous on $L^2(\R^3;\R^3)$;} &
			\end{aligned}	\label{fext1}\tag{$f_{ext}$:1}
			\\
			&\begin{aligned}
				B\mapsto \int_{\R^3\setminus\Omega} f_{ext}(x,B)\,dx~~~\text{is sequentially weakly  lower semicontinuous on $L^2(\R^3;\R^3)$}.&
			\end{aligned}	\label{fext2}\tag{$f_{ext}$:2}
		\end{align}
		\begin{remark}
			For instance, \eqref{fext1} can be obtained as a consequence of a growth condition and continuity of the integrand,
			and \eqref{fext2} clearly holds if $B\mapsto \int_{\R^3\setminus\Omega} f_{ext}(x,B)\,dx$ is 
			convex and finite valued. In particular, both is the case for our prototype examples where
			$
			f_{ext}(x,B)=\frac{1} {2\mu_0} B\cdot B
			$
with some $\mu_0>0$. The assumption \eqref{fext0} of course also holds for this example.
		\end{remark} 
		\begin{remark}\label{rem:equi-coercive}
			Coercivity of $E_\eps$ follows from \eqref{fext0} and the lower bounds in \eqref{f1} and \eqref{f2}, together with Korn's inequality (see \cite{Cia1}). In fact, we even get $2$-equi-coercivity, i.e.,
			\[
			E_\eps(u,B)\geq c \|u\|^2_{W^{1,2}(\Omega;\R^3)}+c \|B\|^2_{L^{2}(\Omega;\R^3)}-\frac{1}{c}~~~\text{for all $(u,B)\in \mathcal U_{\partial\Omega}$},
			\]
			with a constant $c>0$ independent of $u$, $B$ and $\eps$.
		\end{remark}

		\subsection{Main result}
		We will see that the homogenized functional corresponding to \eqref{Eeps} is given by
		\begin{equation}\label{Ehom}
			E_{hom}(u, B)=\int_{\Omega} f_{hom}\left(\nabla u, B\right)\, dx
			+\int_{\R^3\setminus \Omega} f_{ext,hom}(x,B)\, dx.
		\end{equation}
		Here, the homogenized energy densities $f_{hom}$ and $f_{hom,ext}$ are
		\begin{align} \label{eq:deffhom}
			\begin{aligned}
				&f_{ext,hom}=f_{ext},\\
				&f_{hom}(G,B)=\inf_{k, \varphi, \beta} 
				\int_{Q}  f\Big(kz,G+\nabla \varphi(z),B+\beta(z)\Big) \,dz,
			\end{aligned}
		\end{align}
		where the infimum is taken over all 
		\begin{align} \label{eq:fhom-infclass}
			\begin{aligned}
				&k\in\mathbb{N},~~
				\varphi\in W_0^{1,2}(Q;\mathbb{R}^3),
\\
				&\beta \in L^2_\Qper(\R^3;\mathbb{R}^3)~~\text{s.t.~$\int_Q \beta(z)\, dz=0$ and $\div \beta=0$ on $\mathbb{R}^3$}.
			\end{aligned}
		\end{align} 
		Notice that in order to be a relevant competitor for the infimum in \eqref{eq:deffhom}, $\varphi$ must also satisfy 
		\[
			\text{$\operatorname{sym}(G+\nabla \varphi(z))=0$ whenever $kz\in M$,}
		\]
		because otherwise, the integral becomes infinite by \eqref{f2}.

		\begin{remark}
		The homogenized density $f_{hom}$ is here defined by a multi-cell formula (i.e., we might have to choose $k>1$ in \eqref{eq:deffhom}). However, if $f$ is convex in $(G,B)$, it can be shown that the single cell formula suffices (i.e., $k=1$ is optimal), essentially as in \cite[Lemma 4.1]{Muller1987}. For for linearly elastic energy 
		densities, convexity in $G$ (the displacement gradient) is natural, but joint convexity in $(G,B)$
		heavily depends on the elastomagnetic coupling.
		\end{remark}

		The main result of the paper is the following. \color{black}
		\begin{theorem}\label{thm:ConvLin}
			Let $\Omega\subset \R^3$ be a bounded Lipschitz domain, 
			let $Q=(0,1)^3$ the unit cube and let $M_\eps\subset \Omega_\eps\subset \Omega$ given by \eqref{Meps} and \eqref{Omegaeps}, respectively, where
			$M\subset \R^3$ is a $Q$-periodic open set satisfying \eqref{Mwellsep}. We consider the functionals
			$E_\eps$ and $E_{hom}$ be given by \eqref{Eeps} and \eqref{Ehom}, with integrands 
			$f$, $f_{ext}$ and $f_{hom}$, where $f_{hom}$ is defined in \eqref{eq:deffhom},
			$f$ satifies \eqref{f0}--\eqref{f4}, $f_b$ satisfies \eqref{f1} and \eqref{f3} (for the case $z\notin M$) and $f_{ext}$ satisfies \eqref{fext0}--\eqref{fext2}.
			Then $E_\eps$ $\Gamma$-converges to $E_{\rm hom}$ as $\eps\to 0$, i.e., 
			the following holds for each $( u, B)\in  \mathcal U_{\partial\Omega}\subset W^{1,2}(\Omega;\R^3)\times L^2_{\div}(\R^3;\R^3)$:
			\begin{enumerate}
				\item[(i)] ($\Gamma-\liminf$ inequality) For all  sequences such that 
				$( u_\eps, B_\eps)\in \mathcal U_{\partial\Omega}$  such that $( u_\eps, B_\eps)\rightharpoonup (u,B)$ weakly in 
				$W^{1,2}(\R^3;\R^3)\times L^2(\R^3;\R^3)$,
				\begin{align}\label{eq:thmLiminf}
					\liminf_{\eps\to 0} E_\eps( u_\eps, B_\eps)\geq E_{hom}( u, B);				
					\end{align}
				\item[(ii)] (recovery sequence) There exists a sequence $( u_\eps, B_\eps)\in \mathcal U_{\partial\Omega}$ 
				such that $( u_\eps, B_\eps)\rightharpoonup( u, B)$ weakly in 
				$W^{1,2}(\R^3;\R^3)\times L^2(\R^3;\R^3)$ and
				\begin{align}\label{eq:thmLimsup}
					\lim_{\eps\to 0} E_\eps( u_\eps, B_\eps)= E_{hom}( u, B).
				\end{align}
				In addition, $ B_\eps\to  B$ strongly in $L^2(\R^3\setminus\Omega;\R^3)$.
			\end{enumerate}
		\end{theorem}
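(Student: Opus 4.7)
The plan is to prove parts (i) and (ii) separately, following the standard $\Gamma$-convergence approach for periodic integrands, adapted to the rigid-inclusion constraint \eqref{f2} and the global divergence-free condition on $B$ in $\R^3$. The exterior contribution is handled almost automatically by \eqref{fext1} and \eqref{fext2}: continuity along strongly $L^2$-convergent sequences and weak lower semicontinuity cover, respectively, its upper and lower limits. Since $f_{ext,hom}=f_{ext}$, no homogenization is performed outside $\Omega$. Equi-coercivity (Remark~\ref{rem:equi-coercive}) lets us restrict to sequences with $(u_\eps,B_\eps)\rightharpoonup(u,B)$ weakly in $W^{1,2}\times L^2$.

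For the liminf inequality, I would first discard the boundary-layer contribution: by \eqref{f1} applied to $f_b$, the integrand is bounded below by $-C$, so the integral over the $O(\eps)$-thin layer $\Omega\setminus\Omega_\eps$ vanishes in the limit. On $\Omega_\eps$ I would localize cell by cell. For each $z\in\mathcal{Z}_\eps(\Omega)$, let $G_{\eps,z}$ and $B_{\eps,z}$ denote the averages of $\nabla u_\eps$ and $B_\eps$ over the $\eps$-cell $\eps(z+Q)$, and consider the rescaled fluctuations of $u_\eps$ and $B_\eps$ inside that cell. After a periodic extension over a block of $k^3$ adjacent cells, preserving the divergence-free character of the magnetic fluctuation, these fluctuations become admissible competitors in the multi-cell formula \eqref{eq:deffhom} for $f_{hom}(G_{\eps,z},B_{\eps,z})$. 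The rigid constraint on $M_\eps$ is enforced automatically by the infinite value of $f$ prescribed in \eqref{f2}. Summing the cellwise lower bounds and identifying the resulting Riemann sum with $\int_\Omega f_{hom}(\nabla u,B)\,dx$ then follows from the Lebesgue differentiation theorem combined with a weak-convergence approximation of the type of Lemma~\ref{lem:wlim-approx}.

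For the recovery sequence, I would reduce to the case of a single affine target $u(x)=Gx$, $B(x)=B_0$ by density of simple functions and a diagonal argument. Given such a target, pick $(k,\varphi,\beta)$ almost optimal in \eqref{eq:deffhom}. Rescale the corrector to a $(kQ)$-periodic pair $\tilde\varphi(y)=k\varphi(y/k)$, $\tilde\beta(y)=\beta(y/k)$ on $\R^3$, with zero mean and $\tilde\beta$ divergence-free, and set
\[
u_\eps(x)=Gx+\eps\tilde\varphi(x/\eps),\qquad B_\eps(x)=B_0+\tilde\beta(x/\eps)\quad\text{on }\Omega.
\]
These weakly converge to the target, respect the rigid constraint on $M_\eps$ by admissibility of $\varphi$ together with \eqref{f2}, and by standard periodic averaging the energy density averages exactly to $f_{hom}(G,B_0)$. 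Outside $\Omega$ I set $B_\eps=B$ directly, which yields strong $L^2$ convergence on $\R^3\setminus\Omega$. To enforce $u_\eps|_{\partial\Omega}=0$ and continuity of the normal trace of $B_\eps$ across $\partial\Omega$, I would multiply $\tilde\varphi$ and $\tilde\beta$ by a cutoff with transition width $\sim\sqrt{\eps}$ supported inside $\Omega\setminus\Omega_\eps$ and close $B_\eps$ by a Bogovski\u{\i}-type divergence corrector in the same zone; by \eqref{f3} the resulting energy error is $o(1)$. Since the transition layer contains no rigid inclusions, these modifications never violate \eqref{f2}. Near-optimality of $(k,\varphi,\beta)$ together with part (i) then deliver the desired limit equality.

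The main obstacle is the simultaneous preservation of three essentially incompatible constraints of the admissible class: the Dirichlet condition $u_\eps|_{\partial\Omega}=0$, the global divergence-free condition $B_\eps\in L^2_{\mathrm{div}}(\R^3;\R^3)$, and the pointwise rigid constraint $\mathrm{sym}(\nabla u_\eps)=0$ on $M_\eps$. The built-in soft boundary layer $\Omega\setminus\Omega_\eps$ together with the well-separation hypothesis \eqref{Mwellsep} provides exactly enough room to localize the cutoff and Bogovski\u{\i} corrections to an inclusion-free region, which is precisely the technical simplification acknowledged in the remark following \eqref{eq:UpartialOmega}. A secondary subtlety in the liminf part is that $f_{hom}$ is defined by a non-convex multi-cell infimum, so the cellwise comparison must exploit the full flexibility of \eqref{eq:deffhom} rather than rely on a single-cell weak lower semicontinuity argument.
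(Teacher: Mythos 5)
Your overall plan — reduce to affine/constant $(u,B)$, compare with correctors from the multi-cell formula, cut off, project to divergence-free — does follow the paper's blueprint, which likewise proves the affine case first (Proposition~\ref{prop:affine}) and then patches. But there are genuine gaps in both halves.

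For the $\liminf$, you say the cell-wise fluctuations ``become admissible competitors'' in \eqref{eq:deffhom} after periodic extension. That step is exactly where all the difficulty lives and it is not resolved by what you wrote. The fluctuation $u_\eps-(\text{affine})$ restricted to an $\eps$-cell does not have zero trace on the cell boundary, so it is not in $W^{1,2}_0$ of the rescaled block; making it so requires a cutoff, and a naive interpolation between $u_\eps$ and $\lambda x$ destroys the rigidity constraint $\operatorname{sym}\nabla u_\eps=0$ on $M_\eps$ because $\operatorname{sym}\lambda\neq 0$ in general. The paper's key idea — first replace $\lambda x$ by the rigidity-respecting modification $\hat\lambda_\eps$ from Lemma~\ref{lem:wlim-approx} and then interpolate, using a De Giorgi slicing over rings $Q_i\setminus Q_{i-1}$ together with a cutoff $\varphi_i^\eps$ chosen constant on the inclusions (cf.\ \eqref{eq:phii-const-on-inc}) — is what makes this work, and it is absent from your argument. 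Similarly, the periodic extension of $B_\eps-B_{\eps,z}$ is not divergence-free across cell boundaries, which is why the paper needs the projection of Lemma~\ref{lem:div-free-projection}; you gesture at ``preserving the divergence-free character'' but do not say how.

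For the recovery sequence, there is a concrete inconsistency: the boundary layer $\Omega\setminus\Omega_\eps$ has width $O(\eps)$, so you cannot support a cutoff with transition width $\sim\sqrt\eps$ inside it. More importantly, reducing to a single affine target ``by density of simple functions and a diagonal argument'' hides the gluing step: when $u^\delta$ is piecewise affine on several $\Omega_i$, the transition regions between adjacent $\Omega_i$ lie in the interior of $\Omega$ and do pass through rigid inclusions. Your claim that ``the transition layer contains no rigid inclusions'' is therefore false at interior interfaces. The paper handles this by choosing the partition-of-unity cutoffs $\varphi^{(r)}_{i,\eps}$ to have $\nabla\varphi^{(r)}_{i,\eps}=0$ on $M_\eps$ (possible only because of well-separation \eqref{Mwellsep}), and by a careful two-parameter estimate in $\eps$ and $r$ of the transition-layer energy (\eqref{eq:tGC-rec-13}--\eqref{eq:tGC-rec-20}); none of that is in your sketch. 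Also note that $u_\eps=Gx+\eps\tilde\varphi(x/\eps)$ cannot be brought to zero on $\partial\Omega$ by cutting off $\tilde\varphi$ alone, since the affine part $Gx$ is not affected by that cutoff; the paper fixes the boundary trace at the very end by a separate correction via Lemma~\ref{lem:wlim-approx} applied to $\tilde u_\eps-u$, not by truncating the corrector.

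In short: the architecture is the same as the paper's, but the two hard mechanisms — (a) cutting off without violating rigidity on inclusions, in both the interior and at interfaces, and (b) fitting the boundary correction inside a genuinely $O(\eps)$-thin layer — are either missing or mis-sized, and those are precisely the non-standard parts of this theorem.
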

		\begin{remark}
		As anticipated, the presence of a layer $\Omega\setminus\Omega_\varepsilon$ does not affect the final result. In particular, the homogenized energy $f_{hom}$ does not depend on $f_b$.
		\end{remark}\color{black}

\section{Examples}\label{sec:examples}

As an example (without additional externally applied fields), we can consider the energy densities
\begin{equation}
\begin{aligned}
    f(z,G,B) &:=(1-\chi_M(z))f_{soft}(z,G,B)+\chi_M(z) f_{rigid}(z,G,B) \\
    f_b(z,G,B) &:=f_{soft}(z,G,B), \\
    f_{ext}(x,B) &:= \chi_{\R^3\setminus \Omega}(x) f_v(B) 
\end{aligned}
\end{equation}
for $z\in Q$, $x\in \R^3$, $G\in \R^{3\times 3}$ and $B\in \R^{3}$. 
Here,
\begin{equation}\label{fvac}
\begin{aligned}
f_v(B):={}& \frac 1 {2\mu_0} |B|^2. 
\end{aligned}
\end{equation}
is the magnetic field energy density in vacuum.
The other, material-dependent energy contributions $f_{soft}$ and $f_{rigid}$ can be chosen as in any of the following examples. Below, $\operatorname{sym}G=\frac{1}{2}( G^\top+ G)$ denotes  the symmetric part of a matrix $G\in \R^{3\times 3}$.
\begin{example}[simple diamagnetic or paramagnetic soft and rigid materials] 
{Let us consider $f_{soft}$ and $f_{rigid}$ as}
\begin{equation}
\begin{aligned}
f_{soft}(z, G,B)&:={} f_v(B)+\frac 1 2 \mathbb C\operatorname{sym}(G):\operatorname{sym}(G)+\bigg(\frac{1}{2\mu_{soft}}-\frac{1}{2\mu_0}\bigg)|B|^2,\\
 f_{rigid}(z, G, B )& :=f_v(B)+\begin{cases}
     \Big(\frac{1}{2\mu_{rigid}}-\frac{1}{2\mu_0}\Big)|B|^2  &\text{if } \operatorname{sym}(G)=0,
      \\
     +\infty & \text{if }\operatorname{sym}(G)\neq 0,	
    \end{cases}
\end{aligned}
\end{equation}
with a given elasticity tensor $\mathbb C\in \R^{(3\times 3)\times (3 \times 3)}$ 
and given magnetic permeabilities $\mu_{soft}<0$ (diamagnetic if $\mu_{soft}<\mu_0$, paramagnetic if $\mu_{soft}>\mu_0$) and $\mu_{rigid}<0$. Notice that in the paramagnetic case, the presence of $f_v$ is crucial to ensure coercivity of the combined energy density $f$ in $B$.
The associated Eulerian magnetization is $M=-\partial_B (f(z,G,B)-f_v(B))$; for instance,
$M=\big(\frac{1}{\mu_0}-\frac{1}{\mu_{soft}}\big)B$ in the soft part $\Omega_\eps\setminus M_\eps$. Since we always have that $B=\mu_0(M+H)$, the latter is equivalent to the classical constitutive relation $B=\mu_{soft} H$.
However, this model lacks any {coupling} interaction between $B$ and $G$.
\end{example}
\begin{example}[magnetoactive, initially paramagnetic soft material with paramagnetic rigid inclusion] \label{ex:mel2}
With the vacuum energy density $f_v$ given by \eqref{fvac}, {let us consider $f_{soft}$ and $f_{rigid}$ as}
\begin{equation}
\begin{aligned}
f_{soft}(z, G,B)&:={} f_v(B)+ 
\frac{1}{2} \mathbb C\big(\operatorname{sym}(G)-E_0(z,B)\big):\operatorname{sym}(G)+\Big(\frac{1}{2\mu_{soft}}-\frac{1}{2\mu_0}\Big)|B|^2,\\
f_{rigid}(z, G, B )&:=f_v(B)+\begin{cases}
    \Big(\frac{1}{2\mu_{rigid}}-\frac{1}{2\mu_0}\Big)|B|^2  
			&\text{if } \operatorname{sym}(G)=0,  \\
    +\infty 
			& \text{if }\operatorname{sym}(G)\neq 0,	
    \end{cases}
\end{aligned}
\end{equation}
with a given elasticity tensor $\mathbb C\in \R^{(3\times 3)\times (3 \times 3)}$ (symmetric and positive definite), 
{the permeability of $\mu_{soft},\mu_{rigid}>\mu_0$} and a $Q$-periodic pre-strain $E_0(z,B)$ depending on $B$, for instance a symmetric matrix of the form
\[
    E_0(z,B):=
    \frac{1}{|B|}B\otimes B-\frac{|B|}{3}I,
\]
here chosen trace-free and thus representing a (linearized) incompressible prestrain. Notice that for symmetry reasons, any physically realistic $E_0$ should be invariant under sign changes of $B$.
Again, $f_{soft}$ and $f_{rigid}$ can be coercive in $B$, the former provided that $|\mathbb C|$ is small enough or $\frac{1}{\mu_{soft}}$ is big enough, but in both cases only due to the presence of $f_v$ which compensates for other contributions with the wrong sign. 
This interplay also demonstrates that it is not natural to assume that $f$ is everywhere convex in $B$.

For the zero displacement case $G=0$, we have
\[
	f_{soft}(z, 0,B)=f_v(B)+\Bigg(\frac{1}{2\mu_{rigid}}-\frac{1}{2\mu_0}\Bigg)|B|^2,
\]
which corresponds to a paramagnetic behavior with the magnetization 
\[
	M=-\frac{\partial}{\partial B} (f_{soft}(z, 0,B)-f_v(B))=
\bigg(\frac{1}{2\mu_0}-\frac{1}{2\mu_{soft}}\bigg) B,\]
cf.~\eqref{b-to-m} in Remark~\ref{rem:from-MH-to-B}. 
However, due to the coupling term between $G$ and $E_0$, the magnetic behavior becomes more complex as soon as 
the soft material deforms.
If we use the linear relation between $B$ and $M$ at $\operatorname{sym}(G)=0$ to rewrite $E_0$ in terms of $M$, 
we formally recover the linearized model obtained in \cite{ALKruMo24a} in context of micromagnetism. We miss both the exchange energy term $|\nabla M|^2$ and the saturation constraint $|M|=1$ included there, though. Moreover, to fully understand the coupling of $G$ and $M$ of our model here in the corresponding functional with magnetic state $(M,H)$ (cf.~Remark~\ref{rem:from-MH-to-B}) for general $G$,
we have to compute
its material density given by $\hat\Phi(z,G,M)= \Phi_{z,G}^*(M)-\frac{1}{2\mu_0}|M|^2$
with $\Phi_{z,G}(B):=-(f_{soft}(z, G,B)-f_v(B))$. Unfortunately, no way of obtaining an explicit form of $\Phi_{z,G}^*(M):=\sup_{B\in \R^3} (B\cdot M-   \Phi_{z,G}(M))$
of   $\Phi_{z,G}$,  the Fenchel conjugate involved here, seems to be known.  
\end{example}

\begin{example}[alternative magnetoactive soft material with paramagnetic inclusions] \label{ex:mel3}
We can also obtain an example directly based on Remark~\ref{rem:from-MH-to-B}, 
effectively initially imposing the magnetoelastic material response in terms of $(G,M)$ instead 
of $(G,B)$, and passing to the transformed densities. Take, for instance,
\begin{equation}
\begin{aligned}
f_{soft}(z, G,B)&:={} f_v(B)+ \Phi_{G}(B),\\
f_{rigid}(z, G, B )&:=f_v(B)+\begin{cases}
    \Big(\frac{1}{2\mu_{rigid}}-\frac{1}{2\mu_0}\Big)|B|^2  
			&\text{if } \operatorname{sym}(G)=0,  \\
    +\infty 
			& \text{if }\operatorname{sym}(G)\neq 0,	
    \end{cases}
\end{aligned}
\end{equation}
with the vacuum energy density $f_v$ given by \eqref{fvac} and 
\[
\begin{aligned}
	\Phi_{G}(B)&:=-\hat\Psi_{G}^*(B)=-\sup_{M\in \R^3}\big(M\cdot B-\hat\Psi_{\operatorname{sym}(G)}(M)\big),~~\text{where}\\
	\hat\Psi_{G}(M)&:=
	\frac{1}{2} \mathbb C\big(\operatorname{sym}(G)-E_0(M)\big):\big(\operatorname{sym}(G)-E_0(M)\big)
	+\alpha (|M|^2+|M|^p)+\frac{\mu_0}{2} |M|^2,	
\end{aligned}	
\]
cf.~\eqref{hatPhi-to-Phi}.
Here, $p\geq 2$, $\alpha>0$ , $\mu_{rigid}>\mu_0$ (for paramagnetic inclusions) and
$\mathbb C\in \R^{(3\times 3)\times (3\times 3)}$ (symmetric and positive definite) are parameters,
and $E_0(M)$ is a magnetically induced pre-strain, for instance of the form
\[
		E_0(M):=|M|^{\frac{p}{2}-2}M\otimes M-\frac{\beta}{3}|M|^{\frac{p}{2}}I
\]
with some $\beta\in \R$, 
similarly as in Example~\ref{ex:mel2} inspired by the model of \cite{ALKruMo24a}. 
In particular, we can choose $E_0(M)=M\otimes M-\frac{1}{3}|M|^{2}I$ for $p=4$ and $\beta=1$ (incompressible pre-strain).

One can check that for any $p\geq 2$ and $\alpha>0$, 
the assumptions of our main result, Theorem~\ref{thm:ConvLin}, hold in this example.
In fact, they are easy to see except for \eqref{f1} and \eqref{f3} which are related to $f_{soft}$.  

To prove \eqref{f1} (growth and coercivity in the soft material part), 
we can use that $\mathbb C$ is positive definite and Young's inequality to split the coupling term in $\hat\Psi_{G}(M)$, so that for any $\delta>0$, 
\[
\begin{aligned}
	c_1\Big(1-\frac{1}{\delta}\Big) |\operatorname{sym}(G)|^2
	+c_1(1-\delta) |E_0(M)|^2+\alpha (|M|^2+|M|^p) &\\
	\leq \hat\Psi_{G}(M)-\frac{\mu_0}{2} |M|^2
	\leq C_1 (|\operatorname{sym}(G)|^2+|E_0(M)|^2)&,
\end{aligned}
\]
with some constants $c_1,C_1>0$. Since $\alpha>0$ and $c_2|M|\leq |E_0(M)|\leq C_2|M|^{\frac{p}{2}}$,
we can choose $\delta:=1+\frac{\alpha}{2C_2}>1$ to see that
\[
	c_3|\operatorname{sym}(G)|^2+\frac{\alpha}{2}|M|^p+ \frac{\mu_0}{2}|M|^2
	\leq \hat\Psi_{G}(M)
	\leq C_3 (|\operatorname{sym}(G)|^2+|M|^p+1).
\]
Since $\frac{\alpha}{2}|M|^p+\frac{\mu_0}{2}|M|^2\geq \frac{\alpha+\mu_0}{2}|M|^2-\frac{\alpha}{2}$
and for any $c>0$, $(\frac{c}{2}|\cdot|^2)^*=\frac{1}{2c}|\cdot|^2$ and
$(\frac{c^p}{p}|\cdot|^p)^*=\frac{c^{-p'}}{p'}|\cdot|^{p'}$,
where $p':=\frac{p}{p-1}$ (i.e., $\frac{1}{p'}+\frac{1}{p}=1$),
this implies that with some $c_4>0$,
\[
\begin{aligned}
	&-\Big(c_3|\operatorname{sym}(G)|^2-\frac{\alpha}{2}\Big)+\frac{1}{2}\frac{1}{\alpha+\mu_0}|B|^2 \\
	&\qquad \geq
	\hat\Psi_{G}^*(B) \geq -C_3(|\operatorname{sym}(G)|^2+1)+c_4 |B|^{p'}
	\geq -C_3(|\operatorname{sym}(G)|^2+1). 
\end{aligned}
\]
As $\alpha>0$, this ensures that $f_v(B)=\frac{1}{2\mu_0}|B|^2$ dominates $-\hat\Psi_{G}^*$ concerning growth and coercivity for large values of $|B|$, and we conclude that $f_{soft}(z, G,B)$ indeed satisfies \eqref{f1}.
Assumption \eqref{f3} (Lipschitz property in the soft part) now follows from Lemma~\ref{lem:Fenchel-Lipschitz} below (see also Remark~\ref{rem:Fenchel}), with $q=\frac{p}{2}$ and $\R^k=\R^6$ identified with the symmetric matrices in $\R^{3\times 3}$, using $\R^k\ni \tilde{G}=\operatorname{sym}(G)$ as the parameter of $\Theta(\tilde{G},B):=\hat\Psi_G(B)$.
Notice that in our proof of \eqref{f1}, we have in particular shown the growth and coercivity condition \eqref{Theta-gc} assumed in part (i) of the lemma, and \eqref{Theta-gro2} can be obtained in a similar fashion.

Just as in Example~\ref{ex:mel2}, we unfortunately lack a way of computing $\hat\Psi_{G}^*$ explicitly unless $\operatorname{sym}(G)=0$.
\end{example}

\begin{remark}
Without additional external forcing or nontrivial boundary conditions, the energy in all examples is minimized by $(u,B)=(0,0)$. For example, such a forcing can be included by adding  
the energy of an externally applied magnetic field $h_{a}\in L^2(\R^3;\R^3)$ to $E_{\varepsilon}(u, B)$,
i.e., the additional term
\[
	\int_{\R^3} \frac{1}{\mu_0} b_{a}\cdot B(x)\,dx.
\]
This term just amounts to a $L^2$-weakly continuous perturbation of $E_{\varepsilon}$ which does not interfere with the homogenization, by general properties of $\Gamma$-convergence (see for example \cite{Dal12IGC}). We can therefore safely ignore it from now on. 
\end{remark}

		\section{Proof of the convergence result}\label{sec:hom}
		
		\subsection{A few preliminaries}\label{ssec:prelim}

		For the lower bound, we will face the technical problem that a weak limit $ u$ of microscopically admissible states in general does not inherit anything from the implicit microscopic rigidity constraint $\operatorname{sym}(\nabla  u_\eps)=0$ on $\{\chi_M(\cdot/\eps)=1\}$. 
		To compensate this, we will employ a suitable approximation given by the lemma below.
		Functionally, this is very similar to related arguments in \cite{BraiGa95a} handling rigid inclusions.

		\begin{lemma}\label{lem:wlim-approx}
			Let $\Lambda\subset \R^3$  be a  Lipschitz domain and 
			$w\in W^{1,2}(\Lambda;\R^3)$. In addition, assume that the measurable, $Q$-periodic characteristic function $\chi_M$ 
			satisfies $\{\chi_M=1\}\cap Q\subset\subset Q$ with the unit cube $Q$,
			and define
			\[
			\cQ(\eps):=\big\{ Q_\eps:=\eps( Z+Q)\subset \R^3 \mid  Z\in \Z^3~\text{and}~Q_\eps\subset\subset \Lambda \big\}
			\]
			and 
			\[
			M(\eps):=\big\{  x\in {\textstyle\bigcup_{Q_\eps\in \cQ(\eps)}Q_\eps}\,\big|\, \chi_M\big(\tfrac{1}{\eps}  x\big)=1 \big\}.
			\]
			Then there exists a function ${\hat w}_\eps\in W^{1,2}(\Lambda;\R^3)$ 
			(also depending on  $\Lambda$ and $\chi_M$) such that all of the following holds:
			\begin{align}\label{eq:hatu}
				\begin{aligned}
					&\hat w_\eps=w \quad\text{on $\Lambda\setminus {\textstyle\bigcup_{Q_\eps\in \cQ(\eps)}Q_\eps}$};\\
					&\hat w_\eps=w \quad\text{on $\partial Q_\eps$ for each $Q_\eps\in \cQ(\eps)$ (in the sense of traces)};\\
					& \nabla \hat w_\eps= 0\quad\text{on $M(\eps)$};\\
					& \|\nabla\hat w_\eps\|_{L^2(Q_\eps;\R^{3\times 3})}\leq C 
					 \|\nabla w\|_{L^2(Q_\eps;\R^{3\times 3})}  \quad\text{for all $Q_\eps\in \cQ(\eps)$}.
				\end{aligned}
			\end{align}
			Here, $C=C(\chi_M)\geq 1$ is a constant independent of $w$, $\eps$, $Q_\eps$ and $\Lambda$. In addition, we have that 
			\begin{align}\label{eq:hatu2}
				\begin{aligned}
					\hat w_\eps \underset{\eps\to 0}{\rightharpoonup} w\quad \text{weakly in}~~W^{1,2}(\Lambda;\R^3).
				\end{aligned}
			\end{align}
			If $w\in W^{1,\infty}(\Lambda;\R^3)$, then so is $\hat w_\eps$, and there is a constant $C>0$ 
			only depending on the distance of the inclusion $Q\cap \{\chi_M\}$ to the boundary of $Q$
			so that
			\[
			\|\nabla\hat w_\eps\|_{L^\infty(\Lambda;\R^{3\times 3})}\leq C \|\nabla\hat w\|_{L^\infty(\Lambda;\R^{3\times 3})}.
			\]
		\end{lemma}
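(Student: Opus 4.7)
The plan is a cell-by-cell modification of $w$ via a fixed cutoff adapted to the inclusion geometry. Using the well-separation hypothesis \eqref{Mwellsep}, I would fix once and for all a cutoff $\eta\in C_c^\infty(Q;[0,1])$ with $\eta\equiv 1$ on an open neighborhood of $\overline{M\cap Q}$ and supported strictly inside $Q$. On each interior cell $Q_\eps=\eps(Z+Q)\in\cQ(\eps)$, I would then define
\[
\hat w_\eps(x):=w(x)+\eta\!\bigl(\tfrac{x}{\eps}-Z\bigr)\bigl(c_\eps(Z)-w(x)\bigr),\qquad c_\eps(Z):=\tfrac{1}{|Q_\eps|}\int_{Q_\eps}w\,dy,
\]
and set $\hat w_\eps:=w$ on the complement of $\bigcup_{Q_\eps\in\cQ(\eps)} Q_\eps$ in $\Lambda$.

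Since $\eta$ vanishes in a neighborhood of $\partial Q$, $\hat w_\eps$ coincides with $w$ near every cell boundary, so the piecewise definition glues to a function in $W^{1,2}(\Lambda;\R^3)$ whose trace matches $w$ on each $\partial Q_\eps$. On $M(\eps)\cap Q_\eps$ one has $\eta(x/\eps-Z)=1$, hence $\hat w_\eps\equiv c_\eps(Z)$ is locally constant there and $\nabla\hat w_\eps=0$ a.e. This establishes the first three items in \eqref{eq:hatu}.

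For the fourth bound I would use the chain rule to write, on $Q_\eps$,
\[
\nabla\hat w_\eps(x)=\bigl(1-\eta(\tfrac{x}{\eps}-Z)\bigr)\nabla w(x)+\tfrac{1}{\eps}\bigl(c_\eps(Z)-w(x)\bigr)\otimes\nabla\eta(\tfrac{x}{\eps}-Z),
\]
and then invoke the Poincaré inequality on $Q_\eps$ in its $\eps$-scaled form $\|w-c_\eps(Z)\|_{L^2(Q_\eps)}\leq C\eps\,\|\nabla w\|_{L^2(Q_\eps)}$, obtained by the standard rescaling $y\mapsto \eps y+\eps Z$ from Poincaré on the unit cube. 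This exactly absorbs the offending $1/\eps$ and yields the desired cellwise bound with a constant depending only on $\|\eta\|_{W^{1,\infty}}$, and hence on $\chi_M$. The same Poincaré estimate applied cellwise to $\hat w_\eps-w=\eta(\cdot/\eps-Z)(c_\eps(Z)-w)$ gives $\|\hat w_\eps-w\|_{L^2(\Lambda)}\leq C\eps\,\|\nabla w\|_{L^2(\Lambda)}\to 0$, i.e.\ strong $L^2$ convergence, which combined with the uniform $W^{1,2}$ bound forces $\hat w_\eps\rightharpoonup w$ weakly in $W^{1,2}(\Lambda;\R^3)$ and proves \eqref{eq:hatu2}.

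For the final $W^{1,\infty}$ statement I would instead bound the mean-value remainder by $|c_\eps(Z)-w(x)|\leq \diam(Q_\eps)\,\|\nabla w\|_{L^\infty(Q_\eps)}=\sqrt{3}\,\eps\,\|\nabla w\|_{L^\infty}$, which again cancels the $1/\eps$ in the gradient formula and yields the $L^\infty$ bound with a constant depending only on $\|\eta\|_{W^{1,\infty}}$ (equivalently, on the separation of $M\cap Q$ from $\partial Q$). The only mildly delicate point is the correct $\eps$-scaling of the Poincaré constant, which is standard; everything else is routine cutoff bookkeeping. The structural ingredient making the whole construction possible is precisely \eqref{Mwellsep}, which allows $\eta$ to be chosen once and for all, independently of $\eps$.
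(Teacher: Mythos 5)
Your proposal is correct and follows essentially the same approach as the paper: a cell-by-cell cutoff interpolation between $w$ and a local average, with the scaled Poincar\'e inequality absorbing the $1/\eps$ from the cutoff gradient. The paper differs only cosmetically (it averages over a smaller neighborhood $N_\eps$ of the inclusion rather than over the full cell, and constructs the cutoff at scale $\eps$ rather than fixing one reference cutoff $\eta$ and rescaling), and it actually omits the $W^{1,\infty}$ case which you handle explicitly via the Lipschitz mean-value bound.
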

		\begin{proof}
			
			Let $N_\varepsilon\subset Q_\varepsilon$ be an open set such that $\{x\in Q_\varepsilon:\, \chi_M(x/\varepsilon)=1\}\subset N_\varepsilon$. Moreover, we can assume that $N_\varepsilon$ has a smooth boundary. Let  $\varphi_\varepsilon\in C^\infty_0(Q_\varepsilon;[0,1])$   be such that $\varphi_\varepsilon(x)=1$ if $\chi_M(x/\varepsilon)=1$, $\varphi_\varepsilon(x)=0$ if $x\in Q_\varepsilon\setminus N_\varepsilon$ and $|\nabla\varphi_\varepsilon|\le K\varepsilon^{-1}$ for all $\varepsilon>0$ where $K>0$ does not depend on $\varepsilon$. Let $ \mu_\varepsilon=|N_\varepsilon|^{-1}\int_{N_\varepsilon}w\,{\rm d}x$ and $w_\varepsilon=\varphi_\varepsilon\mu_\varepsilon+(1-\varphi_\varepsilon)w$.
			Consequently, 
			$$\nabla w_\varepsilon=(1-\varphi_\varepsilon)\nabla w+( \mu_\varepsilon-w)\otimes\nabla\varphi_\varepsilon.$$
			The second term is bounded in $L^2(Q_\eps;\R^3)$ by the Poincar\'{e}-Wirtinger inequality and this yields a  bound on $\nabla w_\varepsilon$ as desired.

			To see that $w_\eps \rightharpoonup w$ weakly in $W^{1,2}$, first observe that as a bounded sequence, all its subsequences will have a weakly convergent subsequence. Therefore, it suffices to check that this weak limit always coincides with $w$. Actually, 
			$w_\eps \to w$ strongly in $L^2$:
			\begin{equation} \label{eq:hatwL2conv}
				\begin{aligned}
					&\int_{\Lambda} |\hat{w}_\eps-w|^2\,dx = \sum_{Q_\eps\in \cQ(\eps)} \int_{Q_\eps}|\hat{w}_\eps-w|^2\,dx  \\
					& \leq \sum_{Q_\eps\in \cQ(\eps)} C \eps^2 \int_{Q_\eps} |\nabla \hat{w}_\eps-\nabla w|^2 \\
					& \leq \sum_{Q_\eps\in \cQ(\eps)} 2C \eps^2 \int_{Q_\eps} (|\nabla \hat{w}_\eps|^2+|\nabla w|^2)
					\leq \eps^2 
					\tilde{C} \int_{\Lambda} |\nabla w|^2\to 0.
				\end{aligned}
			\end{equation}
			Here, we used the Poincar\'{e} inequality on each $\eps$-cell $Q_\eps$ (with the natural scaling of its constant), combined with the 
			properties \eqref{eq:hatu} of $\hat{w}_\eps$, in particular the bound
			for its gradient in terms of $w$.

		\end{proof}

		In addition, we will need a way to combine the divergence-free structure of the magnetic field with cut-off arguments. 
		The latter typically destroy differential constraints, but is is possible to project back with controlled error. 
		In fact, this can even be done in the more general context of so-called $\cA$-free fields, where $\cA=\div$ is one of many admissible examples \cite{FoMu99a}. In our special context, the following suffices:
		\begin{lemma}\label{lem:div-free-projection}
			Let $Q\subset \R^3$ be an open cube. Then exists a bounded linear projection
			\[
			\cP:L^2(Q;\R^3)\to L^2_{\Qper,\div}(\R^3;\R^3) :=
			\left\{{B}\in L^2_\loc(\R^3;\R^3) \,\left|\, 
			\begin{array}{l} 	
				{B}~~\text{is $Q$-periodic}\\
				\text{and}~~\div {B}=0~~\text{in $\R^3$}
			\end{array}\right.\right\}
			\]
			such that for all ${B}\in L^2_\Qper(\R^3;\R^3)$ 
			(i.e., ${B}\in L^2(Q;\R^3)$ interpreted as a periodic function on $\R^3$ by periodic extension),
			\[
			\|{B}-\cP {B}\|_{L^2(Q;\R^3)}\leq C \|\div{B}\|_{(W^{1,2}_\Qper(\R^3;\R^3))^*}
			\]
			with a constant $C>0$ independent of ${B}$.
		\end{lemma}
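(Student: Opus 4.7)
The plan is to perform a Helmholtz-type decomposition on the periodic cell. Given $B\in L^2(Q;\R^3)$, viewed via $Q$-periodic extension as an element of $L^2_\Qper(\R^3;\R^3)$, I would split $B$ as the sum of a divergence-free $Q$-periodic field and a periodic gradient. Concretely, I would solve the periodic Poisson problem $\Delta \phi = \div B$ with $\int_Q \phi\,dz = 0$ and then set $\cP B := B - \nabla \phi$. The solvability compatibility $\langle \div B, 1\rangle = 0$ is automatic, since pairing against the constant $1$ (a legitimate periodic test function) gives $-\int_Q B\cdot \nabla 1\,dz = 0$.

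To realize this rigorously, I would apply the Lax--Milgram theorem on the Hilbert space
\begin{equation*}
V := \Bigl\{\psi \in W^{1,2}_\Qper(\R^3)\,\Bigm|\, \textstyle\int_Q \psi\,dz=0\Bigr\}
\end{equation*}
endowed with the inner product $(\phi,\psi)\mapsto \int_Q \nabla \phi\cdot\nabla \psi\,dz$, which generates the full $W^{1,2}$-norm on $V$ by Poincaré--Wirtinger on the torus. The weak formulation $\int_Q \nabla \phi\cdot\nabla \psi\,dz = -\langle \div B, \psi\rangle$ for all $\psi\in V$ has a unique solution $\phi\in V$: the right-hand side is a bounded linear functional on $W^{1,2}_\Qper(\R^3)$ of norm at most $\|\div B\|_{(W^{1,2}_\Qper(\R^3))^*}$, and since it annihilates constants, it descends cleanly to $V$. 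Setting $\cP B := B-\nabla \phi$ then yields a $Q$-periodic $L^2$ field with $\div(\cP B) = \div B - \Delta \phi = 0$ in $\R^3$, and the assignment $B\mapsto \phi$ (hence $\cP$) is manifestly linear. Moreover $\cP^2=\cP$: if $B\in L^2_{\Qper,\div}$, then $\langle \div B,\psi\rangle=0$ for all $\psi\in V$, forcing $\phi=0$ and $\cP B=B$; applied to the already divergence-free $\cP B$, this gives $\cP(\cP B)=\cP B$.

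For the quantitative estimate, I would test the weak equation with $\psi=\phi$ itself:
\begin{equation*}
\|\nabla \phi\|_{L^2(Q)}^2 = -\langle \div B, \phi\rangle \leq \|\div B\|_{(W^{1,2}_\Qper(\R^3))^*}\, \|\phi\|_{W^{1,2}_\Qper(\R^3)}.
\end{equation*}
Since $\phi$ has mean zero, Poincaré--Wirtinger yields $\|\phi\|_{W^{1,2}_\Qper(\R^3)}\leq C\|\nabla \phi\|_{L^2(Q)}$, and dividing through gives $\|B-\cP B\|_{L^2(Q)} = \|\nabla \phi\|_{L^2(Q)}\leq C\|\div B\|_{(W^{1,2}_\Qper(\R^3))^*}$, with $C$ depending only on $Q$. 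Boundedness of $\cP:L^2(Q;\R^3)\to L^2_{\Qper,\div}(\R^3;\R^3)$ is then automatic from the trivial estimate $\|\div B\|_{(W^{1,2}_\Qper)^*}\leq \|B\|_{L^2(Q)}$.

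There is no substantial obstacle here; the construction is a textbook periodic Helmholtz decomposition. The only mildly delicate point is the choice of dual space: $\div B$ has to be interpreted as a periodic distribution (equivalently, a distribution on the flat torus) and tested only against periodic $\psi$, which is precisely why the norm $\|\cdot\|_{(W^{1,2}_\Qper)^*}$ appears on the right-hand side of the estimate rather than some negative Sobolev norm on all of $\R^3$.
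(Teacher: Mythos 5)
Your argument is correct and is essentially the same as the paper's (sketch) proof: both construct $\cP B = B - \nabla\phi$ where $\phi$ solves the mean-zero periodic Poisson problem $\Delta\phi=\div B$, and read off the error bound from the weak formulation together with Poincar\'e--Wirtinger. You simply supply more detail (Lax--Milgram, the compatibility condition, verification of $\cP^2=\cP$) where the paper says ``it is not hard to check.''
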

		\begin{proof} 
			One can apply \cite[Lemma 2.14]{FoMu99a} with $p=2$ and $\cA=\div$. 
			For the reader's convenience, we sketch a proof for our (easier and well-known) special case below.
			For each ${B}\in L^2_\Qper(\R^3;\R^3)$ define 
			\[
			\cP {B}:={B}-\nabla v,
			\]
			where $v=v({B})\in W^{1,2}_\Qper(\R^3)$ is given as the unique weak solution of
			\[
			\Delta v=\div {B}~~~\text{on $\R^3$}, ~~~\int_Q v = 0.
			\]
			It is not hard to check that $\cP$ now has all asserted properties. 
		\end{proof}
		
		\begin{lemma}\label{lem:div-free-proj-R3} 
			There exists a bounded linear projection
			\[
			\cP:L^2(\R^3;\R^3)\to L^2_{\div}(\R^3;\R^3):=
			\left\{B\in L^2(\R^3;\R^3) \,\left|\, 
			\begin{array}{l} 	
				\div {B}=0~~\text{in $\R^3$}
			\end{array}\right.\right\}
			\]
			such that for all ${B}\in L^2(\R^3;\R^3)$,
			\[
			\|{B}-\cP {B}\|_{L^2(\R^3;\R^3)}\leq C \|\div{B}\|_{(\mathring W^{1,2}(\R^3))^*}
			\]
			with a constant $C>0$ independent of ${B}$. Here,
			\[
			\mathring W^{1,2}(\R^3)):=\{[v]\mid v\in W^{1,2}_{loc}(\R^3),~ \|v\|_{\mathring W^{1,2}}^2:=\int_{\R^3} |\nabla v|^2<\infty\},
			\]
			and the equivalence class $[v]$ of $v$ consists of all functions $w$ with $\nabla v=\nabla w$ a.e.~in $\R^3$.
		\end{lemma}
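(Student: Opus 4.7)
The plan is to follow the same template as the periodic version (the preceding lemma), replacing the periodic Poisson solver by a Poisson solver on all of $\R^3$ in the homogeneous Sobolev space $\mathring W^{1,2}(\R^3)$. Concretely, for given $B\in L^2(\R^3;\R^3)$, I would define
\[
    \cP B := B - \nabla v,
\]
where $v\in \mathring W^{1,2}(\R^3)$ is the unique weak solution of $\Delta v = \div B$ on $\R^3$, that is,
\[
    \int_{\R^3} \nabla v\cdot \nabla\phi\,dx = \int_{\R^3} B\cdot\nabla\phi\,dx\qquad\text{for all } \phi\in \mathring W^{1,2}(\R^3).
\]
Everything to verify then splits cleanly: (a) the variational problem for $v$ is well posed; (b) $\cP B$ lies in $L^2_{\div}(\R^3;\R^3)$ and $\cP$ is a bounded linear projection; (c) the claimed estimate holds with $C=1$.

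For (a), I would first observe that $\mathring W^{1,2}(\R^3)$, as defined in the statement, is a Hilbert space with inner product $\langle u,v\rangle := \int_{\R^3}\nabla u\cdot\nabla v\,dx$. Completeness follows from the Gagliardo–Nirenberg–Sobolev inequality $\|v\|_{L^6(\R^3)}\leq C\|\nabla v\|_{L^2(\R^3)}$ applied to any equivalence class representative with mean zero over some large ball, which fixes the otherwise free additive constant and identifies $\mathring W^{1,2}(\R^3)$ with a closed subspace of $\dot W^{1,2}$; see e.g.~standard references on homogeneous Sobolev spaces. Since $C_c^\infty(\R^3)$ is dense in $\mathring W^{1,2}(\R^3)$ and $|\int B\cdot\nabla\phi|\leq \|B\|_{L^2}\|\nabla\phi\|_{L^2}$, the distribution $\div B$ extends uniquely to an element of $(\mathring W^{1,2}(\R^3))^*$ paired with $\phi$ via $\langle\div B,\phi\rangle=-\int B\cdot\nabla\phi$. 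The Riesz representation theorem then yields a unique $v\in \mathring W^{1,2}(\R^3)$ satisfying the variational identity above.

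For (b), linearity of $\cP$ is clear from uniqueness of $v$. As $\nabla v\in L^2(\R^3;\R^3)$ by construction, $\cP B\in L^2(\R^3;\R^3)$. Testing the variational identity against an arbitrary $\phi\in C_c^\infty(\R^3)$ shows $\int_{\R^3}(B-\nabla v)\cdot\nabla\phi=0$, i.e.~$\div \cP B = 0$ in the sense of distributions. Finally, if $B$ is already divergence-free, the trivial choice $v=0$ solves the problem, so $\cP\cP B=\cP B$, proving $\cP$ is a projection onto $L^2_{\div}(\R^3;\R^3)$. For (c), testing the variational identity with $\phi=v$ gives
\[
    \|\nabla v\|_{L^2(\R^3)}^2 = -\langle\div B, v\rangle \leq \|\div B\|_{(\mathring W^{1,2}(\R^3))^*}\|\nabla v\|_{L^2(\R^3)},
\]
so $\|B-\cP B\|_{L^2(\R^3)} = \|\nabla v\|_{L^2(\R^3)} \leq \|\div B\|_{(\mathring W^{1,2}(\R^3))^*}$, proving the estimate with $C=1$.

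The main obstacle I anticipate is the Hilbert-space structure of $\mathring W^{1,2}(\R^3)$ via the quotient-by-constants construction: one must make sure that the bilinear form $(u,v)\mapsto\int\nabla u\cdot\nabla v$ is an inner product on this space (i.e., is definite on equivalence classes, which is immediate from the definition) and that the resulting normed space is complete. As an alternative to invoking the Sobolev inequality, one can instead appeal directly to \cite[Lemma 2.14]{FoMu99a} with $p=2$ and $\cA=\div$ on the whole space, which delivers exactly the bounded linear projection asserted here and bypasses the need to analyse $\mathring W^{1,2}(\R^3)$ by hand.
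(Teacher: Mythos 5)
Your proposal is correct and follows essentially the same route as the paper: you set $\cP B := B - \nabla v$ where $v$ is the unique element of $\mathring W^{1,2}(\R^3)$ solving $\Delta v = \div B$ weakly, and obtain the estimate by testing with $\phi=v$; the paper phrases $v$ as the minimizer of the Dirichlet functional $w\mapsto \int \frac{1}{2}|\nabla w|^2 + (\div B)w$, which is the same thing via its Euler–Lagrange equation. The only minor difference is that you spell out the completeness of $\mathring W^{1,2}(\R^3)$ via the Sobolev embedding, which the paper leaves implicit (and the paper's cited fallback is \cite{Kroe11a} rather than \cite{FoMu99a}, the latter being used for the periodic version).
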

		\begin{proof} 
			A much more general version of this can be found in \cite[Lemma 3.4]{Kroe11a}.
			For the reader's convenience, we give a simple proof of our special case here, similar to the one of Lemma~\ref{lem:div-free-projection}.
			We define $v$ (more precisely, its equivalence class) as the minimizer of $
			w\mapsto \int_{\R^3} \frac{1}{2}|\nabla w|^2+ (\div B) w\,dx$
			in $\mathring W^{1,2}(\R^3)$, and set $\cP {B}:={B}-\nabla v$ as before. 
			For the error estimate, notice that 
			$\div B$, interpreted as a linear functional in the dual space $(\mathring W^{1,2}(\R^3))^*$, is naturally defined as
			\[
			(\div B)[\varphi]:=-\int_{\R^3} B\cdot \nabla\varphi\,dx,\quad \varphi\in \mathring W^{1,2}(\R^3).
			\]
			Hence
			\[
			\begin{aligned}
				&\|{B}-\cP {B}\|_{L^2(\R^3;\R^3)} =\int_{R^3} |\nabla v|^2
				=\big|(\div B)[v]\big| \\
				&\qquad\leq \|\div B\|_{(\mathring W^{1,2}(\R^3))^*} \|v\|_{\mathring W^{1,2}(\R^3)}
				=\|\div B\|_{(\mathring W^{1,2}(\R^3))^*} \|\nabla v\|_{L^{2}(\R^3;\R^{3})}.
			\end{aligned}
			\]
		\end{proof}

		\subsection{Homogenization via Gamma-convergence}
		This subsection is dedicated to the proof of Theorem~\ref{thm:ConvLin}. We will loosely follow the strategy of M\"uller in \cite{Muller1987}. Compared to M\"uller's proof, we have to be significantly more careful when employing cut-off arguments, to preserve the constraint of rigidity on the inclusions. 
		Our arguments to handle the latter are somewhat similar to those of \cite{BraiGa95a} on the technical level, but our proof is self-contained with Lemma~\ref{lem:wlim-approx} as a crucial ingredient.
		In addition, the divergence-free magnetic fields defined on the whole space have to be handled with care.
		For this purpose, we exploit both Lemma~\ref{lem:div-free-projection} and Lemma~\ref{lem:div-free-proj-R3}.
		
		As in \cite{Muller1987}, we start our proof with a special case.
		To slightly simplify the notation whenever domain dependence temporarily plays a role, we will use the short-hand
		\begin{equation}\label{Edomains}	
			E_\eps(u,B;\Lambda):=
			\int_{(\Omega\cap\Lambda)\setminus \Omega_\eps}f_b(\nabla u,B)\,dx + 
			\int_{\Omega_\eps \cap\Lambda} f\Big(\frac{x}{\eps},\nabla u,B\Big)\,dx+\int_{\Lambda\setminus \Omega} f_{ext}(x,B)\,dx
		\end{equation}
		for any open $\Lambda\subset \R^3$. Be warned that with this definition, $E_\eps(u,B)=E_\eps(u,B;\R^3)\neq E_\eps(u,B;\Omega)$, as the contribution on $\R^3\setminus \Omega$ is missing in the latter.
		For the limit functional $E_{hom}$, $E_{hom}(u,B;\Lambda)$ is defined analogously.
		\begin{proposition}[The case of affine/constant weak limits] \label{prop:affine}
			Let $\Omega\subset \R^3$ be a bounded Lipschitz domain. We consider the functionals 
			$E_\eps$ and $E_{\rm hom}$ be given by \eqref{Eeps}, \eqref{Ehom} and \eqref{Edomains}, with integrands 
			$f$ and $f_{ext}$ satisfying \eqref{f0}--\eqref{f3}  and \eqref{fext0}--\eqref{fext2}. 
			Moreover, suppose that 
			\[
			\text{$u(x)=\lambda x$ in $\Omega$ and $B\in L^2_{\div}(\R^3;\R^3)$ with $B(x)=B_0$ in $\Omega$}, 
			\]
			with constant $\lambda\in \R^{3\times 3}$ and $B_0\in \R^3$. 
			Then for every bounded Lipschitz domain $\Lambda\subset \R^3$ as well as for $\Lambda=\R^3$, the following statements hold: 
			\begin{enumerate} 
				\item[(i)] ($\Gamma-\liminf$ inequality) For every sequence $(u^\eps,B^\eps)_{\eps>0}
				\subset W^{1,2}(\Omega;\R^3)\times L^2_{\div}(\R^3;\R^3)$ such that $(u^\eps,B^\eps)\rightharpoonup ( u, B)$  in $ W^{1,2}(\Omega;\R^3)\times L^2(\R^3;\R^3)$ for $\eps\to 0$ we have 
				\begin{align}\label{eq:LinLiminf}
					\liminf_{\eps\to 0} E_\eps( u^\eps,B^\eps ;\Lambda )\geq E_{\rm hom}( u, B  ; \Lambda );
				\end{align}
				\item[(ii)]  (almost recovery sequence) For every $\delta>0$, there exists a sequence $( u^{\delta,\eps}, B^{\delta,\eps}) \subset W^{1,2}(\Omega;\R^3)\times L^2_{\div}(\R^3;\R^3)$ 
				such that as $\eps\to 0$, $( u^{\delta,\eps}, B^{\delta,\eps})\rightharpoonup(u, B)$ weakly in 
				$ W^{1,2}(\Omega;\R^3)\times L^2(\R^3;\R^3)$ and
				\begin{align}\label{eq:LinLimsup} 
					\limsup_{\eps\to 0} E_\eps( u^{\delta,\eps},B^{\delta,\eps} ;\Lambda )\leq E_{\rm hom}(u, B ;\Lambda )+|\Lambda\cap\Omega|\delta. 
				\end{align} 
				In addition, the sequence can be chosen so that $|\nabla u^{\delta,\eps}|^2+|B^{\delta,\eps}|^2$ 
				is equi-integrable  for each fixed $\delta$. 
			\end{enumerate} 
		\end{proposition}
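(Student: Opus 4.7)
The plan is to follow M\"uller's scheme from \cite{Muller1987}, adapted so that the rigidity constraint on inclusions and the divergence-free constraint on $B$ survive the necessary cut-off manipulations. Lemmas~\ref{lem:wlim-approx} and~\ref{lem:div-free-proj-R3} provide the key technical tools. The exterior contributions are handled directly: in part~(i) by the weak lower semicontinuity \eqref{fext2}, and in part~(ii) by taking $B^{\delta,\eps}=B$ outside $\Omega$ and invoking continuity \eqref{fext1}.

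For the recovery sequence in part~(ii), I would pick $k\in\mathbb{N}$, $\varphi\in W_0^{1,2}(Q;\mathbb{R}^3)$ and $\beta\in L^2_\Qper(\mathbb{R}^3;\mathbb{R}^3)$ with zero mean and $\div\beta=0$ nearly attaining the infimum in \eqref{eq:deffhom}, i.e.,
\[
\int_Q f\bigl(kz,\lambda+\nabla\varphi(z),B_0+\beta(z)\bigr)\,dz\leq f_{hom}(\lambda,B_0)+\tfrac{\delta}{2}.
\]
Passing to the equivalent $kQ$-periodic formulation via $\Phi(\tilde z):=k\varphi(\tilde z/k)$ and $\hat\beta(\tilde z):=\beta(\tilde z/k)$ (so $\Phi\in W_0^{1,2}(kQ;\mathbb{R}^3)$ extended $kQ$-periodically and $\hat\beta$ is $kQ$-periodic, zero-mean, $\div$-free), I would set
\[
u^{\delta,\eps}(x):=\lambda x+\eps\,\Phi\bigl(\tfrac{x}{\eps}\bigr)~\text{on}~\Omega,\qquad B^{\delta,\eps}(x):=\begin{cases} B_0+\hat\beta(\tfrac{x}{\eps}) & x\in\Omega,\\ B(x) & x\in\mathbb{R}^3\setminus\Omega.\end{cases}
\]
The admissibility condition $\operatorname{sym}(\lambda+\nabla\varphi(z))=0$ on $\{\chi_M(kz)=1\}$, forced by \eqref{f2}, ensures rigidity of $u^{\delta,\eps}$ on $M_\eps$. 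To restore global $\div$-freeness I would apply Lemma~\ref{lem:div-free-proj-R3} to the patched $B^{\delta,\eps}$; the divergence discrepancy is a surface distribution on $\partial\Omega$ with oscillating density $\hat\beta(\cdot/\eps)\cdot n$, whose $(\mathring{W}^{1,2})^*$-norm vanishes as $\eps\to 0$ by periodic averaging, so the $L^2$-correction is negligible. A change of variables $y=x/\eps$, tiling by $kQ$-blocks, and using $\int_{kQ}f(y,\lambda+\nabla\Phi,B_0+\hat\beta)\,dy=k^3\int_Q f(kz,\lambda+\nabla\varphi,B_0+\beta)\,dz$ yields the interior contribution $|\Omega\cap\Lambda|(f_{hom}(\lambda,B_0)+\tfrac{\delta}{2})+O(\eps)$, the $O(\eps)$ coming from incomplete blocks. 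The boundary-layer contribution on $\Omega\setminus\Omega_\eps$ is $O(\eps)$ by the growth in \eqref{f1}, and the exterior matches $E_{hom}$'s. Summing yields \eqref{eq:LinLimsup}. Equi-integrability of $|\nabla u^{\delta,\eps}|^2+|B^{\delta,\eps}|^2$ is secured by first truncating $\Phi,\hat\beta$ at large modulus (at cost $\delta/2$ by approximation), after which the rescaled profiles are uniformly $L^\infty$-bounded.

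For the liminf in part~(i), equi-coercivity (Remark~\ref{rem:equi-coercive}) gives a subsequence with bounded $W^{1,2}\times L^2$-norms. A cell-averaging argument \`a la \cite{Muller1987}, equivalent to two-scale convergence, yields correctors $\psi(x,\cdot)\in W^{1,2}_\Qper(\mathbb{R}^3;\mathbb{R}^3)$ and $\gamma(x,\cdot)\in L^2_{\Qper,\div}(\mathbb{R}^3;\mathbb{R}^3)$ with zero mean for a.e.~$x\in\Omega\cap\Lambda$, such that
\[
\liminf_{\eps\to 0}\int_{\Omega_\eps\cap\Lambda}f\bigl(\tfrac{x}{\eps},\nabla u^\eps,B^\eps\bigr)\,dx\geq \int_{\Omega\cap\Lambda}\!\!\int_Q f\bigl(y,\lambda+\nabla_y\psi(x,y),B_0+\gamma(x,y)\bigr)\,dy\,dx.
\]
The rigidity condition $\operatorname{sym}(\lambda+\nabla_y\psi(x,y))=0$ on $\{\chi_M(y)=1\}$ passes to the limit because otherwise the energy would blow up via \eqref{f2}. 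Applying Lemma~\ref{lem:wlim-approx} converts $\psi(x,\cdot)$ into a zero-trace competitor on $Q$ while preserving rigidity, so the inner integral bounds $f_{hom}(\lambda,B_0)$ from above (single-cell case $k=1$). Integrating over $\Omega\cap\Lambda$ gives $|\Omega\cap\Lambda|f_{hom}(\lambda,B_0)$, the exterior contribution is handled by \eqref{fext2}, and the boundary-layer contribution is controlled by the coercivity in \eqref{f1}, yielding \eqref{eq:LinLiminf}.

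The main obstacle is the liminf step, specifically ensuring that the corrector $\psi(x,\cdot)$ extracted from the weak limit is a legitimate test function for $f_{hom}$: both rigidity on inclusions and the zero-trace/periodicity structure must be retained when converting a periodic corrector to a $W_0^{1,2}(Q)$ one, which is exactly what Lemma~\ref{lem:wlim-approx} accomplishes. On the magnetic side, preserving $\div$-freeness in both the periodic limit and the recovery sequence patching requires Lemma~\ref{lem:div-free-projection} and Lemma~\ref{lem:div-free-proj-R3}, with error estimates that vanish as $\eps\to 0$ thanks to the oscillation cancellation of periodic zero-mean fields.
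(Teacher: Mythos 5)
Your overall plan is in the right spirit (M\"uller's scheme adapted to preserve rigidity and divergence-freeness, Lemmas~\ref{lem:wlim-approx} and~\ref{lem:div-free-proj-R3} doing the heavy lifting), but there are genuine gaps in both halves.

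\textbf{Part (ii): the sharp-interface patching of $B^{\delta,\eps}$ does not work as claimed.} You set $B^{\delta,\eps}$ equal to $B_0+\hat\beta(x/\eps)$ inside $\Omega$ and to $B$ outside, and argue that the resulting divergence is a surface distribution on $\partial\Omega$ with density $\hat\beta(\cdot/\eps)\cdot n$ whose $(\mathring W^{1,2}(\R^3))^*$-norm vanishes ``by periodic averaging.'' But $\hat\beta$ is only $L^2_\Qper$, so it has no boundary trace and this surface distribution is not even well-defined; the distributional divergence is instead the functional $v\mapsto -\int_\Omega \hat\beta(x/\eps)\cdot\nabla v\,dx$. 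Bounding its $(\mathring W^{1,2})^*$-norm requires uniformity over $\{\nabla v: \|v\|_{\mathring W^{1,2}}\le 1\}$, which is bounded but \emph{not} compact in $L^2(\Omega;\R^3)$, so weak convergence $\hat\beta(\cdot/\eps)\rightharpoonup 0$ alone is insufficient. The paper avoids this exactly by inserting a smooth transition layer $\psi_r$ (of width $r$) in the definition of $\tilde B^{\eps,\delta}_r$: after integrating by parts against the smooth $\nabla\psi_r$, the term to control is $\int (\nabla\psi_r)\cdot\hat\beta(\cdot/\eps)\,\tilde v$, where the $\tilde v$'s live in a \emph{compact} subset of $L^2$ by Rellich. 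This is what makes $\|\div\tilde B^{\eps,\delta}_r\|_{(\mathring W^{1,2})^*}\to 0$ provable, after which a diagonalization $r=r(\eps)\to 0$ finishes the construction. You should adopt the layer and diagonalization; the sharp interface cannot be salvaged at this level of regularity. A smaller issue: your proposal to truncate $\varphi,\hat\beta$ to get equi-integrability risks breaking the rigidity constraint $\operatorname{sym}(\lambda+\nabla\varphi)=0$ on the inclusion, where $\nabla\varphi$ may be large; the paper needs no truncation, since for a fixed $L^2$ profile $\phi^\delta$, the rescaled family $|\nabla\phi^\delta(\cdot/(k^\delta\eps))|^2$ is automatically locally equi-integrable.

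\textbf{Part (i): the cell-averaging / two-scale lower bound is not valid for nonconvex $f$.} You invoke ``a cell-averaging argument \`a la \cite{Muller1987}, equivalent to two-scale convergence'' to produce correctors $\psi(x,\cdot),\gamma(x,\cdot)$ and the inequality $\liminf\int f(\tfrac{x}{\eps},\nabla u^\eps,B^\eps)\,dx\ge\int\!\!\int f(y,\lambda+\nabla_y\psi,B_0+\gamma)\,dy\,dx$. Two-scale compactness gives weak two-scale limits, and passing a \emph{nonconvex} integrand $f$ to the limit under that weak convergence requires convexity (Jensen) or an entirely different mechanism. M\"uller's argument is not two-scale convergence: it is a De Giorgi-type averaging over nested cut-off regions, directly testing against the multi-cell formula on a large cube of side $k\eps$. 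The paper follows exactly that route (Steps i.1, i.2): subdivide $\Omega$ into $\alpha$-cubes $\tilde Q$, on each fix the rigid-adapted modification $\hat\lambda^\eps$ of $\lambda x$ via Lemma~\ref{lem:wlim-approx}, interpolate $u^\eps$ to $\hat\lambda^\eps$ across a family of nested annuli $Q_i\setminus Q_{i-1}$, project the cut-off $B^{\eps,i}$ back to divergence-free periodic via Lemma~\ref{lem:div-free-projection}, average over the $\nu$ annuli to kill the bad term by $1/\nu$, and compare against the multi-cell infimum defining $f_{hom}$ on $\tilde Q^\eps=\eps(z_0+kQ)$. That yields the liminf with no convexity assumption. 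Your use of Lemma~\ref{lem:wlim-approx} at the end (to ``convert $\psi(x,\cdot)$ to a zero-trace competitor'') is also not what that lemma does --- it flattens the gradient on the inclusions and controls the $L^2$ norm, but does not turn periodic correctors into $W_0^{1,2}(Q)$ functions. The missing structural element in your liminf argument is M\"uller's De Giorgi averaging/cut-off step, which cannot be replaced by two-scale convergence here.
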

		\begin{proof}  For simplicity, we will only consider the case $\Lambda=\R^3$ below. 
			Restrictions of the functionals can be treated analogously. 
			
			\noindent{\bf(i) \emph{$\Gamma-\liminf$-inequality}:} \\
			\noindent{\bf Step i.1: Approximating $\Omega$ from inside with a cubical grid.}
			Let $u^{\varepsilon}\rightharpoonup \lambda x$ weakly in $W^{1,2}(\Omega;\R^3)$ and $B^{\varepsilon}\rightharpoonup B_0$ in $L^{2}_{\div}(\R^3;\R^3)$.
			Fix $\alpha>0$ and choose a cubical $\alpha$-grid subdividing $\R^3$ with open, pairwise disjoint cubes of side length $\alpha$, i.e., we can write
			\begin{equation*}
				\R^3=\bigcup_{\tilde{Q}\in \mathcal{G}_\alpha} \tilde{Q}+R,
			\end{equation*}
			where $R$ is a set of measure zero. 
			There is a constant $C=C(\Omega)>0$ (using that $\Omega$ is a Lipschitz domain)  such that
			for all $0<\alpha\leq 1$,
			\begin{equation*}
				\Big|\Omega\setminus \textstyle\bigcup_{\tilde{Q}\in \mathcal{G}_{\alpha,\Omega}} \tilde{Q}\Big|\leq C\alpha,~~~\text{where}~~
				\mathcal{G}_{\alpha,\Omega}:=\{\tilde{Q}\in \mathcal{G}_{\alpha}\mid \tilde{Q}\subset \Omega,~\operatorname{Dist(\tilde{Q};\partial\Omega)}\geq \alpha\}.
			\end{equation*} 
			Notice that long as $\eps\leq\alpha$, we also have 
			that 
			\[
				\bigcup_{\tilde{Q}\in \mathcal{G}_{\alpha,\Omega}} \tilde{Q}\subset \Omega_\eps.
			\] 
			By the lower bound on $f$ and $f_b$ in \eqref{f1} we can write
			\begin{equation}\label{eq:covering-by-tildeQs}
				\begin{aligned}
					& \int_{\Omega\setminus\Omega_\eps} f_b\bigg(\frac{x}{\eps},u^{\varepsilon}, B^{\varepsilon}\bigg) dx
					+\int_{\Omega_\eps} f\bigg(\frac{x}{\eps},u^{\varepsilon}, B^{\varepsilon}\bigg) dx \\
					&\geq \sum_{\tilde{Q}\in\mathcal{G}_{\alpha,\Omega}} \int_{\tilde{Q}} f\bigg(\frac{x}{\eps},u^{\varepsilon,i}, B^{\varepsilon}\bigg) dx 
					- \int_{\Omega\setminus \textstyle\bigcup_{\tilde{Q}\in \mathcal{G}_{\alpha,\Omega}} \tilde{Q}} C \,dx \\
					&\geq \sum_{\tilde{Q}\in\mathcal{G}_{\alpha,\Omega}} \int_{\tilde{Q}} f\bigg(\frac{x}{\eps},u^{\varepsilon,i}, B^{\varepsilon}\bigg) dx
					-C^2 \alpha.
				\end{aligned}
			\end{equation}
			Since $\alpha>0$ is arbitrary and the contribution $f_{ext}$ was assumed to be $L^2$-weakly lower semicontinuous on $\R^3\setminus\Omega$, it now suffices to show the $\liminf$ inequality on each cube $\tilde{Q}\in \mathcal{G}_{\alpha,\Omega}$.
			Let us fix one such $\tilde{Q}$.
			
			\noindent{\bf Step i.2: The $\Gamma-\liminf$ on a cube $\tilde{Q}\subset\subset \Omega_\eps$.}\\
			Choose the largest $k \in \mathbb{N}$ such that $k \varepsilon \leq \alpha - 2\varepsilon$ (recall that $\alpha$ is the side length of $\tilde{Q}$).   
			There exists $z_{0} \in \mathbb{Z}^3$ such that
			\begin{equation} \label{eq:defQeps}
				\tilde{Q}^{\varepsilon} := \varepsilon (z_{0} + k Q)\subset \tilde{Q}.
			\end{equation}
			Notice that $\tilde{Q}^{\eps}$ is essentially the largest possible multicell that is still fully contained in $\tilde{Q}$. 
			
			We now intend to modify the boundary values of $u^\eps$ on $\tilde{Q}^{\eps}$ in order to later exploit 
			the definition of the infimum in the multi-cell formula \eqref{eq:deffhom}.
			For this, we need a cut-off argument which plays a crucial role in the proof. 
			Ultimately, we will need that after cut-off of $u^\eps$, the new function function will have the boundary values $\lambda x$ on $\partial Q^\eps$.
			We cannot, however, use straightforward interpolation of $u^\eps$ and $\lambda x$, because this would break the rigidity constraint on $M$.
			Instead, we will first modify the affine function $\lambda x$.
			For this, we apply Lemma~\ref{lem:wlim-approx} 
			with $w(x):=\lambda x$ and a domain $\Lambda$ such that $\tilde{Q}\subset\subset \Lambda\subset \Omega$.
			This gives functions $\hat{\lambda}^\eps:=\hat{w}^\eps$ such that
			$\hat{\lambda}^\eps\rightharpoonup w=\lambda x$ in $W^{1,2}$, $\hat{\lambda}^\eps=\lambda x$ on $\partial Q^\eps$ 
			(actually even on the boundary of all single cells contained in $\tilde{Q}$) and $\nabla \hat{\lambda}^\eps=0$ on $\tilde{Q}\cap \eps M \big\}$ as long as $\eps$ is small enough. 
			(Here, the last assertion is the reason for introducing the slightly larger set $\Lambda$, because the lemma does not give 
			$\nabla \hat{\lambda}^\eps=0$ fully up to the boundary of $\Lambda$.)

			Let us consider a $Q_0$ compactly embedded and open in $\tilde{Q}$ and define
			\begin{equation}
				\eta := \frac{1}{2} d(Q_0, \tilde{Q}) \ \ \mbox{and} \ \ Q_i= \bigg\{ x \in Q \bigg| d(x, Q_0) \leq \frac{i}{\nu} \eta  \bigg\}, \ \mbox{ where $i=1, \dots, \nu$}
			\end{equation} 
			Let us remark that $Q_0 \subset \subset Q_1 \subset \subset \dots \subset \subset Q_\nu \subset \subset \tilde{Q}$. We define scalar functions  $\varphi_i^\eps,\psi_i \in C^{\infty}_{c}(\tilde{Q})$ such that $0 \leq \varphi_i^\eps \leq 1$, 
			$0\leq \psi_i \leq 1$,
			$\varphi_i^\eps=\psi_i=1$ in $Q_{i-1}$, $\varphi_i^\eps=\psi_i=0$ in $\tilde{Q} \setminus Q_i$, 
			\begin{equation}\label{eq:phii-const-on-inc}
				\nabla \varphi_i^\eps=0~~\text{on}~~\Big\{x\in \tilde{Q}\,\Big|\,\chi_M\big(\tfrac{x}{\eps}\big)=1\Big\}~~~
				\text{and on}~~\tilde{Q},~~~|\nabla \varphi_i^\eps| \leq \frac{(\nu+1)}{\sigma \eta},~~~|\nabla \psi_i| \leq \frac{(\nu+1)}{\eta},
			\end{equation} 
			where $\sigma$ is the distance of the inclusions to the boundary of a reference cell, i.e., $\sigma:=\operatorname{dist}(Q\cap \{\chi_M=1\};\partial Q)$. We define
			\begin{equation*}
				u^{\varepsilon, i}(x) := \hat{\lambda}_{\varepsilon}(x) + \varphi_i^\eps(x) (u^{\varepsilon}(x) - \hat{\lambda}_{\varepsilon} (x))
			\end{equation*}
			and
			\begin{equation*}
				B^{\varepsilon, i} := \psi_i(B_{\varepsilon} - B_{0})+B_0.
			\end{equation*}
			As defined, $B^{\varepsilon, i}$ is not divergence free, but this we can achieve using the projection of
			Lemma \ref{lem:div-free-projection} on the cube $\tilde{Q}$, by setting $\tilde{B}^{\varepsilon, i} := \mathcal{P}(B^{\varepsilon, i}-B_0)+B_0$.
			The error introduced by projecting satisfies
			\begin{equation}\label{eq:Bprojerror}
				\| \tilde{B}^{\varepsilon, i} - B^{\varepsilon, i} \|_{L^2} \leq C \| div B^{\varepsilon, i} \|_{(W^{1,2}_{\Qper})^{*}}
				\underset{\eps\to 0}{\longrightarrow}0. 
			\end{equation}	
			Here, the latter holds because 
			\begin{equation*}
				div B^{\varepsilon, i} = (\nabla \psi_i) (B_{\varepsilon} - B_{0}) + \psi_i \  div (B_{\varepsilon} - B_{0}) + \div B_0 = (\nabla \psi_i) (B_{\varepsilon} - B_{0})
			\end{equation*}
			which  for fixed $\psi_i$ converges weakly to zero  in $L^2$ and thus strongly in $(W^{1,2}_{\tQper})^{*}$.
			
			We now have to link $E_\eps(u^{\varepsilon, i},\tilde{B}^{\varepsilon, i};\tilde{Q})$ to $E_\eps(u^{\varepsilon},B^{\varepsilon};\tilde{Q})$ 
			(recall \eqref{Edomains} for the definition of these energies restricted to cubes).
			Decompose the former as follows:
			\begin{align} \label{eq-aux0}
				\begin{aligned}
					\int_{\tilde{Q}}  f\bigg(\frac{x}{\eps},\nabla u^{\varepsilon, i}, \tilde{B}^{\varepsilon, i}\bigg) dx  
					& =  \int_{Q_{i-1}} f\bigg(\frac{x}{\eps},\nabla u^{\varepsilon}, \tilde{B}^{\varepsilon, i}\bigg) dx   
					+  \int_{Q_{i} \setminus Q_{i-1}} f \bigg(\frac{x}{\eps},\nabla u^{\varepsilon, i}, \tilde{B}^{\varepsilon, i}\bigg) dx  \\ 
					& + \int_{\tilde{Q} \setminus Q_{i}}  f\bigg(\frac{x}{\eps},\nabla \hat\lambda^{\varepsilon}, \tilde{B}^{\varepsilon, i}\bigg) dx
				\end{aligned}
			\end{align}
			By the uniform estimate of $\nabla\hat\lambda^{\eps}=\nabla\hat w^{\eps}$ in terms of $\nabla w=\lambda$ of Lemma \ref{lem:wlim-approx},
			we know that 
			\[
			\|\nabla\hat\lambda^{\eps} \|_{L^\infty}\leq C |{\lambda}|. 
			\]
			From the upper bound in \eqref{f1}, also using that $B^{\varepsilon, i}=B_0$ on $\tilde{Q} \setminus Q_{i}$, 
			we thus immediately obtain that
			\begin{equation} \label{eq-aux1}
				\begin{aligned}
					& \int_{\tilde{Q} \setminus Q_{i}} f\bigg(\frac{x}{\eps},\nabla \hat\lambda^{\varepsilon}, \tilde{B}^{\varepsilon, i}\bigg) dx \\
					&\leq C(1+|\lambda|^2) |\tilde{Q} \setminus Q_{i}|
					+ C\int_{\tilde{Q} \setminus Q_{i}} |\tilde{B}^{\varepsilon, i}|^2\,dx  \\
					&\leq C(1+|\lambda|^2) |\tilde{Q} \setminus Q_{i}|
					+ 2C|\tilde{Q} \setminus Q_{i}||B_0|^2+2C\int_{\tilde{Q} \setminus Q_{i}}|\tilde{B}^{\varepsilon, i}-B^{\varepsilon, i}|^2\,dx
				\end{aligned}
			\end{equation}
			In addition, observing that $\nabla u^{\varepsilon, i}= \nabla \hat{\lambda}_{\varepsilon}(x) + (\nabla \varphi_i^\eps(x)) (u^{\varepsilon} - \hat{\lambda}_{\varepsilon}(x)) + \varphi_i^\eps (\nabla u^{\varepsilon}- \nabla \hat{\lambda}_{\varepsilon})$, we also get that

		\begin{equation} 
			\begin{aligned}
				&\int_{Q_{i} \setminus Q_{i-1}} f\bigg(\frac{x}{\eps},\nabla u^{\varepsilon, i}, \tilde{B}^{\varepsilon, i}\bigg)\,dx
				\\
				& \leq \begin{aligned}[t] 
					&\int_{Q_{i} \setminus Q_{i-1}} C \Big(1+|\lambda| +  (u^{\varepsilon} - \hat\lambda_{\varepsilon}(x))\otimes \nabla \varphi_i^\eps(x) 
					+ \varphi_i^\eps (\nabla u^{\varepsilon}- \nabla \hat{\lambda}_{\varepsilon}(x))|^2\Big)\,dx \\ 
					&+2C \int_{Q_{i} \setminus Q_{i-1}} |\tilde{B}^{\varepsilon, i}-B^{\varepsilon, i}|^2\,dx
					+2C \int_{Q_{i} \setminus Q_{i-1}} |B^{\varepsilon, i}|^2\,dx
				\end{aligned}\\
				& \leq \begin{aligned}[t] 
					&\int_{Q_{i} \setminus Q_{i-1}} C \Big(1+|\lambda| + (\nabla \varphi_i^\eps(x)) (u^{\varepsilon} - \hat\lambda_{\varepsilon}(x)) 
					+ \varphi_i^\eps (\nabla u^{\varepsilon}- \nabla \hat{\lambda}_{\varepsilon}(x))|^2\Big)\,dx \\ 
					&+2C \int_{Q_{i} \setminus Q_{i-1}} |\tilde{B}^{\varepsilon, i}-B^{\varepsilon, i}|^2\,dx
					+4C |Q_{i} \setminus Q_{i-1}| |B_0|^2
					+4C \int_{Q_{i} \setminus Q_{i-1}} |\psi_i (B^\eps-B_0)|^2\,dx
				\end{aligned}
			\end{aligned} \label{eq-aux2}
		\end{equation}
		Finally, recalling that $B^{\varepsilon}=B^{\varepsilon,i}$ on $Q_{i-1}$, we can 
		exploit \eqref{f3} to estimate the ``inner'' term in \eqref{eq-aux0} as
		\begin{equation} \label{eq-aux3}
			\begin{aligned}
				\int_{Q_{i-1}} f\bigg(\frac{x}{\eps},\nabla u^{\varepsilon, i}, \tilde{B}^{\varepsilon, i}\bigg)\,dx 
				& \leq 
				\begin{aligned}[t] & \int_{Q_{i-1}} f\bigg(\frac{x}{\eps},\nabla u^{\varepsilon, i}, B^{\varepsilon}\bigg)\,dx
					+ \hat{C}(\nu) \|\tilde{B}^{\varepsilon, i}-B^{\varepsilon, i}\big\|_{L^2(Q_{i-1};\R^3)},
				\end{aligned}
			\end{aligned}
		\end{equation}
		where 
		\begin{equation*} 
			\begin{aligned}
				\hat{C}(\nu):=\max_{i} \sup_{0<\eps<1} C \Big(1+\big\|\nabla u^{\varepsilon, i}\big\|_{L^2(Q_{i-1};\R^3)}
				+\big\|B^{\varepsilon, i}\big\|_{L^2(Q_{i-1};\R^3)}
				+ \big\|\tilde{B}^{\varepsilon, i}\big\|_{L^2(Q_{i-1};\R^3)}\Big)<\infty
			\end{aligned}
		\end{equation*}
		since $\nabla \varphi_i^\eps(x)$, $B^{\varepsilon, i}$ and $\tilde{B}^{\varepsilon, i}$ are bounded for fixed $\nu$.
		Since $\tilde{B}^{\eps, i}-B^{\eps, i}\to 0$ as $\eps\to 0$ in $L^2$ by \eqref{eq:Bprojerror},
		we can plug \eqref{eq-aux1}--\eqref{eq-aux3} into \eqref{eq-aux0}. We thus infer that
		for any subsequence of $\eps$ (not relabeled) such that the following limit exists, 
		\begin{align*}
			& \lim_{\varepsilon \rightarrow 0}\int_{\tilde{Q}} f\bigg(\frac{x}{\eps},\nabla u^{\varepsilon, i}, \tilde{B}^{\varepsilon, i}\bigg) dx  \\ 
			& \begin{aligned}[t]
				& \leq \liminf_{\varepsilon \rightarrow 0}\int_{Q_{i-1}} f\bigg(\frac{x}{\eps},\nabla u^{\varepsilon}, B^{\varepsilon}\bigg) dx  
				+\tilde{C} \int_{Q_{i} \setminus Q_{i-1}} \big(|\nabla u^{\varepsilon}- \nabla \hat{\lambda}_{\varepsilon}|^2+|B^\eps-B_0|^2\big)\,dx
				+ \tilde{C}| \tilde{Q} \setminus Q_{i}|
				\\
				& \leq \liminf_{\varepsilon \rightarrow 0}\int_{\tilde{Q}} f\bigg(\frac{x}{\eps},\nabla u^{\varepsilon}, B^{\varepsilon}\bigg) dx 
				+\tilde{C} \int_{Q_{i} \setminus Q_{i-1}} \big(|\nabla u^{\varepsilon}- \nabla \hat{\lambda}_{\varepsilon}|^2+|B^\eps-B_0|^2\big)\,dx
				+ 2\tilde{C}| \tilde{Q} \setminus Q_{0}|.
			\end{aligned}
		\end{align*}
		Here, $\tilde{C}:=\max\{C(1+|\lambda|^2),6C |B_0|^2\}$
		and for the last inequality we also used the lower bound in \eqref{f1}.
		Summing over $i$ and dividing by $\nu$, we obtain that
		\begin{equation*}
			\liminf_{\eps \rightarrow 0} \frac{1}{\nu}\sum_i^\nu E^{\eps}(u^{\varepsilon, i}, B^{\varepsilon,i};\tilde{Q}) \leq 
			\begin{aligned}[t]
				&\liminf_{\varepsilon \rightarrow 0} 
				\int_{\tilde{Q}} f\bigg(\frac{x}{\eps},\nabla u^{\varepsilon}, B^{\varepsilon}\bigg)dx + 2\tilde{C} | \tilde{Q} \setminus Q_0| \\
				& + \limsup_{\varepsilon \rightarrow 0}  \frac{\tilde{C}}{\nu} \int_{\tilde{Q} } 
				\big(|\nabla u^{\varepsilon}- \nabla \hat{\lambda}_{\varepsilon}(x)|^2+|B^\eps-B_0|^2 \big)\,dx .
			\end{aligned}
		\end{equation*}
		Since $\| \nabla u^{\varepsilon} - \nabla \hat{\lambda}_{\varepsilon}(x) \|_{L^2}+\| B^{\varepsilon} - B_0 \|_{L^2}$
		is bounded, we conclude that
		\begin{equation} \label{eq:liminf-average}
			\lim_{|\tilde{Q}\setminus Q_0|\to 0}\lim_{\nu\to\infty} \liminf_{\eps \rightarrow 0} 
			\frac{1}{\nu}\sum_{i=1}^\nu  E^{\eps}(u^{\varepsilon, i}, B^{\varepsilon,i};\tilde{Q}) 
			\leq \liminf_{\varepsilon \rightarrow 0} E(u^\eps,B^\eps;\tilde{Q}).
		\end{equation}
		It now suffice to show that for each $i$,
		\begin{equation} \label{eq:liminf-on-tildeQ}
			E(\lambda x, B_0;\tilde{Q}) \leq \liminf_{\varepsilon \rightarrow 0}E^{\varepsilon}(u^{\varepsilon, i}, B^{\varepsilon,i};\tilde{Q}).
		\end{equation}
		For the proof of \eqref{eq:liminf-on-tildeQ}, by the lower bound on $f$ in \eqref{f1} we can write
		\begin{equation} \label{eq-aux10}
			\int_{\tilde{Q}} f\bigg(\frac{x}{\eps},\nabla u^{\varepsilon,i}, B^{\varepsilon,i}\bigg) dx 
			\geq \int_{\tilde{Q}^{\eps}} f\bigg(\frac{x}{\eps},\nabla u^{\varepsilon,i}, B^{\varepsilon,i}\bigg) dx - \int_{\tilde{Q}^{\eps}\setminus\tilde{Q}} C \,dx
		\end{equation}
		The second integral converges to zero as $\varepsilon \rightarrow 0$. Using the notation of \eqref{eq:defQeps},
		the substitution $x=\varepsilon(z_0 + y)$ and setting 
		$z^{\varepsilon} = u^{\varepsilon,i}-\lambda x \in W^{1,p}_{0}(\tilde{Q}^{\eps})$ and $\beta:=B^{\varepsilon,i}-B_0$, we see that
		\begin{align}\label{eq-aux11}
			\begin{aligned}
				\int_{\tilde{Q}^{\eps}} f\bigg(\frac{x}{\eps},\nabla u^{\varepsilon,i}, B^{\varepsilon,i}\bigg) dx 
				& = \varepsilon^3 \int_{kQ} f\bigg(y,\lambda + \nabla z^{\varepsilon} (\varepsilon(z_0 + y)) , B_{0} 
				+ \beta \big( \varepsilon(z_0 + y) \big)\bigg) \,dy \\
				& \geq (\varepsilon k)^3 f_{hom} (\lambda, B)
			\end{aligned}
		\end{align} 
		by the definition of $f_{hom}$ in \eqref{eq:deffhom}.
		Since $(\eps k)^3=(\eps k(\eps,\alpha))^3\to \alpha^3=|\tilde Q|$ as $\eps\to 0$, we can combine \eqref{eq-aux10} and \eqref{eq-aux11} to obtain \eqref{eq:liminf-on-tildeQ}. 
		In view of \eqref{eq:liminf-average}, the assertion follows from \eqref{eq:covering-by-tildeQs} in the limit as $\alpha\to 0$.		
		{\bf (ii) Almost recovery sequence}~\\
		{\bf Step ii.1: $\delta$-almost recovery sequence with errors from a transition layer.}
		Fix $\delta>0$ and choose $k^\delta\in \N$, $\phi^\delta\in W^{1,2}_{\Qper}(\R^3;\R^3)$, $\beta_0^\delta\in L^2_{\Qper,\div}(\R^3;\mathbb R^3)$ (almost) optimal with error at most $\delta$ for the infimum in \eqref{eq:deffhom}, the definition of $f_{hom}$.
		Define
		\begin{equation} \label{eq:urec}
			u^{\eps,\delta}  := \lambda x + k^\delta\varepsilon \phi^\delta\big( \frac{x}{k^\delta\varepsilon} \big),\quad x\in\Omega,
		\end{equation}
		and, with a parameter $0<r\leq 1$ that will govern an transition layer outside of $\partial\Omega$ of width $r$,
		\begin{equation} \label{eq:Brec-1}
			\tilde{B}^{\eps,\delta}_r(x):=
			(1-\psi_r(x)) B(x) + \psi_r(x) \Big(B(x) + \beta_{0}^\delta \big( \frac{x}{k^\delta\varepsilon} \big)\Big),\quad x\in \R^3.
		\end{equation}
		Here, for each $r$, $\psi_r\in C^\infty_c(\R^3;[0,1])$ denotes a ``cut-off''-function such that 
		\[
		\text{$\psi_r=1$ on $\Omega$},\quad \operatorname{supp}(\psi_r)\subset \Omega^{(r)}:=\{x\in\R^3\mid \operatorname{dist}(x;\Omega)<1\}.
		\]
		Notice that as defined, $\tilde{B}^{\eps,\delta}_r=B_0+\beta_{0}^\delta \big( \frac{x}{k^{\delta} \varepsilon} \big)$ on $\Omega$ and
		$\tilde{B}^{\eps,\delta}_r\rightharpoonup B$ for each $r>0$. 
		Moreover, since $\beta_{0}^\delta$ is a fixed function in $L^2_\Qper$, it is not difficult to see that the family $\big(\big|\beta_{0}^\delta \big( \frac{\cdot}{k^{\delta}\varepsilon} \big)\big|^2\big)_{0<\eps\leq 1}$ 
		is locally equi-integrable on $\R^3$,  and therefore also $(\big|\tilde{B}^{\eps,\delta}_r\big|^2)_\eps$ {is locally equi-integrable}, uniformly in $r$ for each fixed $\delta>0$. Similarly, $(\big|\nabla u^{\eps,\delta}\big|^2)_\eps$ is equi-integrable on $\Omega$ for fixed $\delta$. 
		As a consequence, 
		we have that
		\begin{equation} \label{eq:Brec-strongcovnergence}
			\sup_{0<\eps\leq 1}\int_{\R^3\setminus \Omega} |\tilde{B}^{\eps,\delta}_{r}-B|^2\,dx \leq \sup_{0<\eps\leq 1}\int_{\Omega^{(r)}\setminus \Omega} 
			\Big|\beta_{0}^\delta \big( \frac{x}{k^{\delta} \varepsilon} \big)\Big|^2\,dx\underset{r\to 0}{\longrightarrow}0.
		\end{equation} 
		However, $\tilde{B}^{\eps,\delta}_r\in L^2(\R^3;\R^3)$ is in general not divergence-free due to the transition near $\partial\Omega$ created by $\psi_r$ in its definition. Ultimately, we therefore have to further modify it using the projection $\mathcal{P}$ of Lemma~\ref{lem:div-free-proj-R3}.
		
		{\bf Step ii.2: Error estimate for the projection to divergence-free fields and choice of a diagonal sequence with vanishing transition layer.}
		As stated in Lemma~\ref{lem:div-free-proj-R3}, the projection error satisfies
		\begin{equation} \label{eq-aux20}
			\|\cP\tilde{B}^{\eps,\delta}_r-\tilde{B}^{\eps,\delta}_r\|_{L^2(\R^3;\R^3)}\leq C \| \div \tilde{B}^{\eps,\delta}_r \|_{(\mathring W^{1,2}(\R^3))^*}.
		\end{equation}
		To control it, it suffices to show that $\div \tilde{B}^{\eps,\delta}_r\to 0$ as $\eps\to 0$ strongly in $(\mathring W^{1,2}(\R^3))^*$. As to the latter, observe that for all $v\in \mathring W^{1,2}(\R^3)$,
		\begin{equation} \label{eq-aux21}
			\int_{R^3} \tilde{B}^{\eps,\delta}_r\cdot \nabla v\,dx
			=\int_{R^3} B\cdot \nabla v\,dx+\int_{R^3} \psi_r\beta_{0}^\delta \big( \frac{x}{k^{\delta} \varepsilon} \big) \cdot \nabla v\,dx=
			\int_{\Omega^{(1)}} \psi_r\beta_{0}^\delta \big( \frac{x}{k^{\delta} \varepsilon} \big) \cdot \nabla v\,dx,
		\end{equation}
		since $\div B=0$ and $\psi_r$ is supported in $\Omega^{(1)}$. For each $\eps>0$, choose a sequence $(v_{\eps,k})_k\subset \mathring W^{1,2}(\R^3)$
		with $\|v_{\eps,k}\|_{\mathring W^{1,2}(\R^3)}\leq 1$ for all $\eps,k$ such that
		\begin{equation*}
			\| \div \tilde{B}^{\eps,\delta}_r \|_{(\mathring W^{1,2}(\R^3))^*}
			=\sup_{\|v\|_{\mathring W^{1,2}(\R^3)}\leq 1} \Big|\int_{\R^3} \tilde{B}^{\eps,\delta}_r\cdot \nabla v\,dx\Big|
			=\lim_{k\to\infty} \int_{\R^3} \tilde{B}^{\eps,\delta}_r\cdot \nabla v_{\eps,k}\,dx
		\end{equation*}
		With $\tilde{v}_{\eps,k}:=v_{\eps,k}-\frac{1}{|\Omega^{(1)}|}\int_{\Omega^{(1)}}v_{\eps,k}\,dx \in \mathring W^{1,2}(\Omega^{(1)})$
		(in fact, we even have that $\tilde{v}_{\eps,k}\in W^{1,2}(\Omega^{(1)})$ by Poincar\'e's inequality) we further get that
		\begin{equation*}
			\| \div \tilde{B}^{\eps,\delta}_r \|_{(\mathring W^{1,2}(\R^3))^*}
			=\lim_{k\to\infty} \int_{\Omega^{(1)}}\psi\beta_{0}^\delta \big( \frac{x}{k^{\delta} \varepsilon} \big) \cdot \nabla \tilde{v}_{\eps,k}\,dx
		\end{equation*}
		Since $\div \beta_0=0$ and $\psi$ is compactly supported in $\Omega^{(1)}$, we can integrate by parts and obtain that
		\begin{equation*}
			\| \div \tilde{B}^{\eps,\delta}_r \|_{(\mathring W^{1,2}(\R^3))^*}
			=\lim_{k\to\infty} -\int_{\Omega^{(1)}} (\nabla \psi)\cdot \beta_{0}^\delta \big( \frac{x}{k^{\delta} \varepsilon} \big) \tilde{v}_{\eps,k}\,dx
			=:a_{\eps,k}.
		\end{equation*}
		By the compact embedding of $W^{1,2}(\Omega^{(1)})$ into $L^2(\Omega^{(1)})$,
		the set $\{\tilde{v}_{\eps,k}\mid \eps>0,~k\in \N\}$ is compact in $L^2(\Omega^{(1)})$.
		Choose a sequence $\eps_j\to 0$ such that 
		$\lim_{j\to\infty}\| \div \tilde{B}^{\eps_j} \|_{(\mathring W^{1,2}(\R^3))^*}
		=\limsup_{\eps\to 0}\| \div \tilde{B}^{\eps} \|_{(\mathring W^{1,2}(\R^3))^*}$
		and $k(j)\in\N$ such that $a_{\eps_j,k(j)}\leq \lim_k a_{\eps_j,k}+\frac{1}{j}$.
		Then
		\begin{equation*} 
			\| \div \tilde{B}^{\eps,\delta}_r \|_{(\mathring W^{1,2}(\R^3))^*}
			=\lim_{j\to\infty} a_{\eps_j,k(j)}
			=\lim_{j\to\infty} -\int_{\Omega^{(1)}} (\nabla \psi)\cdot \beta_{0}^\delta \big( \frac{x}{k^{\delta} \varepsilon} \big) \tilde{v}_{\eps_j,k(j)}\,dx
		\end{equation*}
		Passing to another subsequence if necessary, $\tilde{v}_{\eps_j,k(j)}\to v$ strongly in $L^2(\Omega^{(1)})$, with a suitable limit $v$.
		In addition, we know that $\beta_{0}^\delta \big( \frac{x}{k^{\delta}\varepsilon} \big)\rightharpoonup \int_Q \beta_{0}^\delta(z)\,dz=0$ 
		by the Riemann-Lebesgue lemma. Altogether,
		\begin{equation} \label{eq-aux25}
			\limsup_{\eps\to 0}\| \div \tilde{B}^{\eps,\delta}_r \|_{(\mathring W^{1,2}(\R^3))^*}
			\leq\lim_{j\to\infty} -\int_{\Omega^{(1)}} (\nabla \psi)\cdot \beta_{0}^\delta \big( \frac{x}{k^{\delta}\varepsilon} \big) \tilde{v}_{\eps_j,k(j)}\,dx=0.
		\end{equation}
		We now define
		\begin{equation} \label{eq:defBrecaffine}
			B^{\eps,\delta}:=B^{\eps,\delta}_{r(\eps)}=\cP \tilde{B}^{\eps,\delta}_{r(\eps)} \in L^2_{\div}(\R^3;\R^3),
		\end{equation}
		where $r(\eps)\to 0$ is chosen slow enough so that 
		\begin{equation}\label{eq:conv-divB}
			\lim_{\eps\to 0}\| \div \tilde{B}^{\eps,\delta}_{r(\eps)} \|_{(\mathring W^{1,2}(\R^3))^*}\to 0	
		\end{equation}
		which is possible due to \eqref{eq-aux25}.
		Due to \eqref{eq-aux20}, this implies that
		\begin{equation} \label{eq-aux26}
			B^{\eps,\delta}-\tilde{B}^{\eps,\delta}_{r(\eps)} \underset{\eps\to 0}{\longrightarrow} 0\quad\text{strongly in }L^2(\R^3;\R^3),
		\end{equation}
		for every fixed $\delta>0$.  
		In addition, since $\phi^\delta$ and $\beta_0^\delta$ are periodic functions with zero average, we have that 
		$(u^{\eps,\delta}, \tilde{B}^{\eps,\delta}_{r(\eps)}) \rightharpoonup(\lambda x, B_{0})$ as $\varepsilon \rightarrow 0$, weakly in 
		$W^{1,2}(\Omega;\R^3)\times L^2(\Omega;\R^3)$. 
		In view of \eqref{eq:Brec-strongcovnergence}, \eqref{eq-aux26} yields that 
		$B^{\eps,\delta}\to B$ strongly in $L^2(\R^3\setminus \Omega;\R^3)$, and, since $B=B_0$ on $\Omega$, also
		$B^{\eps,\delta}\rightharpoonup B$ weakly $L^2(\R^3;\R^3)$. 
		
		\noindent{\bf Step ii.3: Estimating the energy along $(u^{\eps,\delta},B^{\eps,\delta})$.}
		Because of the periodicity of $f$, the function
		\begin{equation*}
			x\mapsto f \bigg( \frac{x}{\varepsilon}, \lambda + \nabla \phi^{\delta} \big( \frac{x}{k^\delta\eps} \big)  , B_{0} + \beta_0^\delta \big( \frac{x}{k^\delta\eps} \big)\bigg)
		\end{equation*}
		is periodic on cubes of side $k^\delta\eps$, hence it converges weakly to its average. By a change of variables $z=x/(\eps k^\delta)$, which maps the cube $\eps k^\delta Q$ to the cube $Q$, and by invoking the definition of $\phi^\delta$ and $\beta_0^\delta$, we see that this average is estimated as follows:
		\begin{equation*}
			\begin{aligned}
				\frac 1 {(\eps k^\delta)^3}\int_{\eps k^\delta Q}	f \bigg( \frac{x}{\varepsilon}, \lambda + \nabla \phi^{\delta} \big( \frac{x}{k^\delta\eps} \big)  , B_{0} + \beta_0^\delta \big( \frac{x}{k^\delta\eps} \big)\bigg)dx&=
				\int_{Q}	f \big( kz, \lambda + \nabla \phi^{\delta} (z)  , B_{0} + \beta_0^\delta(z)\big)dz\\
				&\le f_{hom}(\lambda,B_0)+\delta.
			\end{aligned}
		\end{equation*}
		Then, since $\tilde B^{\eps,\delta}_{r(\eps)}$ coincides with $B_0+\beta_0^\delta(x/(k^\delta\eps))$ in $\Omega$, the growth conditions \eqref{f1} on $f_b$ and \eqref{f1}, \eqref{f2} on $f$, the $2$-equi-integrability
		of $\nabla u^{\eps,\delta}$ and $\tilde B^{\eps,\delta}_{r(\eps)}$ on $\Omega$ and the	
		above-mentioned convergence yield that
		\begin{equation}\label{eq:limsup}
			\begin{aligned}
				& \limsup_{\eps\to 0} E_{\varepsilon}(u^{\eps,\delta}, \tilde{B}^{\eps,\delta}_{r(\eps)} ;\Omega ) \\
				& \leq 
				\limsup_{\eps\to 0}\int_{\Omega\setminus\Omega_\eps} 	f_b \Big( \frac{x}{\varepsilon}, \nabla u^{\eps,\delta}, \tilde B^{\eps,\delta}_{r(\eps)}\Big)\,dx
				+\limsup_{\eps\to 0}\int_{\Omega_\eps} 	f \Big( \frac{x}{\varepsilon}, \nabla u^{\eps,\delta}, \tilde B^{\eps,\delta}_{r(\eps)}\Big)\,dx 
				\\ 
				 & \leq \limsup_{\eps\to 0} C \int_{\Omega\setminus\Omega_\eps} (1+|\nabla u^{\eps,\delta}|^2+|\tilde B^{\eps,\delta}_{r(\eps)}|^2)\,dx
				+\limsup_{\eps\to 0}\int_{\Omega} 	f \Big( \frac{x}{\varepsilon}, \nabla u^{\eps,\delta}, \tilde B^{\eps,\delta}_{r(\eps)}\Big)\,dx 
				\\ 
				&=\limsup_{\eps\to 0}\int_{\Omega} 	f \Big( \frac{x}{\varepsilon}, \nabla u^{\eps,\delta}, \tilde B^{\eps,\delta}_{r(\eps)}\Big)\,dx 
				\\
				&=\limsup_{\eps\to 0}\int_\Omega 	f \bigg( \frac{x}{\varepsilon}, \lambda + \nabla \phi^{\delta} \big( \frac{x}{k^\delta\eps} \big)  , B_{0} + \beta_0^\delta \big( \frac{x}{k^\delta\eps} \big)\bigg)\,dx\\
				&\le |\Omega|f_{hom}(\lambda,B_0)+|\Omega|\delta = E_{hom}(\lambda,B_0;\Omega )+|\Omega|\delta.
			\end{aligned}
		\end{equation}
		Next, we estimate the error in plugging the projection of the magnetic fields:
		\begin{equation}\label{eq:lim}
			\begin{aligned}
				&|E_\eps(u^{\eps,\delta},B^{\eps,\delta};\Omega)-E_\eps(u^{\eps,\delta},\tilde B^{\eps,\delta}_{r(\eps)};\Omega)|\\
				&\qquad \le |\Omega|C\Big(\|\nabla u^{\eps,\delta}\|_{L^2(\Omega;\R^{3\times 3})}+\|\tilde B^{\eps,\delta}_{r(\eps)}\|_{L^2(\Omega;\R^3)}+\|B^{\eps,\delta}\|_{L^2(\Omega;\R^3)}\Big)\|B^{\eps,\delta}-\tilde B^{\eps,\delta}_{r(\eps)}\|_{L^2(\Omega;\R^3)}
				\\
				&\qquad \le C\|\div B^{\varepsilon,\delta}_{r(\eps)}\|_{ (W^{1,2}(\R^3))^* }\to 0\qquad\text{as }\varepsilon\to 0.
			\end{aligned}
		\end{equation}
		Here, the first inequality follows from \eqref{f3}. To obtain the second inequality, we have used the fact that, by  construction, $\nabla u^{\eps,\delta}$ and $\tilde B^{\eps,\delta}_{r(\eps)}$ are bounded, respectively, in $L^2(\Omega;\R^{3\times 3})$ and in $L^2(\Omega;\R^3)$, and that,  furthermore, the projection operator is also bounded from $L^2$ to $L^2$, so that also $B^{\eps,\delta}$ is bounded in $L^2(\Omega;\R^3)$. To obtain the convergence in the third line of \eqref{eq:lim} we have used \eqref{eq-aux20} and \eqref{eq:conv-divB}.
		
		Combined, \eqref{eq:lim} and \eqref{eq:limsup} imply that
		\begin{equation}\label{rec-aff-Omega0}
			\limsup_{\eps\to 0} E_{\varepsilon}(u^{\eps,\delta}, B^{\eps,\delta};\Omega)
			\leq E_{hom} (\lambda, B_0;\Omega)+|\Omega|\delta=E_{hom} (\lambda, B;\Omega)+|\Omega|\delta.
		\end{equation}
		Moreover, 
		\begin{equation}\label{rec-aff-Omega}
			\begin{aligned}
				\limsup_{\eps\to 0} E_{\varepsilon}(u^{\eps,\delta}, B^{\eps,\delta};\R^3\setminus\Omega)
				&=\limsup_{\eps\to 0} \int_{\R^3\setminus\Omega} f_{ext}(x,B^{\eps,\delta})\,dx \\
				&=\int_{\R^3\setminus\Omega} f_{ext}(x,B)\,dx=E_{hom} (\lambda, B;\R^3\setminus\Omega),
			\end{aligned}
		\end{equation}
		as a consequence of \eqref{eq-aux26} and the $L^2$-continuity on $\R^3\setminus\Omega$ assumed in \eqref{fext1}.
		Altogether, we have shown that $(u_\eps,B_\eps):=(u^{\eps,\delta(\eps)},B^{\eps,\delta(\eps)})$ 
		is a $\delta$-almost recovery sequence as asserted.
		Here, also notice that for fixed $\delta$, $|B^{\eps,\delta}|^2$ inherits the local equi-integrability of $|\tilde{B}_r^{\eps,\delta}|^2$ (which is uniform in $r$) due to \eqref{eq-aux26}.
		
	\end{proof}
	 
	The previously obtained characterization of the homogenization functional for affine/constant limits allows us to prove that $f_{hom}$ is $2$-Lipsichitz continuous, i.e., locally Lipschitz.
	\begin{proposition}[properties of $f_{hom}$] \label{prop:fhom}
		Suppose that $f$ satisfies \eqref{f0}--\eqref{f3}.
		Then $(G,B)\mapsto f_{hom}(G,B)$ is finite valued and  has quadratic growth. Moreover, it is continuous and satisfies a $2$-Lipschitz property, i.e.,  there is $L>0$ such that for all $G_i\in\R^{3\times 3}$, $B_i\in\R^3$, $i=0,1$ 
		\begin{equation} \label{fhom:2Lip}
			|f_{hom}(G_0,B_0)-f_{hom}(G_1,B_1)|\leq L (1+|G_0|+|G_1|+|B_0|+|B_1|) (|G_0-G_1|+|B_0-B_1|).
		\end{equation} 
	\end{proposition}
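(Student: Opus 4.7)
The plan is first to establish finiteness and quadratic growth by exhibiting an explicit competitor, and then to prove the 2-Lipschitz property by modifying an almost-optimal competitor for $(G_0,B_0)$ into an admissible one for $(G_1,B_1)$ with only a controlled change in the energy. Continuity follows from the 2-Lipschitz property.

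\textbf{Finiteness and quadratic growth.} Pick $\chi\in C^\infty_c(Q;[0,1])$ with $\chi\equiv 1$ on a neighborhood of $\overline{M\cap Q}$, which is possible by \eqref{Mwellsep}. For any $(G,B)$ I would use the competitor $k=1$, $\beta\equiv 0$, $\varphi(z):=-\chi(z)Gz\in W^{1,\infty}_0(Q;\R^3)$. Since $\nabla\chi=0$ where $\chi=1$, one has $\nabla\varphi=-G$ on $M\cap Q$, hence $\operatorname{sym}(G+\nabla\varphi)=0$ there, and $f(z,G+\nabla\varphi,B)\leq C(|B|^2+1)$ by \eqref{f2}; off $M$ the upper bound in \eqref{f1} combined with $\|\nabla\varphi\|_{L^\infty}\leq C|G|$ gives $f(z,G+\nabla\varphi,B)\leq C(|G|^2+|B|^2+1)$. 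Integrating yields $f_{hom}(G,B)\leq C(1+|G|^2+|B|^2)$, while $f_{hom}\geq -C$ is immediate from the lower bounds in \eqref{f1} and \eqref{f2}.

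\textbf{Construction of a modified competitor.} Fix $\eta>0$ and choose admissible $(k,\varphi_0,\beta_0)$ for $(G_0,B_0)$ with $\int_Q f(kz,G_0+\nabla\varphi_0,B_0+\beta_0)\,dz\leq f_{hom}(G_0,B_0)+\eta$. I would then construct $\psi\in W^{1,2}_0(Q;\R^3)$ satisfying $\nabla\psi=G_0-G_1$ on $\{kz\in M\}$ and $\|\nabla\psi\|_{L^\infty}\leq C|G_0-G_1|$. This is done subcell-by-subcell: on each $1/k$-subcell $C_j$ of center $z_j/k$, pick a cutoff $\chi_j\in C^\infty_c(C_j;[0,1])$ equal to $1$ on the shrunken inclusion (which sits strictly inside $C_j$ by \eqref{Mwellsep}), and set $\psi(z):=(G_0-G_1)(z-z_j/k)\chi_j(z)$; since $\nabla\chi_j$ vanishes on the inclusion, $\nabla\psi=G_0-G_1$ there, and $\psi=0$ at all subcell boundaries. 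Put $\varphi_1:=\varphi_0+\psi$ and $\beta_1:=\beta_0$. A direct computation shows $G_1+\nabla\varphi_1=G_0+\nabla\varphi_0$ on $\{kz\in M\}$, so $\operatorname{sym}(G_1+\nabla\varphi_1)=0$ there and $(k,\varphi_1,\beta_1)$ is admissible.

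\textbf{Lipschitz estimate.} On $\{kz\in M\}$, the identity $G_1+\nabla\varphi_1=G_0+\nabla\varphi_0$ reduces \eqref{f4} to a pointwise bound proportional to $|B_1-B_0|$ alone; off the inclusions, \eqref{f3} gives a pointwise bound proportional to $|G_0-G_1|+|\nabla\psi|+|B_1-B_0|\leq C|G_0-G_1|+|B_1-B_0|$. To integrate with Cauchy--Schwarz, I need $L^2$-bounds on $\nabla\varphi_0$ and $\beta_0$: the coercive lower bounds in \eqref{f1},\eqref{f2} control $\operatorname{sym}(G_0+\nabla\varphi_0)$ in $L^2$ on $Q\setminus\{kz\in M\}$ and force it to vanish on $\{kz\in M\}$, whence Korn's inequality on $W^{1,2}_0(Q;\R^3)$ applied to $\nabla\varphi_0=[\operatorname{sym}(G_0+\nabla\varphi_0)-\operatorname{sym}G_0]+\operatorname{skw}\nabla\varphi_0$ yields $\|\nabla\varphi_0\|_{L^2}+\|\beta_0\|_{L^2}\leq C(1+|G_0|+|B_0|+\sqrt\eta)$. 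Combining, I get
\[
f_{hom}(G_1,B_1)-f_{hom}(G_0,B_0)\leq C\bigl(1+|G_0|+|G_1|+|B_0|+|B_1|\bigr)\bigl(|G_0-G_1|+|B_0-B_1|\bigr)+C\eta,
\]
and sending $\eta\to 0$ and swapping the roles of the two points proves \eqref{fhom:2Lip}.

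\textbf{Main obstacle.} The delicate step is the \emph{exact} identity $\nabla\psi=G_0-G_1$ on the inclusions in the construction of $\psi$: any slack there would either violate the rigidity constraint (giving $f=+\infty$ on $M$ via \eqref{f2}) or would create an $L^2$ error in the skew parts that \eqref{f4} cannot dispose of, since \eqref{f4} only controls skew-part differences. The well-separation assumption \eqref{Mwellsep} is precisely what makes this exact local construction compatible with $\psi\in W^{1,2}_0(Q;\R^3)$.
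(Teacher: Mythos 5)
Your proof is correct, but it follows a genuinely different route from the paper's. You work entirely at the level of the multi-cell formula \eqref{eq:deffhom}: given a near-optimal competitor $(k,\varphi_0,\beta_0)$ for $(G_0,B_0)$, you build a corrector $\psi\in W^{1,2}_0(Q;\R^3)$ with $\nabla\psi=G_0-G_1$ on $\{kz\in M\}$, re-arranging the cutoff argument from Lemma~\ref{lem:wlim-approx} into a $1/k$-periodic subcell construction, and bound the change in the cell energy using \eqref{f3}, \eqref{f4}, and Korn's inequality. The paper instead fixes $\Omega=\Lambda=Q$ and appeals to Proposition~\ref{prop:affine} as a black box: it takes a $\delta$-almost recovery sequence for the affine target $(G_0x,B_0)$, shifts it by $(G_1-G_0)x$ corrected via Lemma~\ref{lem:wlim-approx}, and then plays the recovery sequence bound against the $\Gamma$-$\liminf$ inequality, invoking \eqref{f3} and \eqref{f4} to bound the discrepancy. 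Structurally, your argument is more elementary and self-contained (it does not require Proposition~\ref{prop:affine} to hold first, which matters for the logical ordering in the paper), while the paper's approach spares one from re-deriving the cutoff construction at cell scale by reusing the already-established lemma. Both approaches live or die by the same observation you single out at the end: the modified competitor's gradient must \emph{exactly} match the affine shift on the inclusions to preserve the rigidity constraint, and \eqref{Mwellsep} is what lets the cutoff transition happen with a gradient of size $O(|G_0-G_1|)$, uniformly in $k$ (for you) respectively in $\eps$ (for the paper). One minor point: the paper's growth step also records the lower bound $f_{hom}(G,B)\geq \frac{1}{C}(|\operatorname{sym}(G)|^2+|B|^2)-C$ via Jensen's inequality, which you omit; this is not needed for the statement as written, but it is used later in the paper's proof of Theorem~\ref{thm:ConvLin}, so worth keeping in mind.
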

	\begin{proof}
		First, note that \eqref{f2} together with the upper estimate of $f_{hom}$  obtained from  \eqref{eq:deffhom} for $k=1$, $\beta=0$,  and $\varphi(z)=-\operatorname{sym}(G_0)z$ if $z\in M$ with $\varphi\in W^{1,2}_0(Q;\R^3)$ implies that  $f_{hom}(G_0,B_0)\le C(1+|G_0|^2+|B_0|^2)$. On the other hand,  for every $\delta>0$ there is $k_0\in\N$, $\varphi_0$, and  $\beta_0$ admissible in \eqref{eq:deffhom}  such that 
        \begin{align*}
         f_{hom}(G_0,B_0)+\delta &\ge \int_Q f(k_0z,G_0+\nabla\varphi_0(k_0z),B_0+\beta_0(k_0z))\, dz\\
         &\ge \frac1C\int_Q (|\operatorname{sym}(G_0+\nabla\varphi_0(k_0z))|^2+|B_0+\beta_0(k_0z)|^2)-C\,d z\\
         &\ge \frac1C(|\operatorname{sym}(G_0)|^2+|B_0|^2)-C
        \end{align*}
        
        in view of 
        \eqref{f1}, \eqref{f2}, and the Jensen inequality.  Consequently, 
        the arbitrareness od $\delta>0$ yields 
        $$
        f_{hom}(G_0,B_0)\ge\frac1C(|\operatorname{sym}(G_0)|^2+|B_0|^2)-C\ .$$
        
		We fix $\Omega=\Lambda=Q$ and $\delta>0$  from  Proposition~\ref{prop:affine} and consider $(u^{\delta,\varepsilon},B^{\delta,\varepsilon})_\varepsilon \rightharpoonup (G_0x,B_0)$ such that \eqref{eq:LinLimsup} holds. Coercivity and boundedness of $f_{hom}$ just proved  imply that 
		$$\frac1C\limsup_{\varepsilon\to 0}(\| \operatorname{sym}(\nabla u^{\delta,\varepsilon})\|^2+\|B^{\delta,\varepsilon}\|^2)-C\le f_{hom}(G_0,B_0)+\delta\le C(1+|G_0|^2+|B_0|^2)+\delta.$$
		
		Define $v^{\delta, \varepsilon}(x)=(G_1-G_0)x +u^{\delta,\varepsilon}(x)$ and $\beta^{\delta,\varepsilon}=B_1-B_0 +B^{\delta,\varepsilon}$.
		Hence, $(v^{\delta, \varepsilon},\beta^{\delta,\varepsilon})\rightharpoonup x\mapsto (G_1x,B_1)$. 
		Additionally, $$\limsup_{\varepsilon\to 0}( \|\operatorname{sym}(\nabla v^{\delta,\varepsilon})\|^2+\|\beta^{\delta,\varepsilon}\|^2)\le C(1+|G_0|^2+|B_0|^2+|G_1|^2+|B_1|^2)+\delta .$$
		We modify $x\mapsto(G_1-G_0)x$ using Lemma~\ref{lem:wlim-approx} to a sequence $(w^\varepsilon)$ such that $\nabla w^\varepsilon=0$ on $ N_{\varepsilon}$, i.e., also   on the inclusion part of the domain $Q$. Moreover, $\nabla w^\varepsilon=G_1-G_0$ on $Q\setminus N_\varepsilon$. 
        Let $\hat w^{\delta,\varepsilon}= w^\varepsilon+ u^{\delta,\varepsilon}$.
		In view of \eqref{eq:LinLiminf} applied to the  sequence  $(\hat w^{\delta, \varepsilon},\beta^{\delta,\varepsilon})_\varepsilon$ and \eqref{f0},\eqref{f3}, and \eqref{f4} we get  
		\begin{align*}
			&f_{hom}(G_1,B_1)-f_{hom}(G_0,B_0)\le \delta+ \liminf_{\varepsilon\to 0}\int_Q f\bigg(\frac x\varepsilon,\nabla \hat w^\varepsilon,\beta^{\delta,\varepsilon}\bigg)-f\bigg(\frac x\varepsilon,\nabla u^{\delta,\varepsilon}, B^{\delta,\varepsilon}\bigg)\,dx\\
            &=\delta+ \liminf_{\varepsilon\to 0}\Bigg[\int_{Q\setminus N_\varepsilon}f\bigg(\frac x\varepsilon,\nabla v^{\delta,\varepsilon},\beta^{\delta,\varepsilon}\bigg) \, dx -\int_{Q\setminus N_\varepsilon} f\bigg(\frac x\varepsilon,\nabla u^{\delta,\varepsilon}, B^{\delta,\varepsilon}\bigg)\,dx\\
           & +\int_{ N_\varepsilon}f\bigg(\frac x\varepsilon,\nabla u^{\delta,\varepsilon},\beta^{\delta,\varepsilon}\bigg) \, dx -\int_{ N_\varepsilon} f\bigg(\frac x\varepsilon,\nabla u^{\delta,\varepsilon}, B^{\delta,\varepsilon}\bigg)\,dx
            \Bigg]\\
			&\le\delta+ \limsup_{\varepsilon\to 0}\int_{Q\setminus N_\varepsilon} C(1+|\operatorname{sym}(\nabla v^{\delta, \varepsilon})|+|\operatorname{sym}(\nabla u^{\delta,\varepsilon})|+|\beta^{\delta,\varepsilon}|+| B^{\delta,\varepsilon}|)(|B_1-B_0|+| G_{1} - G_{0} |)\\
            &+ \limsup_{\varepsilon\to 0}\int_{ N_\varepsilon} C(1+|\operatorname{sym}(\nabla u^{\delta, \varepsilon})|+|\operatorname{sym}(\nabla u^{\delta,\varepsilon})|+|\beta^{\delta,\varepsilon}|+| B^{\delta,\varepsilon}|)(|B_1-B_0|)\\
            &\le\delta + L(1+
            |G_0|+|G_1| +|B_0|+|B_1|)(|B_1-B_0|+| G_{1} - G_{0} |)
		\end{align*} 
		Then we exchange $G_0,B_0$ and $G_1,B_1$ to obtain the estimate because $\delta>0$ was arbitrary.
		
	\end{proof}
	
	%
	\begin{proof}[Proof of Theorem~\ref{thm:ConvLin}]
		We choose a partition of $\Omega$ into Lipschitz subdomains $\{ \Omega_{i} \}_{i}$ and define
		\begin{equation*}
			\lambda_{i}= \frac{1}{|\Omega_{i}|} \int_{\Omega_{i}} \nabla u(x) dx.
		\end{equation*}
		
		For any given $\delta>0$, if the partition $\{ \Omega_{i} \}$ of $\Omega$ is fine enough, we get that 
		\begin{equation} \label{eq:lambdai_close}
			\sum_{i}\int_{\Omega_{i}} \big(|\nabla u -  \lambda_i |^{2}+|B -  \beta_i |^{2}\big) < \delta^2
		\end{equation}
		with suitable constants $\lambda_i\in\R^{3\times 3}$, $\beta_i\in \R^3$ (also depending on $\delta$).
		
		\noindent{\bf (i) $\Gamma-\liminf$ inequality.} 
		We will reduce our general case to that of Proposition~\ref{prop:affine} (i).
		Consider $(u^{\varepsilon},B_{\varepsilon}) \rightharpoonup (u,B)$ weakly in $W^{1,2}(\Omega;\R^3)\times L^2_{\div}(\R^3;\R^3)$ and define 
		\[	
		v^{{\varepsilon},i}:=  \lambda_{i}x - u + u_{\varepsilon}~~~\text{and}~~~
		B^{{\varepsilon},i}:=  \beta_{i} - B + B_{\varepsilon}, \quad x\in \Omega_i.
		\]  
		While $B^{{\varepsilon},i}$ is a perfectly admissible sequence of magnetic fields on $\Omega_i$, divergence free with constant weak limit $\beta_{i}$,
		we cannot directly use $v^{{\varepsilon},i}$ due to the presence of the hard inclusions. 
		However, we may use Lemma \ref{lem:wlim-approx} in a suitable way to compensate this issue. 
		
		Let $\Lambda:=\{x\in \R^3\mid \operatorname{dist}(x;\Omega)<2\}$ and choose an extension of $u$ to $W^{1,2}(\Lambda;\R^3)$.
		Moreover, for each $i$, fix 
		\[
		\text{$\Lambda_i\subset\Lambda$ such that $\Lambda_i\supset \{x\in \R^3\mid \operatorname{dist}(x;\Omega_i)<1\}$.}
		\]
		For each $i$, apply Lemma~\ref{lem:wlim-approx} to $u_i:=\lambda_i x - u$ (instead of $w$) on $\Lambda_i$ (instead of $\Lambda$).
		The lemma yields sequences $\hat{u}_{\eps,i}$ such that $\nabla\hat{u}_{\eps,i}=0$ on $M_{\eps,i}$, i.e., on
		the rigid part $M_\eps$ outside an 
		$\eps$-sized neighborhood of $\partial\Lambda_i$. In particular, 
		$\nabla\hat{u}_{\eps,i}=0$ in $\Omega_i\cap \{\chi_M=1\}$ as long as $\eps$ is small enough. With this, we can modify $v^{{\varepsilon},i}$ to 
		\[
		\hat{v}^{{\varepsilon},i}=\hat{u}_{\eps,i}+u_{\varepsilon}.
		\]
		From \eqref{eq:lambdai_close} and the final estimate in \eqref{eq:hatu}, we also get that
		\begin{equation} \label{eq:hatu_close}
			\sum_{i}\int_{\Omega_{i}} \big(|\nabla \hat{u}_{\eps,i}|^{2}+|\beta_i-B|^{2}\big) < C\delta^2. 
		\end{equation}
		Notice that unlike $v^{{\varepsilon},i}$, $\hat{v}^{{\varepsilon},i}$ now has antisymmetric gradient on $\Omega_i\cap M$,
		while still $\hat{v}^{{\varepsilon},i}\rightharpoonup \lambda_i x$ in $W^{1,2}(\Omega_i;\R^3)$, just like $v^{{\varepsilon},i}$.

		By Proposition \ref{prop:affine}, we have
		\begin{equation} \label{eq:liminf-pwa}
			\liminf_{\varepsilon \rightarrow 0} \sum_{i}  \int_{\Omega_i} f \bigg( \frac{x}{\varepsilon},  \nabla \hat{v}^{\varepsilon,i}, B^{\varepsilon,i} \bigg) \geq \sum_{i} \int_{\Omega_i} f \bigg( \frac{x}{\varepsilon}, \lambda_i, \beta_i \bigg). 
		\end{equation}
		We claim that the left hand side above is close to $\int_{\Omega} f \big( \frac{x}{\varepsilon},\operatorname{sym}(\nabla u^{\varepsilon}),B^{\varepsilon}\big)$. 
		To see this, observe that
		\begin{equation*}
			\begin{aligned}
				&\bigg| \int_{\Omega_i} f \bigg( \frac{x}{\varepsilon},  \nabla u^{\varepsilon},B^{\varepsilon,i} \bigg)
				- \sum_{i} \int_{\Omega_{i}} 
				f \bigg( \frac{x}{\varepsilon},  \nabla \hat{v}^{\varepsilon,i}, B^{\varepsilon} \bigg)\bigg|  
				\\
				& \qquad \begin{aligned}[t] 
					& \leq \sum_{i} \int_{\Omega_{i}} C \big(|\nabla u^{\varepsilon}|+|\nabla \hat{v}^{\varepsilon, i}|+2|B^{\varepsilon}|\big) 
					\big(| \nabla \hat{u}_{i,\varepsilon}(x)|+|\beta_i-B|\big) ds \\
					& \leq C^{\frac{3}{2}} \big(\|u^{\varepsilon}\|_{W^{1,2}}+\|\hat{v}^{\varepsilon, i}\|_{W^{1,2}}+2\|B^{\varepsilon}\|_{L^2}\big)\delta
				\end{aligned}	
			\end{aligned}
		\end{equation*} 
		Here, the last two steps are due to \eqref{f2} (on $\Omega\cap M_\eps$), the Lipschitz property \eqref{f3} of $q$ (on $\Omega\setminus M_\eps$) and \eqref{eq:hatu_close}, using the Cauchy-Schwarz inequality. 
		Since  $f_{hom}$ is also $2$-Lipschitz by Proposition~\ref{prop:fhom}, 
		we can analogously obtain that
		\begin{equation*}
			\begin{aligned}
				&\bigg| \int_{\Omega} f_{hom}(\nabla u,B)
				- \sum_{i} \int_{\Omega_{i}} 
				f_{hom}(\lambda_i, B)\bigg|  
				\\
				& \qquad \begin{aligned}[t] 
					& \leq C^{\frac{3}{2}} \big(\|u^{\varepsilon}\|_{W^{1,2}(\Omega;\R^{3\times 3})}+\sum_{i}\|\lambda_i \|_{L^2(\Omega_i;\R^{3\times 3})}+
					2\|B\|_{L^2(\Omega;\R^{3})}\big)\delta
				\end{aligned}	
			\end{aligned}
		\end{equation*} 
		In total, we infer from \eqref{eq:liminf-pwa} that 
		\begin{equation} \label{eq:thm-lbinner}
			\liminf_{\varepsilon \rightarrow 0} E^\eps (u^\eps,B^\eps;\Omega)\geq E_{hom} (u,B;\Omega)-\tilde{C}\delta.
		\end{equation}
		In addition, we
		have that  
		\begin{equation} \label{eq:thm-lbouter}
			\liminf_{\varepsilon \rightarrow 0} \int_{\R^3\setminus \Omega} f_{ext}(x,B^\eps)\,dx\geq 
			\int_{\R^3\setminus \Omega} f_{ext}(x,B)\,dx=\int_{\R^3\setminus \Omega} f_{ext}(x,B)\,dx.
		\end{equation}
		as a direct consequence of \eqref{fext2}.
		Since $\delta>0$ was arbitrary, \eqref{eq:thm-lbinner} and \eqref{eq:thm-lbouter} combined yield the assertion. 
		
		\noindent{\bf (ii): Recovery sequence.} For every $\delta>0$, choose a piecewise affine approximation $u^\delta$ of $u$ in $W^{1,2}(\Omega;\R^3)$ 
		and a piecewise constant approximation $B^\delta$ of $B$ in $L^2(\Omega;\R^3)$ (possibly with $\div B^\delta\neq 0$ at jumps) such that
		\begin{equation}\label{eq:pa-approx}
			\|u^\delta-u\|_{W^{1,2}(\Omega;\R^3)}+\|B^\delta-B\|_{L^2(\Omega;\R^3)} \leq \delta.
		\end{equation}
		Let $\Omega_{i}=\Omega_{i}(\delta)\subset \Omega$ be an open Lipschitz set (typically a tetrahedron) where $u^\delta$ is affine and 
		$B^\delta$ is constant. Clearly, $\Omega$ can be written as a pairwise disjoint union of such sets, up to a set of measure zero.
		For any $r>0$ (small enough), the sets 
		\begin{equation*}
			\Omega_{i}^{(r)}:=\{x\in \R^3\mid \operatorname{dist}(x;\Omega_i)<r\}
		\end{equation*}
		provide an open covering of $\overline\Omega$, and we can choose a smooth decomposition of unity on $\overline\Omega$
		subordinate to this covering, i.e.,
		\begin{equation*}
			1=\sum_i \varphi_{i,\eps}^{(r)}~~~\text{on}~\overline\Omega,\quad 
			\text{with $\varphi_{i,\eps}^{(r)}\in C_c^\infty(\R^3;[0,1])$ such that $\operatorname{supp}\varphi_{i,\eps}^{(r)}\subset \Omega_{i}^{(r)}$}.
		\end{equation*}	
		Notice that by construction, we also have that 
		\begin{equation*}
			\text{$\varphi_{i,\eps}^{(r)}(x)=1$ for all $x\in \Omega_i$ with $\operatorname{dist}(x;\partial\Omega_i)>r$ }. 
		\end{equation*}	
		In addition, we may assume that there exists a constant $C>1$ independent of $\delta$, $r$ and $\eps$ such that for all $i$,
		\begin{equation} \label{eq:tGC-rec-10}
			|\nabla \varphi_{i,\eps}^{(r)}|\leq \frac{C}{r}\quad\text{and}\quad \nabla \varphi_{i,\eps}^{(r)}(x)=0
            ~\text{on}~M_\eps.
		\end{equation}	
		Here, the second condition above exploits \eqref{Mwellsep}, i.e., that the inclusions are well separated, and forces us to choose $\varphi^{(r)}_{i,\eps}$ dependent on $\eps$, which otherwise would not be necessary.
		
		By Proposition~\ref{prop:affine} (ii) applied with $\Lambda:=\Omega_i^{(1)}:=\{x\mid \operatorname{dist}(x;\Omega_i)<1\}$ instead of $\Omega$, 
		we obtain a $\delta$-almost recovery sequence $(u^{\delta}_{\eps,i},B^{\delta}_{\eps,i})$ for 
		$(u_i^\delta,B_i^\delta)\in W^{1,2}(\Omega_i^{(1)};\R^3)\times L^2_{\div}(\Omega_i^{(1)};\R^3)$ 
		(the natural affine/constant extension of $(u^\delta,B^\delta)$ from $\Omega_i$ to $\Omega_i^{(1)}$): 
		\[
			\limsup_{\eps\to 0} E^\eps (u^{\delta}_{\eps,i},B^{\delta}_{\eps,i};\Omega_i^{(1)})\geq E_{hom} (u^\delta,B;\Omega_i^{(1)}) +
		|\Omega_i^{(1)}|\delta
		\]
		and also  (since the domain of integration $\Omega$ in Proposition~\ref{prop:affine} (ii) can be chosen arbitrarily) 
		\begin{align}\label{eq:recaffine}
			\limsup_{\eps\to 0} E^\eps (u^{\delta}_{\eps,i},B^{\delta}_{\eps,i};\Omega_i) \leq E_{hom} (u^\delta,B^\delta;\Omega_i) + |\Omega_i|\delta.
		\end{align}
		Here, $E^\eps (u,B;\Lambda)$ denotes the restriction of $E^\eps$ to $\Lambda$ as in \eqref{Edomains}, and $E_{hom}(\cdot,\cdot;\Lambda)$ is defined analogously.
		For $0<r\leq 1$, we define
		the functions
		\begin{align*}
			u^{\delta,r}_{\eps}(x):=\sum_i \varphi_{i,\eps}^{(r)}(x) u^{\delta}_{\eps,i}(x),~~x\in \Omega,
		\end{align*}
		and
		\begin{align*}
			B^{\delta,r}_{\eps}(x):=\varphi_{0,\eps}^{(r)}(x) B(x) +\sum_i \varphi_{i,\eps}^{(r)}(x) B^{\delta}_{\eps,i}(x),~~x\in \R^3,
			~~~\text{where}~\varphi_{0,\eps}^{(r)}:=1-\sum_i \varphi_{i,\eps}^{(r)}~\text{on $\R^3$}.
		\end{align*}
		Notice that since $\operatorname{sym}(\nabla\varphi_{i,\eps}^{(r)})=\nabla\varphi_{i,\eps}^{(r)}=0$ on $M_\eps$
		and $\operatorname{sym}(\nabla u^{\delta}_{\eps,i})=0$ on $M_\eps$, 
		we indeed have that $\operatorname{sym}(\nabla u^{\delta,r}_{\eps})=0$ on $M_\eps$ -- the rigidity constraint on the hard inclusions is respected.
		
		We now claim that in the limit as first $\eps\to 0$, then $r\to 0$ and finally $\delta \to 0$,
		the pair $(u^{\delta,r}_{\eps},\cP B^{\delta,r}_{\eps})$, with the divergence-free projection $\cP$ on $\R^3$ of Lemma~\ref{lem:div-free-proj-R3},
		will yield the desired recovery sequence after a suitable diogonalization argument. 
		In fact, it will be easier for us to diagonalize first with $B^{\delta,r}_{\eps}$ instead of $\cP B^{\delta,r}_{\eps}$, and apply the projection $\cP$ afterwards. 	
		The main issue here is to show that for each $\delta>0$, the error created in the transition layers (where $\varphi_{j,\eps}^{(r)}\in (0,1)$ for some $j$) is negligible in the limit as $\eps\to 0$ and then $r\to 0$.
		
		For each $i$, we have that
		\begin{equation} \label{eq:tGC-rec-13}
			\begin{aligned}
				&\|\nabla u^{\delta,r}_{\eps}-\nabla u^{\delta}_{\eps,i}\|_{L^2(\Omega_i;\R^{3\times 3})}\\
				&\qquad
				\leq \Big\|\Big(\sum_{j\neq i} \varphi_{j,\eps}^{(r)}\nabla u^{\delta}_{\eps,j}\Big)+\big(\varphi_{i,\eps}^{(r)}-1\big)\nabla u^{\delta}_{\eps,i}
				\Big\|_{L^2(\Omega_i;\R^{3\times 3})}	
				+\Big\|\sum_j \big(\nabla \varphi_{j,\eps}^{(r)}\big)\otimes u^{\delta}_{\eps,j}\Big\|_{L^2(\Omega_i;\R^{3\times 3})}. 
			\end{aligned}
		\end{equation}
		The two terms on the right hand side of \eqref{eq:tGC-rec-14} will be estimated separately. As to the first, we
		can exploit the equi-integrability of $(|\nabla u^{\delta}_{\eps,j}|^2_\eps$ provided in Proposition~\ref{prop:affine}:
		\begin{equation} \label{eq:tGC-rec-14}
			\begin{aligned}
				&\Big\|\Big(\sum_{j\neq i} \varphi_{j,\eps}^{(r)}\nabla u^{\delta}_{\eps,j}\Big)+\big(\varphi_{i,\eps}^{(r)}-1\big)\nabla u^{\delta}_{\eps,i}
				\Big\|_{L^2(\Omega_i;\R^{3\times 3})}	\\
				&\qquad \leq \sum_j \int_{\{x\in \Omega_i\mid \varphi_{i,\eps}^{(r)}\neq 1\}} |\nabla u^{\delta}_{\eps,j}|^2\,dx
				\leq \sum_j \sup_{0<\eps\leq 1} \int_{\{x\in \Omega_i\mid \operatorname{dist}(x;\partial\Omega_i)<r\}} |\nabla u^{\delta}_{\eps,j}|^2\,dx
				\underset{r\to 0}{\longrightarrow} 0,\\
			\end{aligned}
		\end{equation}
		for every fixed $\delta>0$. As to the second term, we have that
		\begin{equation} \label{eq:tGC-rec-15}
			\begin{aligned}
				& \Big\|\sum_j \big(\nabla \varphi_{j,\eps}^{(r)}\big)\otimes u^{\delta}_{\eps,j}\Big\|_{L^2(\Omega_i;\R^{3\times 3})}\\
				&\qquad \leq 
				\sum_j \Big\|\big(\nabla \varphi_{j,\eps}^{(r)}\big)\otimes (u^{\delta}_{\eps,j}-u^{\delta}_{j})\Big\|_{L^2(\Omega_i;\R^{3\times 3})}
				+\Big\|\sum_j \big(\nabla \varphi_{j,\eps}^{(r)}\big)\otimes u^{\delta}_{j}\Big\|_{L^2(\Omega_i;\R^{3\times 3})} \\
				&\qquad =\sum_j \Big\|\big(\nabla \varphi_{j,\eps}^{(r)}\big)\otimes (u^{\delta}_{\eps,j}-u^{\delta}_{j})\Big\|_{L^2(\Omega_i;\R^{3\times 3})}
				+\Big\|\sum_j \big(\nabla \varphi_{j,\eps}^{(r)}\big)\otimes (u^{\delta}_{j}-u^{\delta}_{i})\Big\|_{L^2(\Omega_i;\R^{3\times 3})},
			\end{aligned}
		\end{equation}
		the latter since $\sum_j  \varphi_{j,\eps}^{(r)}=1$ and therefore
		$
		\sum_j \big(\nabla \varphi_{j,\eps}^{(r)}\big)\cdot u^{\delta}_{i}=\big(\nabla \sum_j  \varphi_{j,\eps}^{(r)}\big) \cdot u^{\delta}_{i}=0. 
		$
		As a piecewise affine function, $u^{\delta}$ is globally Lipschitz with a constant $L=L(\delta)$. 
		If $r$ is chosen small enough (with respect to $\delta$), within $\Omega_i$, $\nabla\varphi_{j,\eps}^{(r)}\neq 0$ is possible only 
		for those $j$ such that $\Omega_j$ is adjacent to or coinciding with $\Omega^i$. Here, recall that by construction of $\varphi_{j,\eps}^{(r)}$, $\{\nabla \varphi_{j,\eps}^{(r)}\neq 0\}\cap \Omega^i$ is contained in an $r$-neighborhood of $\partial\Omega^i$. 
		For any $x\in\Omega_i$ and any pair $(i,j)$ that contributes in the last term of \eqref{eq:tGC-rec-15}, i.e.,
		so that $\nabla \varphi_{j,\eps}^{(r)}\big)\otimes (u^{\delta}_{j}-u^{\delta}_{i})(x)\neq 0$,
		we can thus find a point $\tilde{x}\in \partial\Omega_i\cap \partial\Omega_j$ with $|x-\tilde{x}|<r$ and 
		$u^{\delta}_{j}(\tilde{x})=u^{\delta}_{i}(\tilde{x})=u^\delta(\tilde{x})$. 
		The global Lipschitz property of $u^{\delta}_{j}$ (uniform in $j$) thus gives that
		$|u^{\delta}_{j}(x)-u^{\delta}_{i}(x)|\leq |u^{\delta}_{j}(x)-u^\delta(\tilde{x})|+|u^\delta(\tilde{x})-u^{\delta}_{i}(x)| \leq 2L r$ for all relevant $x$.
		As a consequence, we have that
		\begin{equation*}
			\Big|\big(\nabla \varphi_{j,\eps}^{(r)}\big)\otimes (u^{\delta}_{j}-u^{\delta}_{i})\Big|
			\leq |\nabla \varphi_{j,\eps}^{(r)}| 2L(\delta)r \leq 2CL(\delta)\quad\text{on}~\Omega^i,
		\end{equation*}
		by the aforementioned Lipschitz property and \eqref{eq:tGC-rec-10}. 
		Continuing the estimate of \eqref{eq:tGC-rec-15}, we thus further get that
		\begin{equation} \label{eq:tGC-rec-16}
			\begin{aligned}
				&\Big\|\sum_j \big(\nabla \varphi_{j,\eps}^{(r)}\big)\otimes u^{\delta}_{\eps,j}\Big\|_{L^2(\Omega_i;\R^{3\times 3})}\\
				&\qquad \leq \sum_j \Big\|\big(\nabla \varphi_{j,\eps}^{(r)}\big)\otimes (u^{\delta}_{\eps,j}-u^{\delta}_{j})\Big\|_{L^2(\Omega_i;\R^{3\times 3})}
				+|\{x\in \Omega\mid\nabla \varphi_{j,\eps}^{(r)}\neq 0~\text{for a $j$}\}| 2CL(\delta) \\
				&\qquad \leq \sum_j \Big\|\big(\nabla \varphi_{j,\eps}^{(r)}\big)\otimes (u^{\delta}_{\eps,j}-u^{\delta}_{j})\Big\|_{L^2(\Omega_i;\R^{3\times 3})}
				+C_1(\delta)r2CL(\delta)
			\end{aligned}
		\end{equation}
		Here, we used that $\{x\in \Omega\mid \nabla\varphi_{j,\eps}^{(r)}\neq 0~\text{for a $j$}\}\subset 
		\{x\in \Omega\mid \operatorname{dist}(x;\partial \Omega_j)<r~\text{for a $j$}\}$, the transition region which only depends on $\delta$ and $r$ and is an $r$-neighborhood of a finite union of Lipschitz boundaries with volume bounded by $C_1(\delta)r$, where $C_1=C_1(\delta)$ is a suitable constant. Notice that the first term on the right hand side of \eqref{eq:tGC-rec-16} converges to zero as $\eps\to 0$, by the strong convergence of $u^{\delta}_{\eps,j}$ to 
		$u^{\delta}_{j}$ in $L^2(\Omega_i;\R^{3}$, while the second converges to zero as $r\to 0$, uniformly in $\eps$.
		Altogether, \eqref{eq:tGC-rec-13}, \eqref{eq:tGC-rec-14}, \eqref{eq:tGC-rec-15} and \eqref{eq:tGC-rec-16} yield that
		\begin{equation} \label{eq:tGC-rec-17}
			\begin{aligned}
				&\lim_{r\to 0} \limsup_{\eps\to 0} \|\nabla u^{\delta,r}_{\eps}-\nabla u^{\delta}_{\eps,i}\|_{L^2(\Omega_i;\R^{3\times 3})}=0.
			\end{aligned}
		\end{equation}
		A similar (and much easier) estimate for can be done for $B^{\delta,r}_{\eps}$ instead of $\nabla u^{\delta,r}_{\eps}$, essentially only exploiting the equi-integrability of $(|B^{\delta}_{\eps,i}|^2)_\eps$.
		In this way, we obtain that 
		\begin{equation} \label{eq:tGC-rec-18}
			\begin{aligned}
				&\lim_{r\to 0} \limsup_{\eps\to 0}\|B^{\delta,r}_{\eps}-B^\delta_{\eps,i}\|_{L^2(\Omega_i;\R^{3})}=0.
			\end{aligned}
		\end{equation}
		and
		\begin{equation} \label{eq:tGC-rec-19}
			\begin{aligned}
				&\lim_{r\to 0} \limsup_{\eps\to 0}\|B^{\delta,r}_{\eps}-B\|_{L^2(\R^3\setminus \Omega;\R^{3})}=0.
			\end{aligned}
		\end{equation}

		In view of \eqref{f3} and the fact that $u^{\delta,r}_{\eps}$ respects our rigidity constraint, i.e., $\operatorname{sym}(\nabla u^{\delta,r}_{\eps})=0$ on $M_\eps$, \eqref{eq:recaffine}, \eqref{eq:tGC-rec-17} and \eqref{eq:tGC-rec-18}
		imply that
		\begin{equation} \label{eq:tGC-rec-20}
			\begin{aligned}
				&\limsup_{r\to 0} \limsup_{\eps\to 0} 
				E_\eps(u^{\delta,r}_{\eps},B^{\delta,r}_{\eps};\Omega)-E_{hom}(u^{\delta}, B^{\delta};\Omega) \\
				&\quad \leq 
				\limsup_{r\to 0} \limsup_{\eps\to 0} \Big( \begin{aligned}[t]
					& \big|E_\eps(u^{\delta,r}_{\eps},B^{\delta,r}_{\eps};\Omega)
					-\sum_i E_\eps(u^{\delta}_{\eps,i},B^{\delta,r}_{\eps,i};\Omega_i)\big| \\
					& +\sum_i \big(E_\eps(u^{\delta}_{\eps,i},B^{\delta,r}_{\eps,i};\Omega_i)
					-E_{hom}(u^{\delta}, B^{\delta};\Omega_i)\big) \Big)
				\end{aligned}\\
				&\quad \leq \sum_i |\Omega_i|\delta = |\Omega|\delta.
			\end{aligned}
		\end{equation} 
		In addition, \eqref{eq:tGC-rec-19} and \eqref{fext1} imply that
		\begin{equation} \label{eq:tGC-rec-21}
			\begin{aligned}
				\lim_{r\to 0} \limsup_{\eps\to 0} 
				\big|E_\eps(u^{\delta,r}_{\eps},B^{\delta,r}_{\eps};\R^3\setminus \Omega)-E_\eps(u^{\delta,r}_{\eps},B;\R^3\setminus \Omega)\big|=0.
			\end{aligned}
		\end{equation}
		Combining this with \eqref{eq:tGC-rec-20}, we see that
		\begin{equation} \label{eq:tGC-rec-22}
			\begin{aligned}
				\lim_{\delta\to 0} \limsup_{r\to 0} \limsup_{\eps\to 0} 
				E_\eps(u^{\delta,r}_{\eps},B^{\delta,r}_{\eps})
				=\lim_{\delta\to 0} E_{hom}(u^{\delta}, B^\delta)
				=E_{hom}(u, B).
			\end{aligned}
		\end{equation} 
		Here and below, with a slight abuse of notation, we interpret $B^\delta$ as a 
		function on $\R^3$ by extending it with $B$ on $\R^3\setminus\Omega$, i.e,
		$B^\delta=\chi_\Omega B^{\delta}+\chi_{\R^3\setminus\Omega}B$. 
		For the second equality in \eqref{eq:tGC-rec-22},
		we combined \eqref{eq:pa-approx} and \eqref{eq:tGC-rec-19} with the 
		fact that $E_{hom}(\cdot,\cdot;\Omega)$ is continuous on $W^{1,2}(\Omega;\R^3)\times L^2(\R^3;\R^3)$,
		as a consequence of Proposition~\ref{prop:fhom}.
		
		Finally, we also have that
		\begin{equation}\label{eq:tGC-rec-23}
			\begin{aligned}
				&\lim_{r\to 0}\limsup_{\eps\to 0}\|\div B^{\delta,r}_{\eps}\|_{(\mathring W^{1,2}(\R^3))}\\
				& \begin{aligned}[t]
					&\leq \lim_{r\to 0}\limsup_{\eps\to 0} \|\div (B^{\delta,r}_{\eps}-B^\delta)\|_{(\mathring W^{1,2}(\R^3))^*}+\|\div (B^\delta-B)\|_{(\mathring W^{1,2}(\R^3))^*}\\
					&\leq \lim_{r\to 0}\limsup_{\eps\to 0} \|\div (B^{\delta,r}_{\eps}-B^\delta)\|_{(\mathring W^{1,2}(\R^3))^*}+C\|B^\delta-B\|_{L^2(\R^3;\R^3))}\\
					&=C\|B^\delta-B\|_{L^2(\R^3;\R^3))} \underset{\delta\to 0}{\longrightarrow} 0.
				\end{aligned}
			\end{aligned}
		\end{equation}
		Above, we used that 
		$\|\div (B^{\delta,r}_{\eps}-B^\delta)\|_{(\mathring W^{1,2}(\R^3))^*}\to 0$ as $\eps\to 0$ (first) and $r\to 0$ (second), which can
		can be shown as in the proof of Proposition~\ref{prop:affine} (ii): As essential ingredients, we again have that $B^{\delta,r}_{\eps}-B^\delta\rightharpoonup 0$ in $L^2$, $B^{\delta,r}_{\eps}-B^\delta$ vanishes outside the fixed bounded domain 
		$\tilde\Omega:=\Omega^{(1)}$ (with the $1$-neighborhood $\Omega^{(1)}$ of $\Omega$), and after restriction to $\tilde\Omega$, 
		$L^2(\tilde\Omega)/\R^3$ compactly embeds into
		$(\mathring W^{1,2}(\tilde\Omega))^*\cong (W^{1,2}(\tilde\Omega)/\R^3)^*$. 
		
		A suitable choice of a diagonal sequence $(\tilde{u}_\eps,\tilde{B}^\eps):=(u^{\delta(\eps),r(\eps)}_{\eps},B^{\delta(\eps),r(\eps)}_{\eps})$, with $r(\eps)\to 0$ (slow enough) and $\delta(\eps)\to 0$ (even slower), 
		now gives almost all properties of a recovery sequence: 
		\begin{align}\label{eq:recovery0}
			\begin{aligned}
				&E_\eps (\tilde{u}_\eps,\tilde{B}_\eps)\underset{\eps\to 0}{\longrightarrow}E_{hom} (u,B),\\	
				&\tilde{u}_\eps\underset{\eps\to 0}{\longrightarrow} u~~~
                 \text{strongly in $L^{2}(\Omega;\R^3)$},\quad
                 \tilde{B}_\eps\underset{\eps\to 0}{\rightharpoonup}B~~~
                 \text{weakly in $L^{2}(\Omega;\R^3)$,}\quad\text{and}\\
				& \|\div \tilde{B}^\eps\|_{(\mathring W^{1,2}(\R^3;\R^3))}\underset{\eps\to 0}{\longrightarrow} 0. 
			\end{aligned}
		\end{align} 
        Two asserted properties are still missing, though: $\tilde{B}^\eps$ is not yet divergence-free (but close), and possibly $\tilde{u}_\eps\neq 0$ on $\partial\Omega$. 
		Here, notice that combined with the equi-coercivity of $E_\eps$ (cf.~Remark~\ref{rem:equi-coercive}, in view of the already known strong convergence of $\tilde{u}_\eps$ in $L^2$, we do not need boundary conditions for Korn's inequality here),
        the convergence in the first line of \eqref{eq:recovery0} implies that 
		$(\tilde{u}_\eps,\tilde{B}_\eps)$ is bounded in $W^{1,2}(\Omega;\R^3)\times L^2(\R^3,\R^3)$.
		Hence, we also have that
        \begin{align}\label{eq:recovery0b}
			\begin{aligned}
				&(\tilde{u}_\eps,\tilde{B}_\eps)\underset{\eps\to 0}{\rightharpoonup}B~~~
                 \text{weakly in $W^{1,2}(\Omega;\R^3)\times L^{2}(\Omega;\R^3)$.}
			\end{aligned}
		\end{align}
		To obtain the recovery sequence as asserted, it now suffice to correct the boundary values of $\tilde{u}_\eps$ and replace $\tilde{B}_\eps$ with its divergence-free projection $\cP \tilde{B}_\eps$.

       As to the former, we can again use Lemma~\ref{lem:wlim-approx}: 
        With $\Lambda:=\Omega$ and $w:=\tilde{u}_\eps-u$, we obtain an auxiliary sequence 
        $(\hat{w}_\eps)$, bounded in $W^{1,2}$ such that $\hat{w}_\eps\to 0$ in $L^2$ (since $\tilde{u}_\eps\to u$ in $L^2$, cf.~\eqref{eq:hatwL2conv}) and $\hat{w}_\eps=\tilde{u}_\eps-u$ on the boundary of each $\eps$-cell
        $\eps(z+Q)$, $z\in \mathcal{Z}_\eps$. Choosing a sequence of smooth cut-off functions
        $\varphi_\eps\in C_c^\infty(\Omega;[0,1])$ such that
        \[
            \Omega\setminus\Omega_\eps\subset \{\varphi_\eps=0\},~~~
            \{\varphi_\eps=1\}\nearrow \Omega,~~~\nabla\varphi_\eps=0~~\text{on}~~M_\eps
            ~~~\text{and}~~~|\nabla \varphi_\eps|\hat{w}_\eps\to 0~~\text{in}~~L^2,
        \]
        we obtain the modified displacements
        \[
            \hat{u}_\eps(x):=\begin{cases}
                \tilde{u}_\eps(x)-(1-\varphi_\eps(x))\hat{w}_\eps(x) & \text{if}~x\in \Omega_\eps,\\
                u(x) & \text{if}~x\in \Omega\setminus\Omega_\eps.
            \end{cases}
        \]
        By construction, these belong to $W^{1,2}(\Omega;\R^3)$ (since $\hat{w}_\eps=\tilde{u}_\eps-u$ on $\partial \Omega_\eps$, the piecewise definition does not cause an issue), are bounded in this space and satisfy $\hat{u}_\eps=u=0$ on $\partial\Omega$.
        In addition, $\hat{u}_\eps=\tilde{u}_\eps$ on $\{\varphi_\eps=1\}\nearrow \Omega$ and $\nabla \hat{u}_\eps=\nabla \tilde{u}_\eps$ on $M_\eps$ (so that rigidity in the inclusions holds). Using the Lipschitz property \eqref{f3} to estimate the error in the energy on the asymptotically vanishing boundary layer
        $\{\varphi_\eps\neq 1\}$ (outside of $M_\eps$, where nothing was changed),
        it is now not difficult to see that \eqref{eq:recovery0} and \eqref{eq:recovery0b} also hold with 
        $\hat{u}_\eps$ instead of $\tilde{u}_\eps$.
        
        Finally, we still have to modify $\tilde{B}_\eps$ to a divergence-free field.
        By Lemma~\ref{lem:div-free-proj-R3}, $\|\cP \tilde{B}_\eps-\tilde{B}_\eps\|_{L^2(\R^3;\R^3)}\to 0$,
		which implies that $E_\eps (\tilde{u}_\eps,\tilde{B}_\eps)-E_\eps (\tilde{u}_\eps,\cP\tilde{B}_\eps)\to 0$ by 
		the Lipschitz property \eqref{f3} of $f$ on $\Omega\setminus M_\eps$ and the continuity \eqref{fext1} on its complement. 
		We conclude that
		\begin{align}\label{eq:recovery}
			\begin{aligned}
				&E_\eps (\hat{u}_\eps,\cP\tilde{B}_\eps)\underset{\eps\to 0}{\longrightarrow}E_{hom} (u,B)	~~\text{and}~~\\
				&(\hat{u}_\eps,\cP\tilde{B}_\eps)\underset{\eps\to 0}{\rightharpoonup}(u,B)\quad\text{weakly in $W^{1,2}(\Omega;\R^3)\times L^{2}_{\div}(\Omega;\R^3)$.}
			\end{aligned}
		\end{align}
        The pair $(\hat{u}_\eps,\cP\tilde{B}_\eps)$ thus give the asserted recovery sequence.
	\end{proof}
	
\appendix

\section{Appendix: The energy functional in the large strain case  and alternative models }\label{sec:AppA}

	The energy functional \eqref{Eeps} results from the linearization of the actual energy in a regime of small displacement, small strain, and magnetic field. However, the general magnetoelastic background dictating its properties is better understood in the original, nonlinear regime, which is why we will discuss this here briefly. Even for what is discussed in this appendix, additional assumptions on the elastic part of the energy would be needed to guarantee sufficient invertibility of deformation map and control on the inverse $1/J$ of its Jacobian determinant $J$ to justify the formal change of variables we use to switch between Lagrangian and Eulerian formulations. 
	For simplicity, in this section we discuss the case of a homogeneous body, which is easily extended to an inhomogeneous setting later.

 There are several equivalent formulation for the equilibrium of a magnetoelastic body. These formulations differ for the choice of the intependent variable that describes the magnetization state: the magnetization, the magnetic field, and the magnetic induction. In the present paper, we choose the variational formulation based on the magnetic induction. As discussed in \cite{BuDoOg2008}, the equilibrium configuration of a magnetoelastic body is obtained by minimizing the following functional:
	\begin{equation}
		\begin{aligned}
			E(y,B)=&\int_{\Omega}\Phi(F,B)+\frac1 {2\mu_0}J^{-1}{F}B\cdot FB-\frac1 {2\mu_0} F^\top(b_{\rm a}\circ y)\cdot B\,dx\\
			&+\int_{\mathbb R^3\setminus\Omega}	\frac1 {2\mu_0}J_{\rm a}^{-1}{F_{\rm a}}B\cdot F_{\rm a}B
			-\frac1 {2\mu_0} F_{\rm a}^\top(b_{\rm a}\circ y_{\rm a})\cdot B\,dx,
		\end{aligned}
	\end{equation}
	In the above functional, $\Phi:\R^{3\times 3}\times \R^3\to \R$ (material energy density),
	$b_{\rm a}\in L^2(\R^3;\R^3)$ (applied field) and $\mu_0>0$ (magnetic permeability constant of vacuum) are given, 
	$y:\Omega\to\mathbb R^3$ is the deformation
	\begin{equation}
		F=\nabla y~~~~\text{and}~~~J=\det F=\det \nabla y
	\end{equation}
	are the deformation gradient and the associated Jacobian determinant, and $B$
	is the Lagrangian magnetic field. 
	The function $y_{\rm a}$ is a suitable extension of $y$ from $\Omega$ to $\mathbb R^3$; given a boundary condition $y=y_D$ on $\partial\Omega$ with fixed, globally invertible $y_D$, we can choose $y_{\rm a}=y_D$ independently of $y$.
	Minimization is carried out  over the admissible set
	\begin{equation}
		\mathscr Y=\{(y,B)\in \mathcal Y \times L^2(\R^3;\R^3):y=y_D\text{ on }\partial\Omega,\operatorname{div}B=0\}.
	\end{equation}
	where $\mathcal Y=W^{1,p}(\Omega;\R^3)$ is a suitable Sobolev space.
\begin{remark}
The differential constraint on $B$ could be removed by expressing $B$ as $B=\operatorname{curl}A$. 
Then $\operatorname{div} B=0$ automatically, and given any divergence-free $B\in L^2$,
a suitable vector potential $A\in L^2$ always exists.
Replacing $B$ with $\operatorname{curl} A$, the set of admissible states becomes	
	\begin{equation}
		\tilde{\mathscr Y}=\{(y,A)\in \mathcal Y\times L^2(\R^3;\R^3):y=y_D\text{ on }\partial\Omega
	\end{equation}
However, using $\tilde{\mathscr Y}$ as defined introduces a high degree of nonuniqueness
in the functional, since $B$ does not uniquely determine $A$. This could be remedied 
by again adding a differential constraint, the gauge condition $\operatorname{div}A=0$. 
\end{remark}

Unlike in the small strain regime considered before, the difference between deformed and reference configuration is now relevant for the magnetic induction (the error the difference introduces is of higher order, though, when formally passing to a linearized model).

	The actual, or Eulerian, magnetic induction is given by
	\begin{equation} \label{link_bB}
		b=(J^{-1}FB) \circ \overline y^{-1}
	\end{equation} 
	where $\overline y=y$ in $\Omega$ and $\overline y=y_a$ in $\mathbb R^3\setminus\Omega$. The Eulerian applied magnetic field $b_a$, whose associated energy is, by definition
	\begin{equation*}
		\begin{aligned}
			-\frac{1}{\mu_0}\int_{\mathbb R^3}b_{\rm a}\cdot b dx
			&=-\frac{1}{\mu_0}\int_{\mathbb R^3}J(b_{\rm a}\circ \overline y)\cdot(J^{-1}FB) dx
			=-\frac{1}{\mu_0}\int_{\mathbb R^3}(b_{\rm a}\circ \overline y)\cdot FBdx\\
			&=
			-\frac{1}{\mu_0}\int_{\mathbb R^3}(b_{\rm a}\circ \overline y)\cdot FB dx
			=-\frac{1}{\mu_0}\int_{\mathbb R^3}F^\top(b_{\rm a}\circ \overline y)\cdot Bdx.
		\end{aligned}
	\end{equation*}	
	In this model, 
	\begin{equation}\label{magnetization}
		M=-\partial_B \Phi(F,B),
	\end{equation} 
	is the Lagrangian magnetization. Its correspondent Eulerian quantity is
	\begin{equation*}
		m=F^{-T}M.	
	\end{equation*}
	It follows that the energy $\Phi$ must be independent on $B$ for a non-magnetic material.
The Lagrangian and Eulerian magnetic fields are then defined as
\begin{equation}\label{link_BHM}
	H=\frac 1 {\mu_0 J} F^{\top}F B -M,\qquad  h=\frac b{\mu_0}-m= (F^{-\top}   H) \circ{\overline y}^{-1}.	 	
\end{equation}
 We remark that with these definition of $B$ and $H$ in terms of $y$, $b$ and $h$, $\operatorname{curl} H=0$ if and only if 
 $\operatorname{curl} h=0$, and similarly, $\operatorname{div} B=0$ if and only if $\operatorname{div} b=0$
 as a consequence of \eqref{link_bB} (assuming that $y$ is smoothly invertible).

The corresponding Eulerian formulation formally can be obtained by changing variables $\xi=y(x)$, which yields the functional $E_{E}(y,b)=E(y,B)$ with
	\begin{equation}\label{E(b)}
		\begin{aligned}
			E_{E}(y,b)=\int_{y(\Omega)}\Phi(F\circ y^{-1},b)+\frac1 {2\mu_0}(b-b_a)\cdot b\,d\xi
			&+\int_{\mathbb R^3\setminus y(\Omega)}	\frac1 {2\mu_0} (b-b_a)\cdot b\,d\xi.
		\end{aligned}
	\end{equation}
	with the admissible set
		\begin{equation}
		\mathscr Y_{E}=\{(y,b)\in \mathcal Y \times L^2(\R^3;\R^3):y=y_D\text{ on }\partial\Omega,\operatorname{div}b=0\}.
	\end{equation}
 
\begin{remark}\label{rem:from-MH-to-B} 
A maybe more classical formulation of magnetostatics uses the Eulerian fields $m$ and $h$
instead of $b$;
in that case $b$ is implicitly tracked as $b:=\mu_0(m+h)$ with the magnetic permeability $\mu_0>0$ of vacuum. This formulation is based on an energy functional of the form
\begin{equation*}
		\begin{aligned}
			 \hat{E}_E(y,m,h)=&\int_{y(\Omega)}\hat\Phi(F\circ y^{-1},m)\,d\xi+
   \int_{\mathbb R^3}\frac{\mu_0}{2}|h|^2 \,d\xi 
            -\int_{y(\Omega)} m \cdot b_a \,d\xi
		\end{aligned}
\end{equation*}
subject to the constraints\footnote{$\chi_S$ denotes the 
characteristic function of a set $S$: $\chi_S(x)=1$ if $x\in S$, 
and $\chi_S(x)=0$ otherwise.} 
\begin{equation}\label{mh-constraints}
\begin{aligned}
    \text{$\operatorname{div} (h+\chi_{y(\Omega)}m)=0$, $\operatorname{curl} h=0$,} 
\end{aligned}
\end{equation} 
i.e., the magnetostatic Maxwell equations with the magnetization $m$ effectively only present on the body occupying $y(\Omega)$. Notice that \eqref{mh-constraints} implies that $h=\nabla \varphi$
with the unique solution $\varphi$ of $\Delta \varphi=-\operatorname{div} \chi_{y(\Omega)}m$ on $\R^3$. Hence, $h$ is actually fully determined as a (nonlocal) function of $m$.

The link between $\hat{E}_E$ and $E_{E}$ is not entirely trivial, for details see \cite{BuDoOg2008}. In particular, it is \emph{not true} that $\hat{E}_E(y,m,h)=E_{E}(b)$ for arbitrary $b=\mu_0(h+m)$.
In fact, starting from $\hat{E}_E$, we can only obtain $E_E$ in the form given in \eqref{E(b)} (up to a constant) with $\Phi$ independent of $b_a$ if the externally applied field satisfies $\operatorname{div}b_a=0$ (a natural requirement, though), and then, the state $b$ of 
$E_{E}$ satisfies $b=b_a+\mu_0(h+m)$ at any critical points of $\hat{E}_E$ or $E_{E}$ -- it is the net magnetic induction, with the applied field $-b_a$ removed from the full $\tilde{b}=\mu_0(h+m)$. Moreover, the pointwise transformation between $b$ and $(m,h)$ is given by 
\[
	b+\chi_{y(\Omega)} b_a
	=\frac{\partial}{\partial m}\hat\Phi(F\circ y^{-1},m)+\mu_0 m+\mu_0\chi_{\R^3\setminus y(\Omega)}h
\]
(which also entails \eqref{magnetization}). It, too, can only be expected to hold at critical points: among other things, at other points it in general does not even respect the constraint $\operatorname{div}b=0$ imposed in $E_{E}$. 
Given a density $\hat\Phi$ in $\hat{E}_E$ which is convex in $m$, the transformation allows us to compute the associated density 
in $E_{E}$ as
\begin{align}\label{hatPhi-to-Phi}
	\Phi(\bar F,\bar b)= -\Big(\hat\Phi(\bar F,\cdot)+\frac{\mu_0}{2}|\cdot|^2\Big)^*\big(\bar b\big),~~~\bar b\in \R^3,~\bar F\in \R^{3\times 3}.
\end{align}
Here, $(\cdot)^*$ denotes the Legendre-Fenchel conjugate, i.e., for any function $\R^3\ni \bar m\mapsto g(\bar m)\in \R$,
$g^*(\bar b)=\sup_{\bar m\in \R^3} \bar b\cdot \bar m - g(\bar m)$, $\bar b\in \R^3$. 
Conversely, given a $\Phi$ in $E_E$ which is concave in $b$, 
the magnetization at equilibrium can be recovered as
\begin{align}\label{b-to-m}
	m=-\frac{\partial}{\partial b}\Phi(F,b)
\end{align}
and we can compute $\hat\Phi$ as
\begin{align}\label{Phi-to-hatPhi}
	\hat\Phi(\bar F,\bar m)= \big(-\Phi(\bar F,\cdot)\big)^*(\bar m)-\frac{\mu_0}{2}|\bar m|^2,~~~\bar m\in \R^3,~\bar F\in \R^{3\times 3}.
\end{align}
\end{remark}

 \begin{remark}
	Since $H=\frac 1{\mu_0 J}F^TFB+\partial_B\Phi(F,B)$, if we introduce the potential 
	\begin{equation}
		\Psi(F,B)=\Phi(F,B)+\frac{1}{2 \mu_0} J^{-1} F B \cdot F B,
	\end{equation}
	then we have
	\begin{equation}\label{H-from-B}
		H=\partial_B\Psi(F,B).	
	\end{equation}
	Based on this observation, it would be possible to formulate a dual problem using $H$ instead of $B$ as the state variable, by introducing the Fenchel conjugate $\Psi^*(F,H):=(\Psi(F,\cdot))^*(H)$ of $\Psi$ in its second variable.
 
	However, 
 the stationary solution of the resulting functional would be a saddle point and not a minimum, which is not suited for the application of $\Gamma$-convergence.
 \end{remark}

\section{Appendix: Some properties of the Legendre-Fenchel conjugate}

In general, we are not able to compute Legendre-Fenchel transforms explicitly, but they naturally appear 
in when we make passage from a model in the magnetic state $(m,h)$ to one only depending on $b$, see in particular Example~\ref{ex:mel3}. 
The relevant Legendre-Fenchel transform is carried out in $m$ or $b$, while
the deformation $y$ acts as a fixed parameter. Lacking an explicit expression, it is helpful to at least understand how growth, coercivity and regularity properties are transferred, to be sure that our homogenization results still apply. 
While it possible that the result below is known, we were not able to find anything treating the dependence on parameters to the extent we need here. 

\begin{lemma}[Global Lipschitz properties of the Fenchel transform in presence of parameters] \label{lem:Fenchel-Lipschitz}
Let $\Theta:\R^k\times \R^d\to \R$, $(G,M)\mapsto \Theta(G,M)$, a locally Lipschitz continuous (and thus a.e.~differentiable) function, and consider the Fenchel transform of $\Theta$ in $M$ with parameter $G$, i.e.,
\[
	\Theta^*(G,B):=\sup_{M\in \R^d} (B\cdot M-\Theta(G,M)),~~~B\in \R^d.
\]
\begin{enumerate}
\item[(i)] Suppose that
\begin{align}  
	c\big(|G|^2+|M|^2)-C\leq \Theta(G,M) &\leq C\big(|G|^2+|M|^{p}+1\big)\label{Theta-gc} 
\end{align}
with constants $p\geq 2$ and $C,c>0$ independent of $(G,M)$.
Then 
\begin{align}  
	 c_1|M|^{p'}-C|G|^2-C\leq 
	\Theta^*(G,B) &\leq C_1|M|^{2}-c|G|^2+C\label{Fenchel-gc},
\end{align}
where $C_1,c_1>0$ are constants only depending on $c$, $C$ and $p$, and $\frac{1}{p'}+\frac{1}{p}=1$.
Moreover,
for all $G$, $\Theta^*$
is locally Lipschitz in $B$ and
\begin{align} \label{Fenchel-2Lip-B}
	|D_B\Theta^*(G,B)|\leq L^*(|G|+|B|+1)
\end{align}
with a constant $L^*\geq 0$ independent of $G$ and $B$. 
\item[(ii)] 
Suppose that for a.e.~$(G,M)\in \R^k\times \R^d$, 
\begin{align}  
  |D_G\Theta(G,M)| &\leq L(|G|+|M|^{q}+1), \label{Theta-Lip1}~~\text{and} \\
	|D_M\Theta(G,M)| &\geq c |M|^{p-1}-C\big(|G|+|G||M|^{q-1}+1\big), \label{Theta-gro2}
\end{align}
with constants $p\geq 2$, $1\leq q \leq p-1$ and $L_1,c,C>0$ independent of $(G,M)$.
Then for all $B$, $\Theta^*$
is locally Lipschitz in $G$ and
\begin{align} \label{Fenchel-2Lip-G}
	|D_G\Theta^*(G,B)|\leq L^*(|G|^{\frac{q}{p-q}}+|B|+1)
\end{align}
with a constant $L^*\geq 0$ independent of $G$ and $B$.
\end{enumerate}
\end{lemma}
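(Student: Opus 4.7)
The growth and coercivity bounds \eqref{Fenchel-gc} would follow directly from the definition of $\Theta^*$ combined with \eqref{Theta-gc}. For the upper bound, inserting $\Theta(G,M)\geq c(|G|^2+|M|^2)-C$ into $\Theta^*(G,B)=\sup_M(B\cdot M-\Theta(G,M))$ and computing the quadratic supremum in $M$ yields $\Theta^*(G,B)\leq\tfrac{1}{4c}|B|^2-c|G|^2+C$. For the lower bound, inserting the growth $\Theta(G,M)\leq C(|G|^2+|M|^p+1)$ and carrying out the standard Fenchel--Young computation $\sup_M(B\cdot M-C|M|^p)=c_1|B|^{p'}$ delivers $\Theta^*(G,B)\geq c_1|B|^{p'}-C|G|^2-C$.

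For the Lipschitz property in $B$, the key observation is that $\Theta^*(G,\cdot)$ is convex as a supremum of affine functions, hence locally Lipschitz, and its subdifferential at $B$ is exactly the set of maximizers $M^*=M^*(G,B)$. It therefore suffices to bound $|M^*|$. Starting from $\Theta^*(G,B)=B\cdot M^*-\Theta(G,M^*)$ and the trivial comparison $\Theta^*(G,B)\geq -\Theta(G,0)\geq -C|G|^2-C$, I obtain $B\cdot M^*\geq c|M^*|^2-2C|G|^2-2C$. Combining with $B\cdot M^*\leq|B||M^*|\leq\tfrac{c}{2}|M^*|^2+\tfrac{1}{2c}|B|^2$ via Young's inequality and absorbing yields $|M^*|\leq L^*(|G|+|B|+1)$, which is \eqref{Fenchel-2Lip-B} in the a.e.\ sense guaranteed by Rademacher's theorem.

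\textbf{Plan for part (ii).} The approach is a parametric envelope argument. For $G,G'\in\R^k$, selecting the maximizer $M^*=M^*(G,B)$ at $G$ and using it as a test point in the supremum defining $\Theta^*(G',B)$, I get
$$\Theta^*(G',B)-\Theta^*(G,B)\geq \Theta(G,M^*)-\Theta(G',M^*),$$
and symmetrically with $M^*(G',B)$. Integrating \eqref{Theta-Lip1} along the segment from $G$ to $G'$ gives $|\Theta(G,M)-\Theta(G',M)|\leq L(|G|+|G'|+|M|^q+1)|G-G'|$, so everything reduces to bounding $|M^*|^q$ in terms of $|G|$ and $|B|$. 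For this, the first-order condition $B\in\partial_M\Theta(G,M^*)$ combined with \eqref{Theta-gro2} gives $|B|\geq c|M^*|^{p-1}-C(|G|+|G||M^*|^{q-1}+1)$. I will apply Young's inequality to the cross term $|G||M^*|^{q-1}$ with conjugate exponents $\tfrac{p-1}{p-q}$ and $\tfrac{p-1}{q-1}$ (well-defined since $1\leq q\leq p-1$), obtaining $C|G||M^*|^{q-1}\leq\varepsilon|M^*|^{p-1}+C_\varepsilon|G|^{(p-1)/(p-q)}$. Absorbing $\varepsilon|M^*|^{p-1}$ and using $|G|\leq|G|^{(p-1)/(p-q)}+1$ (valid because $(p-1)/(p-q)\geq 1$) gives $|M^*|^{p-1}\leq C(|B|+|G|^{(p-1)/(p-q)}+1)$. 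Raising to the power $q/(p-1)\leq 1$ and simplifying $|B|^{q/(p-1)}\leq|B|+1$ yields $|M^*|^q\leq C(|B|+|G|^{q/(p-q)}+1)$, which combined with \eqref{Theta-Lip1} delivers \eqref{Fenchel-2Lip-G}.

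\textbf{Main obstacle.} The most delicate step is orchestrating the Young-inequality exponents in part (ii) so that the mixed term $|G||M^*|^{q-1}$ can be partially absorbed into $|M^*|^{p-1}$ without destroying the final target exponent $q/(p-q)$ on $|G|$; this works only because of the precise interplay between the growth exponent $p$ on $|M^*|^{p-1}$, the loss exponent $q-1$ on the mixed term, and the ranges $q\geq 1$ and $q\leq p-1$. A secondary technical point is that $\Theta^*$ is a priori only convex (hence locally Lipschitz) in $B$ and Lipschitz (not smooth) in $G$, so the derivatives $D_B\Theta^*$ and $D_G\Theta^*$ must be understood in the a.e.\ Rademacher sense; the pointwise bounds on maximizers and the envelope estimate then translate directly into pointwise bounds on the gradients where they exist.
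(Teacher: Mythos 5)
Your proposal is correct and follows essentially the same strategy as the paper: the growth bounds in (i) come from monotonicity and the known conjugates of $|\cdot|^2$ and $|\cdot|^p$, the Lipschitz bound in $B$ comes from bounding the maximizing $M^*$ (equivalently the subgradient of the convex function $\Theta^*(G,\cdot)$) using coercivity, and the Lipschitz bound in $G$ comes from the envelope (Danskin-type) inequality together with a bound on $|M^*|$ extracted from the first-order condition and \eqref{Theta-gro2}. Your execution differs only cosmetically—you use the maximizer directly and a single Young-inequality split of the cross term $|G||M^*|^{q-1}$, whereas the paper argues via tangent planes in (i) and an explicit nested set construction $K(G,B)\subset\hat K(G_0,B)$ with a three-case analysis in (ii)—but this does not change the substance.
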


\begin{remark}\label{rem:Fenchel}
The protoype example for $\Theta$ in Lemma~\ref{lem:Fenchel-Lipschitz} is the nonconvex function
\[
	\Theta(G,M)=|M|^{p}- |M|^{q}|G|+|G|^2.
\]
Of course, in higher dimension $k,d>1$ more complicated expressions with the same homogeneity properties are also admissible, in particular the function $\Theta(G,M):=\hat{\Psi}_G(M)$ of 
Example~\ref{ex:mel3} with $q=\frac{p}{2}$. The case $q=\frac{p}{2}$ is special because 
this is the biggest possible value of $q$ for which our example is still compatible with assumption \eqref{Theta-gc} in (i), and
in (ii), we then have that $\frac{q}{p-q}=1$ in \eqref{Fenchel-2Lip-G}.
\end{remark}
\begin{proof}[Proof of Lemma~\ref{lem:Fenchel-Lipschitz}]
Recall that locally Lipschitz continuous functions are a.e.~differentiable, and any locally uniform bound on their derivatives (where they exist) implies a corresponding Lipschitz property with the same constant. We will only show \eqref{Fenchel-2Lip-B} and \eqref{Fenchel-2Lip-G} assuming that $D\Theta^*$ exists a.e.;
using finite differences instead of derivatives, essentially the same argument can also be used to prove that $\Theta^*$ is locally Lipschitz to justify the use of derivatives. For general information on Fenchel tranforms and some of its basic properties used below we refer to \cite{Ro70B}.

\noindent{\bf (i) Growth, coercivity and Lipschitz properties in $B$.} 
This is essentially well known. 
For each $G$, 
we can apply the Fenchel transform in $M$ to all three expressions compared in the inequality \eqref{Theta-gc}, which reverses the order of the inequalities and changes the sign of additive constants (i.e., independent of $M$) in the terms.
Moreover, up to multiplicative positive constants, $(|\cdot|^2)^*=|\cdot|^2$ and $(|\cdot|^p)^*=|\cdot|^{p'}$. We thus immediately get \eqref{Fenchel-gc}.
For the proof of \eqref{Fenchel-2Lip-B}, recall that $\Theta^*(G,\cdot)$ is also convex for every $G$. In addition, \eqref{Fenchel-gc} implies the quadratic growth condition 
\begin{align}  \label{Fenchel-g}
	|\Theta^*(G,B)| &\leq C_1|B|^{2}+C|G|^2+C
\end{align}
Finally, we recall that any convex function with quadratic growth as in \eqref{Fenchel-g}
automatically satisfies a $2$-Lipschitz property.
We provide a short proof of this for the convenience of the reader, also to emphasize that the growth in $G$ here is indeed as asserted in \eqref{Fenchel-2Lip-B}.
First notice that all tangents of $\Theta^*$ 
with slope $M_0\in \partial \Theta^*(G,B_0)$ at some $B_0$
must satisfy
\[
  \Theta^*(G,B_0)+M_0\cdot (B-B_0)\leq C_1|B|^{2}+C|G|^2+C~~~\text{for all $B\in \R^d$,}
\]
since the convexity of $\Theta^*$ would otherwise immediately contradict \eqref{Fenchel-g}.
In addition, we have that $\Theta^*(G,B_0)+\Theta(G,M_0)\geq M_0\cdot B_0$, 
and using \eqref{Fenchel-g} to control $\Theta(G,M_0)$, we further get that 
\[
  M_0\cdot B\leq C_1(|B|^{2}+|B_0|^{2})+2C|G|^2+2C~~~\text{for all $B\in \R^d$.}
\]
Choosing $B=(|G|+|B_0|+1)\frac{M_0}{|M_0|}$,
we see that for a suitable constant $C_2>0$,
\begin{align}\label{Fenchel-2Lip-B-aux}
  |M_0|(|G|+|B_0|+1)\leq C_2 (|B_0|+|G|+1)^2.
\end{align}
Since $M_0$ was an arbitrary element of $\partial \Theta^*(G,B_0)$ and $\Theta^*(G,\cdot)$ is convex, we in particular have that $M_0=D_B\Theta^*(G,B_0)$ whenever the derivative exists. With this, \eqref{Fenchel-2Lip-B-aux} implies \eqref{Fenchel-2Lip-B} with $L^*=C_2$.

\noindent{\bf (ii) The Lipschitz property \eqref{Fenchel-2Lip-G} of $\Theta^*$ in the parameter $G$.}
Our strategy for the proof of \eqref{Fenchel-2Lip-G} is as follows:
We first show that for every $(G,B)$, we can find a bounded set $K=K(G,B)$ of suitably controlled size such that
\begin{align}\label{Fenchel-choiceK}
	\Theta^*(G,B)=\sup_{M\in K(G,B)} (B\cdot M- \Theta(G,M)). 
\end{align}
If we know that $K(G,B)\subset \hat K=\hat K(G_0,B)$ for all $G$ in a small neighborhood of a point $G_0\in \R^k$,
this also allows us to estimate difference quotients of $\Theta^*(G,B)$: For any $G_1$ with $|G_1-G_0|<1$, by the definition of $\Theta^*$ as a supremum 
(which is always a maximum in compact sets by continuity of $\Theta$): We can choose $M_1\in \overline{K(G_0,B)}$ optimal at $G_1$ so that 
$\Theta^*(G_1,B)=B\cdot M_1-\Theta(G_1,M_1)$
while $\Theta^*(G_0,B)\geq B\cdot M_1-\Theta(G_0,M_1)$.
Hence,
\[
	\Theta^*(G_1,B)-\Theta^*(G_0,B)\leq B\cdot M_1-\Theta(G_1,M_1)-B\cdot M_1+\Theta(G_0,M_1)
	=\Theta(G_0,M_1)-\Theta(G_1,M_1).
\]
An analogous choice of $M_0\in \overline{K(G_0,B)}$ optimal for $\Theta^*$ at $G_0$ yields
\[
	\Theta^*(G_1,B)-\Theta^*(G_0,B)\geq B\cdot M_0-\Theta(G_1,M_0)-B\cdot M_0+\Theta(G_0,M_0)
	=\Theta(G_0,M_0)-\Theta(G_1,M_0).
\]
Combined, this immediately implies 
the estimate
\begin{align} \label{Fenchel-2Lip-3}
	|D_G\Theta^*(G_0,B)| 
	\leq \sup_{M\in \hat K(G_0,B)} |D_G \Theta(G_0,M)|,
\end{align}
which will yield \eqref{Fenchel-2Lip-G} using available bounds for $M\in \hat K(G_0,B)$ implicit in the set.
 
Due to \eqref{Theta-gro2}, we have that
\[
\begin{aligned}
	K(G,B):=\big\{M\in \R^d\mid 
	c|M|^{p-1}-C|G|-C|G||M|^{q-1}-C
	\leq |B|\big\} & \\
	\supset 	\big\{M\in \R^d\mid |D_M\Theta(G,M)|\leq |B|\big\}&.
\end{aligned}
\]
Therefore, $K(G,B)$ is dense in
$\{M\in \R^d\mid \exists S\in \partial_M \Theta(G,M):~|S|\leq |B| \}$,
with $\partial_M \Theta(G,M)\subset \R^d$ denoting the convex subdifferential in the variable $M$, and we indeed obtain
\eqref{Fenchel-choiceK}, because the supremum in the definition of $\Theta^*$ is obtained exactly
at a point $M$ with $B\in \partial_M \Theta(G,M)$. Let
\[
\begin{aligned}
	\hat{K}(G_0,B):= & \big\{M\in \R^d\mid c_1|M|^{p-1}-|G_0|-|G_0||M|^{q-1} \leq |B|+1\big\}\\
	&\supset
	\bigcup \big\{ K(G,B) : G\in\R^d,|G-G_0|<1 \big\}
\end{aligned}
\]
for a suitable constant $0<c_1=c_1(p,q,c,C)<1$ sufficiently small. This choice 
of $c_1$ is indeed always possible,
for instance one can take $c_1\leq C^2$ small enough  
such that
$\big(\frac{c}{C}-\sqrt{c_1}\big)|M|^{p-1}-|M|^{q-1}+\frac{1}{\sqrt{c_1}}-1\geq 0$ for all $M$, 
exploiting that $p>q$. 
This choice implies the set inclusion claimed above: for each $G$ with $|G-G_0|<1$ and all $M\in K(G,B)$,
\[
\begin{aligned}
	&c_1|M|^{p-1}-|G_0|-|G_0||M|^{q-1} \leq c_1|M|^{p-1}-|G|+1-(|G|+1)|M|^{q-1} \\
	&\qquad\leq 
\sqrt{c_1}\Big(\frac{c}{C}|M|^{p-1}-\frac{1}{\sqrt{c_1}}(|G|+|G||M|^{q-1})-|M|^{q-1}+\frac{1}{\sqrt{c_1}}-1\Big) \\
&\qquad\leq 
\sqrt{c_1}\Big(\frac{c}{C}|M|^{p-1}-|G|+|G||M|^{q-1}-|M|^{q-1}+\frac{1}{\sqrt{c_1}}\Big)
\leq \frac{\sqrt{c_1}}{C}|B|+1\leq |B|+1.
\end{aligned}
\]
Our concrete choice of $\hat{K}(G_0,B)$ thus gives us \eqref{Fenchel-2Lip-3}, where we now have to estimate the right hand side using \eqref{Theta-Lip1}.
We distinguish three cases. If $|M|<1$, since $q\geq 1$, we trivially have that
\begin{align}	\label{DGth-est0}
	|D_G \Theta(G_0,M)|\leq L(|M|^q+|G_0|+1)\leq L_1(|M|+|G_0|+1).
\end{align}
If $\frac{c_1}{2}|M|^{p-1}\geq 
|G_0||M|^{q-1}$ and $|M|\geq 1$, 
\[
	\frac{c_1}{2}|M|^q\leq \frac{c_1}{2}|M|^{p-1} \leq |B|+|G_0|+1~~~
	\text{for all $M\in \hat{K}(G_0,B)$}.
\]
Hence, \eqref{Theta-Lip1} implies that
\begin{align}	\label{DGth-est1}
	|D_G \Theta(G_0,M)|\leq L(|M|^q+|G_0|+1)\leq L_1\frac{2}{c_1}|B|+L\Big(\frac{2}{c_1}+1\Big)
	(|G_0|+1).
\end{align}
Finally, if $\frac{c_1}{2}|M|^{p-1}< 
|G_0||M|^{p-1}$ and thus $|M|^{p-q}<\frac{2}{c_1}|G_0|$,
we observe that \eqref{Theta-Lip1} gives
\begin{align}	\label{DGth-est2}
	|D_G \Theta(G_0,M)|\leq L(|M|^{q}+|G_0|+1)\leq 
	L\Big(\frac{2}{c_1}|G_0|\Big)^{\frac{q}{p-q}}+L_1(|G_0|+1).
\end{align}
Combining \eqref{DGth-est0}--\eqref{DGth-est2} to cover all cases, we see that
for all $M\in \hat{K}(G_0,B_0)$,
\[
	|D_G \Theta(G_0,M)|\leq L^* (|B_0|+|G_0|^{\frac{q}{p-q}}+1)~~~~\text{with}~L^*:=
	L \max\Big\{1,\Big(\frac{2}{c_1}\Big)^{\frac{q}{p-q}},\Big(\frac{2}{c_1}+1\Big)\Big\},
\]
concluding the proof of \eqref{Fenchel-2Lip-G}.
\end{proof}

\subsection*{Acknowledgements}
The research of SK and MK was supported by the GA \v{C}R grant 23-04766S. Part of this work was carried out during a research visit of SK at the the University of Rome 3 supported by INdAM-GNFM. GT acknowledges support from the Italian Ministry of University and Research through project PRIN 2022NNTZNM DISCOVER, and the “Departments of Excellence” initiative. The  hardware used for typesetting part of the documents was acquired through support from the Rome Technopole Foundation. Support from INdAM-GNFM is also acknowledged.

\bibliographystyle{abbrv}
\bibliography{references}

\end{document}